\documentclass[11pt]{amsart}
\calclayout
\usepackage[utf8]{inputenc}
\usepackage[T1]{fontenc} 
\usepackage{anyfontsize}
\usepackage{mathtools}
\usepackage{amsmath}
\usepackage{amssymb}
\usepackage{amsthm}
\usepackage{amsfonts}
\usepackage{stmaryrd}
\usepackage{upgreek}
\usepackage{enumitem}
\usepackage[dvipsnames]{xcolor}
\usepackage{tikzit}
\usetikzlibrary{decorations.markings}
\usetikzlibrary{arrows.meta, arrows}
\usetikzlibrary{decorations.pathreplacing,angles,quotes}
\usetikzlibrary{patterns}

\tikzstyle{dot}=[fill=black, draw=black, shape=circle, scale=0.5]
\tikzstyle{white-dot}=[fill=white, draw=black, shape=circle, scale=0.5]
\tikzstyle{blue-dot}=[fill=blue, draw=blue, shape=circle, scale=0.5]
\tikzstyle{red-dot}=[fill=red, draw=red, shape=circle, scale=0.5]
\tikzstyle{light-green-rounded-rectangle}=[fill={rgb,255: red,189; green,255; blue,197}, draw=black, shape=rectangle, rounded corners=3pt]
\tikzstyle{light-pink-rounded-rectangle}=[fill={rgb,255: red,255; green,225; blue,255}, draw=black, shape=rectangle, rounded corners=3pt]
\tikzstyle{light-yellow-rounded-rectangle}=[fill={rgb,255: red,255; green,254; blue,215}, draw=black, shape=rectangle, rounded corners=3pt]
\tikzstyle{thick-blue-implies}=[{=>}, draw=blue, thick]

\tikzstyle{dashed-black}=[-, dashed]
\tikzstyle{dotted-gray}=[-, dotted, draw={rgb,255: red,191; green,191; blue,191}]
\tikzstyle{red}=[-, draw=red]
\tikzstyle{thick}=[-, line width=1.5pt]
\tikzstyle{to}=[->]
\tikzstyle{red-to}=[->, draw=red]
\tikzstyle{blue-to}=[draw=blue, ->]
\tikzstyle{dash-to}=[->, dashed]
\tikzstyle{squig-to}=[->, decorate, decoration={{zigzag, segment length=4, amplitude=.9, post=lineto,postlength=2pt}}, line join=round]
\tikzstyle{red-dash-to}=[->, draw=red, dashed]
\tikzstyle{thick-blue-to}=[->, double distance=1pt, line width=1pt, draw=blue]
\tikzstyle{mapsto}=[{|->}]
\tikzstyle{directed}=[-, decoration={markings,mark=at position 0.7 with {\arrow{To[scale=1.25]}}}, postaction=decorate]
\tikzstyle{directed-dash}=[-, dashed, decoration={markings,mark=at position 0.7 with {\arrow{To[scale=1.25]}}}, postaction=decorate]
\tikzstyle{directed-dotted}=[-, dotted, decoration={markings,mark=at position 0.7 with {\arrow{To[scale=1.25]}}}, postaction=decorate]
\tikzstyle{directed-red}=[-, draw=red, decoration={markings,mark=at position 0.7 with {\arrow{To[scale=1.25]}}}, postaction=decorate]
\tikzstyle{directed-red-dash}=[-, draw=red, dashed, decoration={markings,mark=at position 0.7 with {\arrow{To[scale=1.25]}}}, postaction=decorate]
\tikzstyle{directed-blue}=[-, draw=blue, decoration={markings,mark=at position 0.7 with {\arrow{To[scale=1.25]}}}, postaction=decorate]
\tikzstyle{directed-blue-dash}=[-, draw=blue, dashed, decoration={markings,mark=at position 0.7 with {\arrow{To[scale=1.25]}}}, postaction=decorate]
\tikzstyle{light-gray-05-fill}=[-, fill={rgb,255: red,220; green,220; blue,220}, draw=none, fill opacity=0.5]
\tikzstyle{dark-gray-05-fill}=[-, fill={rgb,255: red,128; green,128; blue,128}, fill opacity=0.50, draw=none]
\tikzstyle{solid-green-fill}=[-, fill={rgb,255: red,66; green,192; blue,139}, draw=none]
\tikzstyle{light-yellow-fill-black-border}=[-, draw=black, fill={rgb,255: red,255; green,254; blue,215}]
\tikzstyle{light-yellow-05-fill}=[-, fill={rgb,255: red,255; green,254; blue,215}, draw=none, fill opacity=0.5]
\tikzstyle{light-magenta-05-fill}=[-, fill={rgb,255: red,255; green,225; blue,255}, draw=none, fill opacity=0.5]
\tikzstyle{light-cyan-05-fill}=[-, fill={rgb,255: red,196; green,255; blue,249}, draw=none, fill opacity=0.5]
\tikzstyle{dotted-fill}=[-, pattern=dots, pattern color=gray, draw=none]
\usepackage{quiver}
\usepackage{hyperref}
\usepackage{adjustbox}
\usepackage{graphicx}
\usepackage{textcomp}
\usepackage[normalem]{ulem}
\usepackage[all]{xy}
\usepackage{cancel}
\usepackage{scalerel}
\usepackage[backend=biber, 
style=alphabetic,
minalphanames=4,
maxalphanames=5, 
maxbibnames=5,doi=false,isbn=false,url=false]{biblatex}

\DeclareFontFamily{U}{mathx}{}
\DeclareFontShape{U}{mathx}{m}{n}{<-> mathx10}{}
\DeclareSymbolFont{mathx}{U}{mathx}{m}{n}
\DeclareMathAccent{\widehat}{0}{mathx}{"70}
\DeclareMathAccent{\widecheck}{0}{mathx}{"71}
\DeclareMathAccent{\widebar}{0}{mathx}{"73}

\theoremstyle{plain}
\newtheorem{theorem}{Theorem}[section]
\newtheorem{lemma}[theorem]{Lemma}
\newtheorem{proposition}[theorem]{Proposition}
\newtheorem{corollary}[theorem]{Corollary}

\theoremstyle{definition}

\newtheorem{definition}[theorem]{Definition} 
\newtheorem{example}[theorem]{Example}
\newtheorem{remark}[theorem]{Remark}

\newcommand{\Cat}{\mathsf{C}}
\newcommand{\Set}{\mathsf{Set}}
\newcommand{\Top}{\mathsf{Top}}
\newcommand{\Mfd}{\mathsf{Mfd}}

\newcommand{\Gpd}{\mathsf{Gpd}}

\newcommand{\PSh}{\mathsf{PSh}} 
\DeclareMathOperator{\Maps}{Maps}

\newcommand{\stkout}[1]{\ifmmode\text{\sout{\ensuremath{#1}}}\else\sout{#1}\fi} 

\renewcommand{\epsilon}{\varepsilon}

\newcommand{\into}{\hookrightarrow}

\makeatletter
\newcommand{\newrightleftarrows}[2]{%
  \mathrel{\mathop{%
    \vcenter{\offinterlineskip\m@th
      \ialign{\hfil##\hfil\cr
        \hphantom{$\scriptstyle\mspace{8mu}{#1}\mspace{8mu}$}\cr
        \rightarrowfill\cr
        \vrule height0pt width 2em\cr
        \leftarrowfill\cr
        \hphantom{$\scriptstyle\mspace{8mu}{#2}\mspace{8mu}$}\cr
        \noalign{\kern-0.3ex}
      }%
    }%
  }\limits^{#1}_{#2}}%
}
\makeatother

\DeclareMathOperator{\Kan}{Kan}
\DeclareMathOperator{\Hom}{Hom}
\DeclareMathOperator{\Img}{Im}

\DeclareMathOperator{\id}{id}

\newcommand{\huaW}{\mathcal{W}}

\newcommand{\huaH}{\mathcal{H}}

\newcommand{\Liengpd}{\mathsf{Lie_{n}Gpd}}
\newcommand{\Ana}{\mathsf{Ana}}
\newcommand{\Wbar}{\widebar{W}}
\newcommand{\X}{\mathcal{X}}
\newcommand{\Acyc}{\mathrm{Acyc}}
\newcommand{\hormor}{{\dashrightarrow}_h}
\newcommand{\vermor}{{\dashrightarrow}_v}
\newcommand{\hortilde}[1]{{\widetilde{#1}^h}}
\newcommand{\vertilde}[1]{{\widetilde{#1}^v}}
\newcommand{\vJoin}{\mathbin{\rotatebox[origin=c]{90}{$\Join$}}} 
\renewcommand{\Maps}{\mathrm{Maps}}

\title{Morphisms of double Lie groupoids: a simplicial approach}

\author{Daniel Álvarez}
\address{Instituto de Matem\'{a}tica Pura e Aplicada, Estrada Dona Castorina 110, Rio de Janeiro 22460-320, Brazil}
\email{uerbum@impa.br}

\author{Kalin Krishna}
\address{Mathematics Institute\\Georg-August-University of G\"ottingen\\Bunsenstra{ss}e 3-5\\G\"ottingen 37073\\Germany}
\email{kkrishn@mathematik.uni-goettingen.de}

\author{Stefano Ronchi}
\address{Mathematics Institute\\Georg-August-University of G\"ottingen\\Bunsenstra{ss}e 3-5\\G\"ottingen 37073\\Germany}
\email{stefano.ronchi@mathematik.uni-goettingen.de}

\date{\today}

\begin{document}

\begin{abstract}
Motivated by recent developments in generalized K\"ahler geometry, we study morphisms of double Lie groupoids from a simplicial viewpoint. We show that the codiagonal functor $\Wbar$ extends from objects to horizontal and vertical principal bibundles and to square-shaped $(1,1)$-morphisms. The latter determine 2-dimensional morphisms of square and globular shape between anafunctors of Lie 2-groupoids. We illustrate these constructions through two applications:

First, we revisit the equivalence between two models of nonabelian gerbes: groupoid bundle gerbes and principal 2-bundles. We show that a principal 2-bundle and its associated groupoid bundle gerbe determine equivalent anafunctors from a manifold to the corresponding structure Lie 2-group.

Second, we refine the integration of Manin triples in transitive Courant algebroids. We show that integrations associated with different choices of suitably transverse Manin triples in the same Courant algebroid are related by symplectic Morita equivalences. Consequently, the resulting integration of the background Courant algebroid is well defined up to symplectic Morita equivalence.
\end{abstract}

\maketitle
\tableofcontents
\section{Introduction}
In differential geometry, singular spaces such as the leaf spaces of foliations or orbit spaces of Lie group actions may be described using smooth objects called Lie groupoids. When a Lie group action preserves a foliation, it induces an action on the leaf space; taking the quotient of the leaf space by this action combines these two sources of singularity. Double Lie groupoids serve to describe such iterated or double quotients: they consist of a space of objects, horizontal and vertical arrows, and squares whose boundary edges are these arrows, with compatible horizontal and vertical composition, see Fig.~\ref{fig:double_groupoid}. 

\begin{figure}[!h]
    \centering
    \includegraphics[width=0.25\linewidth]{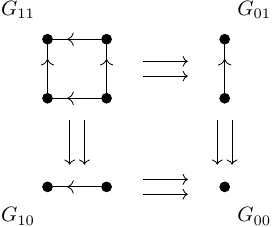}
    \caption{Pictorial representation of the elements of a double groupoid $G$.}
    \label{fig:double_groupoid}
\end{figure}

Besides their role in modeling such spaces, double (Lie) groupoids play an important role in algebraic topology \cite{BrownHigginsSivera2011}, higher gauge theory \cite{MartinsPicken2011,BaezSchreiber2007}, and Lie theory \cite{Weinstein1988,LuWeinstein1989,macdou}; more recently, they have become essential for describing generalized K\"ahler classes and their corresponding metrics \cite{AGJ2024}. The square pattern for double Lie groupoids reappears at the level of morphisms. In the double bicategory constructed in \cite{AGJ2024}, horizontal and vertical principal bibundles are called $(1,0)$- and $(0,1)$-morphisms, respectively, while the square-shaped cells are called $(1,1)$-morphisms. 

The constructions of \cite{AGJ2024} raise two natural questions: how the symplectic $(1,1)$-morphisms arising in generalized K\"ahler geometry relate to the more general problem of integrating Manin triples, and how the horizontal, vertical, and square-shaped morphisms introduced there fit into the existing theory of higher principal bundles and nonabelian gerbes. In this paper we answer both questions. For the first, we show that, under suitable hypotheses, different choices of Manin triples within the same transitive Courant algebroid determine symplectic double Lie groupoids related by a symplectic $(1,1)$-morphism. This extends the comparison underlying the generalized K\"ahler construction and shows that the resulting integrations of the Courant algebroid are symplectically Morita equivalent. For the second, we show that a principal 2-bundle and its associated groupoid bundle gerbe are related by a $(1,1)$-morphism, giving an explicit geometric comparison between these two models of nonabelian gerbes. The common mechanism behind these results is an extension of the codiagonal functor $\Wbar$, introduced in \cite{ArtinMazur1966}, to generalized morphisms of double Lie groupoids.

Under a suitable smooth filling condition, the codiagonal functor takes a double Lie groupoid to a simplicial manifold known as a Lie 2-groupoid \cite{MehtaTang2011}. In this work, we limit our attention to those double Lie groupoids which satisfy such a condition and we call them full. With a complete description of higher morphisms for double groupoids now available, we can continue the work initiated in \cite{MehtaTang2011}. We show that $\Wbar$ extends to a morphism from the double bicategory of full double Lie groupoids and principal bibundles to a suitable double bicategory of Lie 2-groupoids and anafunctors. The correspondence between horizontal and vertical morphisms of double Lie groupoids and anafunctors of Lie 2-groupoids determined by $\Wbar$ reveals another instance of the subtle interplay between the vertical and horizontal structures of a bisimplicial manifold. Just as $\Wbar$ maps a double Lie groupoid to a Lie 2-groupoid only when the double Lie groupoid is full, a morphism of double Lie groupoids that is levelwise a hypercover in the vertical or horizontal groupoid direction remains a hypercover under $\Wbar$ only if an additional filling condition is satisfied. We refer to this condition as fullness for vertical or horizontal hypercovers. In the case considered here, this filling condition is automatically satisfied for the morphisms of double Lie groupoids determined by principal actions of full double Lie groupoids on multiplicative bibundles. In fact, fullness also enters the picture when showing that a $(1,1)$-morphism gives rise to a double anafunctor, by applying the biaction groupoid construction.

Lie 2-groups provide natural examples of double Lie groupoids, serving as the structure groups for categorified principal bundles --- also known as nonabelian gerbes --- in higher gauge theory \cite{BaezSchreiber2007,MartinsPicken2011,GinotStienon2015}. While it is known that various definitions of nonabelian gerbes are related by higher categorical equivalences \cite{NikolausWaldorf2013}, making these relationships more explicit is desirable in order to obtain a clearer picture of the theory. Here, we show that a principal 2-bundle and its associated groupoid bundle gerbe are explicitly related by a $(1,1)$-morphism. Consequently, under the $\Wbar$ functor, they yield equivalent anafunctors (or generalized morphisms) into the corresponding structure Lie 2-group. This explicit correspondence paves the way for a unified treatment of connective structures on these equivalent objects.

Symplectic double Lie groupoids for Manin triples inside transitive Courant algebroids were constructed in \cite{Alvarez2023} under natural integrability assumptions. Applying $\Wbar$ to these symplectic double Lie groupoids yields 2-shifted symplectic Lie 2-groupoids that integrate the background Courant algebroid \cite{Alvarez2023,CuecaZhu2023,MehtaTang2011}. We show that different choices of suitably transverse Manin triples within the same Courant algebroid canonically generate a symplectic $(1,1)$-morphism of symplectic double Lie groupoids. Consequently, applying $\Wbar$ yields symplectic Morita equivalences between the corresponding integrations of the background Courant algebroid. In hindsight, this explains the natural appearance of $(1,1)$-morphisms in generalized K\"ahler geometry, where the relevant Courant algebroids are exact, and clarifies how deforming a Manin triple by a pair of suitable Maurer-Cartan elements reflects at the global level in the integration of the corresponding Courant algebroid by a 2-shifted symplectic 2-groupoid.

{\bf Interpretation and outlook.} As a comment on our results in the context of localization, let us mention that the
construction in Theorem~\ref{thm:11MorphToSquareAna-Wbar} retains the square shape of $(1,1)$-morphisms; on the other hand, we show in Theorem~\ref{thm:1-1-morph-to-2-hom-diamond} that the
same data determines a diagram of the form:
\[
\begin{tikzcd}[cramped,sep=small]
K_{\bullet,\bullet}
  & D_{\bullet}
  & H_{\bullet,\bullet}
  &&& \Wbar\vertilde{C}\times_{\Wbar H}\Wbar\hortilde{D}
  & {}
  \\
B_{\bullet}
  & \Omega
  & C_{\bullet}
  && \Wbar G
  & \Wbar\widetilde{\widetilde{\Omega}}
  & \Wbar K
  \\
J_{\bullet,\bullet}
  & A_{\bullet}
  & G_{\bullet,\bullet}
  &&& \Wbar\hortilde{A}\times_{\Wbar J}\Wbar\vertilde{B}
  & {}
  \\
\arrow[from=1-2, to=1-1]
\arrow[two heads, from=1-2, to=1-3]
\arrow["\sim"', two heads, from=1-6, to=2-5]
\arrow[from=1-6, to=2-7]
\arrow[from=2-1, to=1-1]
\arrow[two heads, from=2-1, to=3-1]
\arrow[from=2-2, to=1-2]
\arrow[from=2-2, to=2-1]
\arrow[two heads, from=2-2, to=2-3]
\arrow[two heads, from=2-2, to=3-2]
\arrow[from=2-3, to=1-3]
\arrow[dashed, from=2-3, to=2-5]
\arrow[two heads, from=2-3, to=3-3]
\arrow["\sim"', from=2-6, to=1-6]
\arrow["\sim"', two heads, from=2-6, to=2-5]
\arrow["\sim", from=2-6, to=3-6]
\arrow[from=3-2, to=3-1]
\arrow[two heads, from=3-2, to=3-3]
\arrow["\sim", two heads, from=3-6, to=2-5]
\arrow[from=3-6, to=2-7]
\end{tikzcd}
\]

This globular 2-morphism can be interpreted in different ways, a comprehensive comparison of which we leave to future work:
\begin{itemize}
    \item  A representative of a 2-morphism in the bicategory of fractions with respect to weak equivalences in the sense of Pronk \cite{Pronk1996, Tommasini2016, PronkScull2022}. This is a bicategorical localization of the bicategory of Lie 2-groupoids with mapping spaces the fundamental groupoids $\Pi_1\Maps(X,Y)$. 
    \item A span of morphisms of anafunctors in the  category $\mathsf{Span_{HC^{-1}}}(X,Y)$ with objects anafunctors between X and Y and morphisms commutative diagrams of the form: 
    \begin{equation}\label{diag:stricter-morphism-of-spans}
    \begin{tikzcd}[ampersand replacement=\&,cramped,sep=tiny]
	\&\& Z \&\& \\
	X \&\&\&\& Y \\
	\&\& Z'
	\arrow["\sim"', two heads, from=1-3, to=2-1]
	\arrow[from=1-3, to=2-5]
	\arrow["\sim", from=1-3, to=3-3]
	\arrow["\sim", two heads, from=3-3, to=2-1]
	\arrow[from=3-3, to=2-5]
    \end{tikzcd}
    \end{equation}
    The nerve of this category of spans is shown to compute (up to weak homotopy equivalence) the mapping spaces of the Dwyer-Kan localization of the simplicial category of Lie 2-groupoids in \cite[\S 2.2, \S 7.2.1]{RogersZhu2020}.
\end{itemize}

Our choice of examples is guided by the desire to provide a global definition of the Courant sigma model \cite{Roytenberg2007}. In standard gauge theory, formulating global Chern--Simons theory requires moving beyond local connection 1-forms to study a global moduli space of principal bundles for a chosen Lie group \cite{Freed1995}. Similarly, promoting the Courant sigma model to a global topological field theory requires two components: an integration of the background Courant algebroid serving as the global target space and a definition of the categorified gauge fields mapping into it. Our two applications are closely related to these two components: the first concerns the configuration space of fields, described by principal 2-bundles and groupoid bundle gerbes, for a Lie 2-group target, while the second controls the global target geometry through integrations of a Courant algebroid.

\paragraph{\bf Structure of the paper.}
In \S\ref{sec:simp stuff}, we review the standard theory of higher Lie groupoids, Morita equivalences and generalized morphisms in the context of localization. We specifically review localizations by bibundles and by anafunctors and how to pass between these in the case of Lie groupoids. Finally we construct a double bicategory whose horizontal and vertical morphisms are anafunctors and whose square-shaped cells are double anafunctors. In \S\ref{sec:double stuff}, we review the double bicategory of double Lie groupoids and principal bibundles based on the definitions introduced in \cite{AGJ2024}, and show how to convert this into a double bicategory of anafunctors for (full) double Lie groupoids.
In \S\ref{sec:double to simplicial}, we establish the compatibility of $\Wbar$ with the different higher categories of anafunctors. The main results are Theorems~\ref{thm:11MorphToSquareAna-Wbar} and~\ref{thm:1-1-morph-to-2-hom-diamond}, which describe the two constructions obtained by applying $\Wbar$ to $(1,1)$-morphisms of double Lie groupoids.

These results are applied in \S\ref{sec:nonab gerbes}, where we show that a principal 2-bundle for a Lie 2-group and its associated groupoid bundle gerbe as in \cite{NikolausWaldorf2013} form two sides of a $(1,1)$-morphism from a manifold to the structure Lie 2-group. Applying $\Wbar$ to this $(1,1)$-morphism produces a pair of equivalent anafunctors.

Finally, in \S\ref{sec:int manin}, we show that four suitably transverse integrable Dirac structures in a transitive Courant algebroid determine a symplectic $(1,1)$-morphism relating the corresponding symplectic double Lie groupoids, following \cite{AGJ2024,Alvarez2023}. Consequently, $\Wbar$ sends this diagram to a diagram relating the 2-shifted symplectic Lie 2-groupoids that integrate the background Courant algebroid.

{\bf Acknowledgments.} We thank C. Angulo, M. del Hoyo, E. Getzler, M. Gualtieri, Y. Jiang, E. Meinrenken, C. Zhu and all the other members of the Higher Structures Seminar for many insightful suggestions and inspiring discussions that made this work possible. 
We also thank the developers of \href{https://tikzit.github.io}{TikZiT} and \href{https://q.uiver.app}{Quiver}, which made it possible for us to have fun complementing this article with plenty of diagrams without the help of generative AI. K. K. gratefully acknowledges that this work was made possible with the support of a scholarship from the German Academic Exchange Service (DAAD).
S. R. is supported by Deutsche Forschungsgemeinschaft (DFG) project number 543037407.

\section{Higher Lie groupoids and 2-categories of anafunctors}\label{sec:simp stuff}

In this article we focus on the category of (real) smooth manifolds $\Mfd$ with the Grothendieck singleton pretopology of surjective submersions. 

\subsection{Simplicial manifolds and Lie \texorpdfstring{$n$}{n}-groupoids}

We begin by recalling the fundamental notions of higher Lie groupoids and hypercovers. Our references for this are mainly \cite{Henriques2008,Getzler2009,Zhu2009,BehrendGetzler2017,RogersZhu2020,delHoyoOrtizStudzinski2024}.

A \textbf{simplicial manifold $X$} is a contravariant functor from $\Delta$, the category of finite ordinals $[0]=\{0\}, [1]=\{0, 1\}, \dots, [n]=\{0, 1,\dotsc, n\}, \dots$ with order-preserving maps, to $\Mfd$, the category of smooth manifolds and smooth maps. 
The morphisms in the category $\Delta$ are all generated by two sets of maps which satisfy the so-called cosimplicial identities: the \textbf{coface} maps $\delta^n_i:[n-1] \to [n]$, which are the unique inclusions skipping $i$, for $0\le i \le n$ and the \textbf{codegeneracy} maps $\sigma^n_i: [n+1] \to [n]$, which are the unique surjections identifying $i$ and $i+1$. 

Therefore, a simplicial manifold $X$ consists of a series of manifolds $X_n$, \textbf{face} maps $d^n_i: X_n \to X_{n-1}$ and \textbf{degeneracy} maps $s^n_i: X_n \to X_{n+1}$, for $i=0, \dots, n$ satisfying the following simplicial identities
\begin{equation}\label{eq:face-degen}
    \begin{array}{lll}
        d^{n-1}_i d^{n}_j = & d^{n-1}_{j-1} d^n_i &\text{if}\; i<j,  \\
       s^{n}_i s^{n-1}_j =& s^{n}_{j+1} s^{n-1}_i & \text{if}\; i\leq j,
    \end{array}\qquad  d^n_i s^{n-1}_j =\left\{\begin{array}{ll}
    s^{n-2}_{j-1} d^{n-1}_i  & \text{if}\; i<j, \\
    \id  & \text{if}\; i=j, j+1,\\
    s^{n-2}_j d^{n-1}_{i-1} & \text{if}\; i> j+1.
 \end{array}\right.
\end{equation}
From now on, we drop the upper indices and write instead $d_i$ and $s_i$ for simplicity.  A \textbf{simplicial map $f \in \Hom(X,Y)$} is a natural transformation, or in other words, a family of smooth maps  $f_n:X_n\to Y_n$ that intertwine the face and degeneracy maps on $X$ and~$Y$.

Simplicial sets, i.e. functors $\Delta^{op} \to \Set$, can be seen as discrete simplicial manifolds. The \textbf{standard $n$-simplex} $\Delta^n := \Delta(\_, [n])$ is an example.

An \textbf{$n$-simplex} in $X$ is a point $x\in X_n$. Alternatively, by the Yoneda lemma, this can be seen as a simplicial map $x: \Delta^n \to X$. Since the combinatorics of the simplicial structure of $X$ encode the geometry of the simplex, it is helpful to think of a map $K \to X$ for $K$ a simplicial set, as a \textbf{simplicial diagram} of shape $K$ in $X$. The \textbf{space of simplicial diagrams of shape $K$} in $X$ is denoted by $\hom(K, X)$. Ideally we would like to consider this as a smooth manifold, but this is not always possible, due to the nature of $\hom(K, X)$ as the limit of the diagram
\begin{equation}\label{eq:simplicial-diagram-limit}
    \hom(K, X) \to \prod\limits_{k\ge 0} \Mfd(K_k, X_k) \rightrightarrows \prod\limits_{{\begin{array}{c} {\scriptstyle [m],[n] \in \Delta} \\ {\scriptstyle f \in \Delta([m],[n])}\end{array}}} \Mfd(K_n, X_m).
\end{equation}
Indeed, this limit may not always exist in $\Mfd$. However, this limit always exists, when considering the diagram in a complete category in which $\Mfd$ is embedded by a functor that preserves and reflects limits. The usual choices are: $\PSh(\Mfd)$, the category of presheaves over $\Mfd$, in which $\Mfd$ is embedded via the Yoneda embedding; or $\Top$, the category of topological spaces, in which $\Mfd$ is embedded by the forgetful functor. In the first case, when $\hom(K, X)$ is the Yoneda embedding of a manifold, we say it is \textbf{representable}, while in the second one, when $\hom(K,X)$ is a smooth manifold in the right topology, we simply say it is \textbf{smooth}. These are equivalent, so in the following we make the choice to say $\hom(K, X)$ is \textbf{smooth}, when it is a smooth manifold. 
The first approach was used for example in \cite{Henriques2008, Zhu2009, Li2014, RogersZhu2020, Ronchi2025-thesis}, while the second one appears in \cite{delHoyoOrtizStudzinski2024}.

There are two choices of $K$ that are fundamental in the following: the \textbf{$m$-boundary} $\partial\Delta^m$ and the \textbf{$(m,j)$-horn} $\Lambda^m_j$. The boundary $\partial\Delta^m$ is the simplicial subset obtained from the $m$-simplex $\Delta^m$ by removing the interior, i.e. the unique non-degenerate $m$-simplex $01\dots (m-1)m \in \Delta^m_m$ (and all of its degeneracies). The horn $\Lambda^m_j$ is obtained from the $m$-boundary by additionally removing the $j$-th face $01\dots (j-1)(j+1) \dots m \in \Delta^m_{m-1}$ (and all of its degeneracies). Therefore
\begin{equation*}
    \Lambda^m_j \subset \partial\Delta^m \subset \Delta^m. 
\end{equation*}
More precisely, 
\begin{equation*}
\begin{split}
    (\partial\Delta^m)_l &=  \{ f\in (\Delta^m)_l \mid \Img f \nsupseteq \{0,\dots, m\} \} \subseteq \Delta^m_l,\\
    (\Lambda^m_j)_l &= \{ f \in (\Delta^m)_l \mid \Img f \nsupseteq \{0,\dots,\widehat{j},\dots,m\}\} \subseteq (\partial\Delta^m)_l \subseteq \Delta^m_l.
\end{split}
\end{equation*}
In particular $\partial\Delta^0 = \Lambda^0_0 = \emptyset$. 
The spaces of horn and boundary shaped simplicial diagrams are the \textbf{$(m,j)$-horn space} $\Lambda^m_j(X) := \hom(\Lambda^m_j, X)$ and the \textbf{$m$-boundary} space (or \textbf{matching space}) $\partial\Delta^m(X) := \hom(\partial\Delta^m, X)$. To give some intuition, $\partial\Delta^m(X)$ is the space of all possible boundaries of $m$-simplices in $X$. In the same way, $\Lambda^m_j(X)$ is the space of all possible configurations of $(m-1)$-simplices in the shape of an $(m,j)$-horn that exist in $X$. Some of these are pictured in Figure \ref{fig:horns}.

\begin{figure}[!ht]
\centering
\begin{adjustbox}{width=\textwidth}
\pgfdeclarelayer{nodelayer}
\pgfdeclarelayer{edgelayer}
\pgfsetlayers{nodelayer,main,edgelayer}
\begin{tikzpicture}[scale=0.6,
    none/.style={}, 
    dot/.style={fill=black, draw=black, shape=circle, scale=0.40}, 
    fill-grey/.style={-, fill={rgb,255: red,220; green,220; blue,220}, draw=none, fill opacity=0.5},
    directed/.style={decoration={markings,mark=at position 0.7 with {\arrow{To[scale=1.25]}}}, postaction={decorate}},
    dashdir/.style={dashed, decoration={markings,mark=at position 0.7 with {\arrow{To[scale=1.25]}}}, postaction={decorate}},
    ]
    \begin{pgfonlayer}{nodelayer}
        \node [style=none] (25) at (20.5, 1) {};
        \node [style=none] (26) at (19.5, 3) {};
        \node [style=none] (27) at (21.5, 3) {};
        \node [style=none] (0) at (0, -1) {$\Lambda^1_0$};
        \node [style=none] (1) at (2.5, -1) {$\Lambda^1_1$};
        \node [style=none] (2) at (6, -1) {$\Lambda^2_0$};
        \node [style=none] (3) at (10.5, -1) {$\Lambda^2_1$};
        \node [style=none] (4) at (15, -1) {$\Lambda^2_2$};
        \node [style=none] (5) at (19.5, -1) {$\Lambda^3_0$};
        \node [style=none] (6) at (24, -1) {$\Lambda^3_1$};
        \node [style=dot, label={below:\small{0}}] (7) at (0, 1) {};
        \node [style=dot, label={below:\small{1}}] (8) at (2.5, 1) {};
        \node [style=dot, label={below:\small{0}}] (9) at (5, 1) {};
        \node [style=dot, label={below:\small{1}}] (10) at (7, 1) {};
        \node [style=dot, label={above:\small{2}}] (11) at (6, 3) {};
        \node [style=dot, label={below:\small{0}}] (12) at (9.5, 1) {};
        \node [style=dot, label={below:\small{1}}] (13) at (11.5, 1) {};
        \node [style=dot, label={above:\small{2}}] (14) at (10.5, 3) {};
        \node [style=dot, label={below:\small{0}}] (15) at (14, 1) {};
        \node [style=dot, label={below:\small{1}}] (16) at (16, 1) {};
        \node [style=dot, label={above:\small{2}}] (17) at (15, 3) {};
        \node [style=dot, label={below:\small{0}}] (18) at (18.5, 1) {};
        \node [style=dot, label={below:\small{1}}] (19) at (20.5, 1) {};
        \node [style=dot, label={above:\small{2}}] (20) at (19.5, 3) {};
        \node [style=dot, label={above:\small{3}}] (21) at (21.5, 3) {};
        \node [style=none] (22) at (23, 1) {};
        \node [style=none] (23) at (24, 3) {};
        \node [style=none] (24) at (26, 3) {};
        \node [style=dot, label={below:\small{0}}] (28) at (23, 1) {};
        \node [style=dot, label={below:\small{1}}] (29) at (25, 1) {};
        \node [style=dot, label={above:\small{2}}] (30) at (24, 3) {};
        \node [style=dot, label={above:\small{3}}] (31) at (26, 3) {};
    \end{pgfonlayer}
    \begin{pgfonlayer}{edgelayer}
        \draw [style=dotted-fill] (27.center)
                to (25.center)
                to (26.center)
                to cycle;
        \draw [style=directed] (10) to (9);
        \draw [style=directed] (11) to (9);
        \draw [style=directed] (13) to (12);
        \draw [style=directed] (14) to (13);
        \draw [style=directed] (17) to (15);
        \draw [style=directed] (17) to (16);
        \draw [style=directed] (20) to (18);
        \draw [style=directed] (19) to (18);
        \draw [style=directed] (21) to (20);
        \draw [style=directed] (21) to (19);
        \draw [style=dashdir] (20) to (19);
        \draw [style=dotted-fill] (23.center)
                to (24.center)
                to (22.center)
                to cycle;
        \draw [style=directed] (30) to (28);
        \draw [style=directed] (29) to (28);
        \draw [style=directed] (31) to (30);
        \draw [style=directed] (31) to (29);
        \draw [style=dashdir] (30) to (29);
        \draw [style=directed] (31) to (28);
        \draw [style=directed] (21) to (18);
    \end{pgfonlayer}
\end{tikzpicture}
\end{adjustbox}
\caption{A few low-dimensional horns. The dotted faces denote boundaries of triangles without interior.}
\label{fig:horns}
\end{figure}

\begin{definition}
    A \textbf{higher Lie groupoid} is a simplicial manifold $X$ such that the horn spaces $\Lambda^m_j(X)$ are smooth and the horn projections 
    \begin{equation*}
        p^m_j: X_m \to \Lambda^m_j(X)
    \end{equation*}
    induced by the inclusion $\Lambda^m_j \into \Delta^m$ are surjective submersions for all $m \ge 0$, $0 \le j \le m$. If, additionally, $p^m_j$ are diffeomorphisms for all $m > n$, $0 \le j \le m$, we say $X$ is a \textbf{Lie $n$-groupoid}. A \textbf{morphism of higher Lie groupoids} is a simplicial map. We denote the category of Lie $n$-groupoids by $\Liengpd$. Its hom-sets are then $\mathsf{SMfd}(X, Y)$. 
\end{definition}

We usually think of these conditions as horn filling conditions, i.e. for any diagram on the left of Figure \ref{fig:horn-fillers}, there is a diagram on the right. This is convenient in pictorial proofs. Of course, in the smooth case, one needs to make sure that the right-to-left map is actually a surjective submersion.

\begin{figure}[!h]
    \centering
    \includegraphics[width=0.6\linewidth]{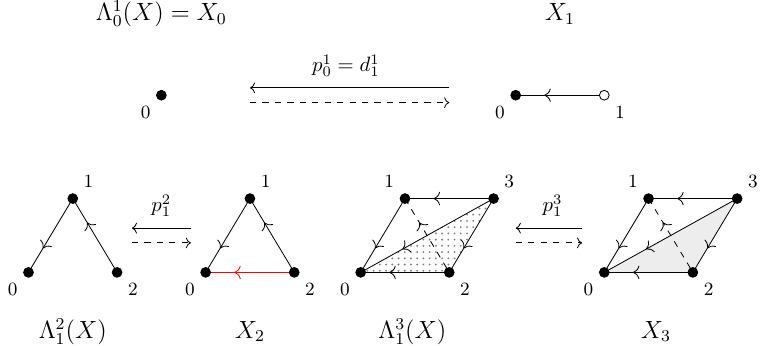}
    \caption{Some horn filling conditions.}
    \label{fig:horn-fillers}
\end{figure}

\begin{example}
    A Lie groupoid in the usual sense can be seen as a Lie 1-groupoid by identifying it with its nerve. We implicitly perform this identification all throughout this article. In particular, a smooth manifold can be seen as a Lie 0-groupoid, by identifying it with the nerve of its identity groupoid. For a detailed description of a Lie 2-groupoid as an object with a set of ternary multiplications on it see \cite[Prop.-Def. 2.16]{Zhu2009}. For this perspective in the case of general Lie $n$-groupoids see \cite[Prop. 1.2.67]{Ronchi2025-thesis}.
\end{example}

The category of Lie $n$-groupoids has the natural structure of a simplicial category, that is, a category in which mapping spaces $\Maps(X, Y)$ are simplicial sets and the structure maps of the category are compatible with this structure (see e.g. \cite[\href{https://kerodon.net/tag/00JQ}{Tag 00JQ}]{kerodon}). We recall the main points of this notion and refer for example to \cite[\S 1.1.2]{Ronchi2025-thesis} and references therein for more details on the definition of mapping space and its interpretation. Our goal here is to recall the bicategorical structures of the full subcategories of Lie 1-groupoids and Lie 2-groupoids, respectively.\footnote{Importantly, the subcategory of Lie 2-groupoids would naturally be a tricategory, so we perform a truncation in this case.} These turn out to be (2,1)-categories, i.e. bicategories in which all 2-morphisms are invertible.

\begin{definition}
    Let $K$ be a levelwise finite simplicial set, and $X$ be a simplicial manifold. We denote by $X \otimes K$ the simplicial manifold defined at each level by 
    \begin{equation*}
        (X \otimes K)_m := \coprod_{r\in K_m} X_m,
    \end{equation*}
    This is known as the \textbf{copowering}. As a simplicial set, it is simply $X \times K$.
    The \textbf{mapping space} between two simplicial manifolds $X$ and $Y$ is the simplicial set defined at each level by 
    \begin{equation*}
        \Maps(X, Y)_m = \mathsf{SMfd}(X \otimes \Delta^m, Y).
    \end{equation*}
\end{definition}

The fact that $\Maps(X,Y)$ between Lie $n$-groupoids has a natural structure of a simplicial set can be interpreted as the fact that there are higher dimensional morphisms between morphisms. In fact, simplicial maps $X \otimes \Delta^1\to Y$ are homotopies between morphisms $X\to Y$ (the 0-simplices), and simplicial maps $X \otimes \Delta^n \to Y$ are higher homotopies between homotopies. It is important to note that even if $X$ is a Lie $n$-groupoid, $X \otimes \Delta^n$ is generally just a simplicial manifold. 

In particular, for Lie 1-groupoids, a straightforward computation shows that 1-homotopies between simplicial maps are the same as natural transformations between functors, when seeing the Lie 1-groupoids as internal categories in $\Mfd$. These turn out to be equivalent to the data of a map $X_0 \to Y_1$ satisfying the naturality conditions. For Lie 2-groupoids, a similar interpretation is possible, as it was proven in \cite[\S 7]{Duskin2001} that they are nerves of bigroupoids, see e.g. \cite[Prop. 3.3]{delHoyoStefani2017}, \cite[Prop. 4.4]{BullejosFaroBlanco2005}. 

By extending a classical result in the theory of simplicial sets (\cite[Thm. 6.9]{May1967}), as in \cite[Lemma 3.18]{RonchiZhu2026}, we have that the mapping space $\Maps(X,Y)$ between Lie groupoids is a groupoid. Therefore, we naturally obtain a (2,1)-category of Lie groupoids, as all the 2-morphisms are arrows of their respective hom-groupoids, and thus invertible. This can also be seen by noticing that when 2-morphisms are interpreted as natural transformations, they take values in $Y_1$, so they are invertible pointwise. We denote the (2,1)-category of Lie groupoids, functors and natural transformations by $\mathsf{LieGpd}$. 

For the same reason, Lie 2-groupoids naturally form a (3,1)-category, because when $X$ and $Y$ are Lie 2-groupoids, $\Maps(X,Y)$ is a 2-groupoid. To obtain a (2,1)-category, we truncate each mapping space by taking its fundamental groupoid $\Pi_1\Maps(X,Y)$. This forces us to effectively consider 2-morphisms as natural transformations up to \textit{modifications}, i.e. the 3-morphisms in this category. By abuse of notation, we denote the (2,1)-category of Lie 2-groupoids, simplicial maps (2-functors) and 1-homotopies (natural transformations) up to 2-homotopies (modifications) by $\mathsf{Lie_2Gpd}$. Occasionally, we will refer to $\Liengpd$ for general $n$ as a bicategory. By this we mean the bicategory obtained by taking the hom-spaces $\Pi_1\Maps$.

\subsection{Hypercovers and Morita equivalences}
We recall the following notion of generalized isomorphism between higher Lie groupoids, which serves to express when two of them model the same higher differentiable stack, as we discuss below. 
 
\begin{definition}
    Let $\partial: X_k \to \partial\Delta^k(X)$ be the canonical boundary projection $\partial = (d_0, \dots, d_k)$.
    A morphism $f: X \to Y$ between Lie $n$-groupoids is a \textbf{hypercover} if the conditions $\Acyc(k)$ hold for all $0\le k < n$ and $\Acyc!(n)$ holds. These conditions are:
    \begin{itemize}
        \item $\Acyc(k)$: The space $\partial\Delta^k(X)\times_{f, \partial \Delta^k(Y), \partial} Y_{k}$ is smooth, and the map $$(\partial, f): X_k \to \partial \Delta^k(X)\times_{f, \partial \Delta^k(Y), \partial} Y_{k}$$is a surjective submersion.
        \item $\Acyc!(n)$: The space $\partial\Delta^n(X)\times_{f, \partial \Delta^n(Y), \partial} Y_{n}$ is smooth, and the map $$(\partial, f): X_n \to \partial \Delta^n(X)\times_{f, \partial \Delta^n(Y), \partial} Y_{n}$$ is a diffeomorphism.
    \end{itemize}
    We denote hypercovers by the notation $\overset{\sim}{\twoheadrightarrow}$. We picture the spaces $\partial \Delta^k(X)\times_{\partial \Delta^k(Y)} Y_{k}$ for low $k$ in Figure \ref{fig:acyclic-conditions-target-spaces}. 
\end{definition}

While the space $\partial\Delta^k(X)\times_{\partial \Delta^k(Y)} Y_{k}$ may generally not be a manifold because it is obtained as a limit similar to \eqref{eq:simplicial-diagram-limit}, it can be shown recursively that it is, provided $f$ satisfies $\Acyc(k-1)$, see e.g. \cite[Lemma 2.4]{Zhu2009}. Additionally, if a map $f: X\to Y$ between Lie $n$-groupoids satisfies the conditions in our definition of hypercover, then it satisfies the condition $\Acyc!(k)$ for all $k>n$, as well. Particularly, there is no distinction between hypercovers of simplicial manifolds and hypercovers of Lie $n$-groupoids. See e.g. \cite[Lemma 3.9]{Zhu2009a}. 

\begin{figure}[!h]
    \centering
    \includegraphics[width=0.70\linewidth]{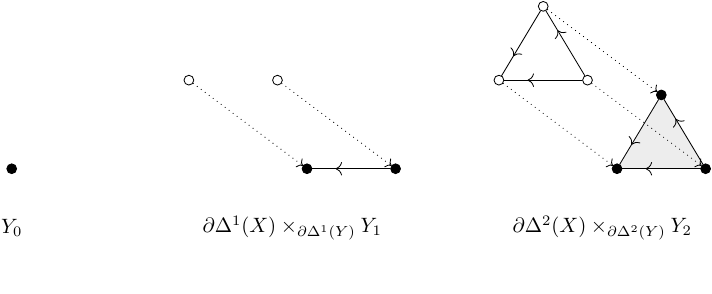}
    \caption{The target spaces appearing in the conditions $\Acyc(k)$ and $\Acyc!(k)$ for $k=0,1,2$. The dotted arrows represent $f$: they connect a point in $X$ to its image in $Y$. The shaded triangle is a 2-simplex with interior, as opposed to a boundary.}
    \label{fig:acyclic-conditions-target-spaces}
\end{figure}

In the incomplete category of fibrant objects of higher Lie groupoids, as defined in \cite{RogersZhu2020}, hypercovers are at the same time weak equivalences and also Kan fibrations, and they are precisely the acyclic fibrations \cite[Prop. 6.7]{RogersZhu2020}, hence the arrow notation. Weak equivalences in this setting are characterized by conditions similar to (but generally weaker than) those defining hypercovers, as in \cite[Prop. 7.16]{RogersZhu2020}. Additionally any weak equivalence factors as a span of hypercovers due to the factorization Lemma \cite[Prop. 2.5]{RogersZhu2020}, so we define the following equivalence relation. 

\begin{definition}
    Two Lie $n$-groupoids $X$ and $Y$ are \textbf{Morita equivalent} if there exists a third Lie $n$-groupoid $Z$ and a span of hypercovers
    \begin{equation*}
        X \overset{\sim}{\twoheadleftarrow} Z \overset{\sim}{\twoheadrightarrow} Y. 
    \end{equation*}
    This generates an equivalence relation, which coincides with the one generated by spans of weak equivalences. 
\end{definition}

\begin{example}
    Weak (i.e. Morita) equivalences of Lie groupoids are ``smooth'' categorical equivalences \cite{MoerdijkMrcun2005, delHoyo2013}. A hypercover of Lie 1-groupoids is in particular a categorical equivalence, as $\Acyc(0)$ implies surjectivity on objects, which is stronger than essential surjectivity, while $\Acyc!(1)$ is the condition of full faithfulness. 
\end{example}

\begin{remark}\label{rem:hypercover-properties}
    Hypercovers satisfy the following properties (see e.g. \cite[Thm. 2.12]{Wolfson2016}, \cite[\S 2]{Zhu2009}):
    \begin{itemize}
        \item[(H1)] Isomorphisms are hypercovers. 
        \item[(H2)] Hypercovers are closed under composition: the composition of two hypercovers is a hypercover. 
        \item[(H3)] Hypercovers are closed under pullback (also called base change): If $X \overset{\sim}{\twoheadrightarrow} Y \leftarrow Z$ is a diagram of Lie $n$-groupoids in which the map $X \overset{\sim}{\twoheadrightarrow} Y$ is a hypercover, then the pullback $X\times_Y Z$ exists, is a Lie $n$-groupoid  and the map $X\times_Y Z \overset{\sim}{\twoheadrightarrow} Z$ is a hypercover. 
    \[\begin{tikzcd}[ampersand replacement=\&,cramped]
	{X\times_Y Z} \& Z \\
	X \& Y
	\arrow["\sim", dashed, two heads, from=1-1, to=1-2]
	\arrow[from=1-1, to=2-1]
	\arrow["\lrcorner"{anchor=center, pos=0.125}, draw=none, from=1-1, to=2-2]
	\arrow[from=1-2, to=2-2]
	\arrow["\sim", two heads, from=2-1, to=2-2]
    \end{tikzcd}\]
    \end{itemize}
\end{remark}

\subsection{Localization} 

It is a well-known fact in the theory of Lie groupoids that Morita equivalence classes of Lie groupoids correspond one-to-one with isomorphism classes of differentiable stacks. To make this precise as a correspondence of bicategories, one needs to extend the bicategory of Lie groupoids to a bicategory where Morita equivalences are invertible. This procedure is known as localization (see e.g. \cite[\href{https://kerodon.net/tag/01M4}{Tag 01M4}]{kerodon}).

\begin{definition}
    Let $\Cat$ be a category and $\huaW$ a class of morphisms. A category $\mathsf{D}$ with a functor $L:\Cat \to \mathsf{D}$ is a \textbf{localization of $\Cat$ at $\huaW$} if $L$ maps morphisms in $\huaW$ to isomorphisms in $\mathsf{D}$ and $(\mathsf{D}, L)$ is universal with this property: for any other functor $L': \Cat \to \mathsf{D'}$ that sends $\huaW$ to isomorphisms, $L'$ factors uniquely as $F \circ L$ for a unique functor $F: \mathsf{D} \to \mathsf{D'}$. 

    A 1-morphism $f:X \to Y$ in a bicategory $\Cat$ is an \textbf{equivalence} if there exist a 1-morphisms $g$ and invertible 2-morphisms $f \circ g \implies id_Y$, $g \circ f \implies id_X$. 
    
    If $\Cat$ is a bicategory, a \textbf{localization of $\Cat$ at a class of 1-morphisms $\huaW$} is a bicategory $\mathsf{D}$ equipped with a 2-functor\footnote{I.e. a morphism of bicategories \cite{Benabou1967}. These are sometimes referred to as pseudofunctors.} $L:\Cat \to \mathsf{D}$ that maps 1-morphisms in $\huaW$ to \textit{equivalences} and $L$ is universal with this property. The same process can also be defined for simplicial categories and more general $\infty$-categories.
\end{definition}

Clearly, because localizations are defined by a universal property, they are not unique, but any two localizations are equivalent as categories or bicategories. In fact, localizations of Lie groupoids that make Morita equivalences invertible can be constructed in several different ways:
\begin{enumerate}
    \item by left/right-principal bibundles, which are also known as Hilsum-Skandalis (HS) morphisms,
    \item by calculus of fractions with respect to weak equivalences $\huaW$ \cite{Pronk1996, Tommasini2016, PronkScull2022},
    \item by anafunctors \cite{Makkai1996, Bartels2006, Roberts2012}. 
\end{enumerate}
These different approaches and partial comparisons between them can be found for example in the references \cite{MoerdijkMrcun2005, Blohmann2008, Zhu2009, delHoyo2013, Roberts2012, MeyerZhu2015, Tommasini2016, FarsiScullWatts2024}. For the correspondence with differentiable stacks in particular, see \cite{Pronk1996, BehrendXu2011, Blohmann2008, delHoyo2013}. Generally, localizations encounter size and choice issues: localizations of locally small categories may not be locally small, and some procedures may require choices of pullbacks. For these more abstract issues we refer to \cite{Roberts2016, Tommasini2016, PronkScull2022}, and references therein. In the present work, we mainly use approaches 1 and 3 for Lie groupoids, a good comparison of which can be found in \cite{MeyerZhu2015}, and approach 3 for Lie 2-groupoids, though we make no claim of whether this is indeed a localization of the bicategory $\mathsf{Lie_2Gpd}$ (\cite[Thm 7.2]{Roberts2012} only works for bicategories of internal categories such as Lie 1-groupoids). We also recall the definition of morphisms of anafunctors of Lie 2-groupoids in the more general bicategory obtained by approach 2, as these appear in Theorem \ref{thm:1-1-morph-to-2-hom-diamond}.

\begin{remark}\label{rmk:higher-HSbibundles}
Hilsum-Skandalis (HS) morphisms were also constructed for Lie 2-groupoids in \cite{Li2014}, but we do not need them in the present discussion. A way to extend HS morphisms of Lie groupoids to the case of higher Lie groupoids and to construct generalized morphisms of higher Lie groupoids modeled on principal $n$-bibundles is the subject of the upcoming work \cite{BlohmannKrishnaZhu:2026}. Analogously to the Lie groupoid case, it is shown there that biprincipal bibundles, spans of weak equivalences, and spans of hypercovers all provide equivalent ways to define Morita equivalences of  Lie $n$-groupoids. 
\end{remark}

\begin{remark}
    In \cite{GinotStienon2015}, the authors show that hypercovers form a left multiplicative system for the category of strict Lie 2-groupoids. Hence, they produce a localization of this category by hypercovers. To localize the \textit{bicategory} of Lie 2-groupoids at the class of hypercovers $\huaH$ and obtain a bicategory, one would additionally need to check BF1, BF4, and BF5 from \cite{Pronk1996}. Note that BF1 does not literally hold, but it can be replaced by the standard weakening of BF1 requiring only identity morphisms to belong to the class $\huaH$ (see \cite{Tommasini2016}, \cite[\S~2]{PronkScull2022}). On the other hand, BF4 can be shown to hold for hypercovers, but the condition BF5 is that $\huaH$ is closed under 2-isomorphisms, which is not generally true. Consider for example the pair groupoid $P(M):= M\times M \rightrightarrows M$ of a manifold $M$ with more than one point. The identity $id_{P(M)}$ is a hypercover. Consider now the map $c_p: P(M) \to P(M)$ induced by the constant map $M \to \ast \to M$, that sends each point of $M$ to the same point $p\in M$. There is a 2-isomorphism $\alpha: id_{P(M)} \implies c_p$ given by the map 
    \begin{equation*}
        M \longrightarrow M \times M, \qquad
        x \longmapsto (x,p) : x \to p,
    \end{equation*}
    but $c_p$ is clearly not a hypercover, though it is a weak equivalence. Nevertheless, one can extend the class of hypercovers by adding those weak equivalences that are 2-isomorphic to the hypercovers, thus forming the completed class $\widebar{\huaH}$. This class then admits a calculus of fractions. Either way, the localization at $\widebar{\huaH}$ ends up being equivalent to the localization at weak equivalences by the factorization lemma. 
\end{remark}

\begin{remark}
    Because of the equivalence of localizations of the category of Lie groupoids at Morita equivalences and the category of differentiable stacks, localizations of the category of Lie $n$-groupoids at Morita equivalences can be thought of as models for the category of differentiable $n$-stacks. More general localizations of simplicial subcategories of Lie $n$-groupoids are discussed in \cite[\S 2.2,  \S 7.2.1]{RogersZhu2020}. See also \cite{BehrendGetzler2017} and references therein.
\end{remark}

\begin{definition}\label{def:HS-bibun}
A \textbf{Hilsum-Skandalis morphism} (or \textbf{HS bibundle}) of Lie groupoids $G \overset{A}{\dashrightarrow} H$ is a left-principal bibundle
\[\begin{tikzcd}[ampersand replacement=\&,cramped, row sep=small, column sep=scriptsize]
	{H_1} \& A \& {G_1} \\
	{H_0} \&\& {G_0,}
	\arrow["\curvearrowright"{description}, draw=none, from=1-1, to=1-2]
	\arrow[shift left, from=1-1, to=2-1]
	\arrow[shift right, from=1-1, to=2-1]
	\arrow["l", from=1-2, to=2-1]
	\arrow["r"', two heads, from=1-2, to=2-3]
	\arrow["\curvearrowleft"{description}, draw=none, from=1-3, to=1-2]
	\arrow[shift left, from=1-3, to=2-3]
	\arrow[shift right, from=1-3, to=2-3]
\end{tikzcd}\]
that is, a smooth manifold $A$ with a moment map $r: A \to G_0$ on which $G$ acts on the right and a moment map $l: A \to H_0$ on which $H$ acts \textit{principally} on the left. In particular, $r$ is a surjective submersion. A HS bibundle which is also right principal is a Morita equivalence. 
\end{definition}

The composition of two HS morphisms $G \overset{A}{\dashrightarrow} H \overset{B}{\dashrightarrow} K$ is given by the $K$-$G$-bibundle $(A \times_{H_0} B)/H$, where the quotient is taken with respect to the diagonal $H$-action $h\cdot(a,b) \mapsto (h \cdot a, b \cdot h^{-1})$. \textbf{2-morphisms} of HS bibundles are biequivariant maps. Any such map is in fact an isomorphism, since equivariant maps of principal groupoid bundles are diffeomorphisms. With this data, we obtain a (2,1)-category denoted $\mathsf{Bun(LieGpd)}$. Note, in fact, that well-definedness of composition, associativity, and unitality of the trivial bibundle all hold up to natural invertible 2-morphisms. See \cite[Thm. 7.15]{MeyerZhu2015}. It can be readily shown that the equivalences in this bicategory are precisely the biprincipal bibundles, i.e. the Morita equivalences. A proof that $\mathsf{Bun(LieGpd)}$ is a localization can be found in \cite[Thm. 2.11]{MoerdijkMrcun2005}.

The anafunctor approach produces a smaller but equivalent localization to the calculus of fractions method. In fact, we have that an anafunctor can be seen as a particular kind of 1-morphism in the localization by calculus of fractions at weak equivalences. While 2-morphisms in the latter are defined by equivalence classes, 2-morphisms of anafunctors can also be defined as particular representatives of these. 
Since anafunctors can be defined in general for Lie $n$-groupoids of any order $n$, we write the following definitions in general, but we will mostly be thinking of the bicategories $\mathsf{LieGpd}$ and $\mathsf{Lie_2Gpd}$. We denote the anafunctor bicategory by $\mathsf{Ana(Lie_n Gpd)}$, and the calculus of fractions one by $\mathsf{Lie_n Gpd}[\huaW^{-1}]$. We will describe the 1- and 2-morphisms of $\mathsf{Ana(Lie_n Gpd)}$ and only the 2-morphisms of $\mathsf{Lie_n Gpd}[\huaW^{-1}]$. 

\begin{definition}
    An \textbf{anafunctor} of Lie $n$-groupoids $X \dashrightarrow Y$ consists of a Lie $n$-groupoid $Z$ and a span $X \overset{\sim}{\twoheadleftarrow} Z \to Y$ where the \textit{domain-side leg} is a hypercover. By definition, a Morita equivalence is then an anafunctor where both legs are hypercovers.  
\end{definition}

Because of properties (H2) and (H3) in Remark \ref{rem:hypercover-properties}, anafunctors can be composed by the usual pullback (fiber product) of simplicial manifolds as follows: If $X \overset{\sim}{\twoheadleftarrow} Z \to Y$ and $Y \overset{\sim}{\twoheadleftarrow} Q \to Y'$ are two anafunctors, then their composition is given by the pullback

\[\begin{tikzcd}[ampersand replacement=\&,cramped,sep=tiny]
	\&\& {Z\times_YQ} \&\& \\
	\& Z \&\& Q \\
	X \&\& Y \&\& {Y',}
	\arrow["\sim"', two heads, from=1-3, to=2-2]
	\arrow[from=1-3, to=2-4]
	\arrow["\lrcorner"{anchor=center, pos=0.125, rotate=-45}, draw=none, from=1-3, to=3-3]
	\arrow["\sim"', two heads, from=2-2, to=3-1]
	\arrow[from=2-2, to=3-3]
	\arrow["\sim"', two heads, from=2-4, to=3-3]
	\arrow[from=2-4, to=3-5]
\end{tikzcd}\]
which is well defined up to canonical isomorphism. Associativity, and unitality for the identity anafunctor $X \overset{id}{\leftarrow} X \overset{id}{\rightarrow} X$ both hold up to natural isomorphism, as well. 

\begin{definition}\label{def:2mor ana}
A \textbf{2-morphism of anafunctors in} $\mathsf{Ana(Lie_n Gpd)}$ between $X \overset{\sim}{\twoheadleftarrow} Z \to Y$ and $X \overset{\sim}{\twoheadleftarrow} Z' \to Y$, is a natural transformation $\beta$ between the maps $Z \times_X Z' \to Z \to Y$ and $Z \times_X Z' \to Z' \to Y$ as in the following diagram:
\begin{equation}\label{diag:2-hom-ananatural}
\begin{tikzcd}[ampersand replacement=\&,cramped]
	\&\& Z \&\& \\
	X \&\& {Z\times_XZ'} \&\& Y \\
	\&\& {Z'}
	\arrow["\sim"'{pos=0.8}, two heads, from=1-3, to=2-1]
	\arrow[from=1-3, to=2-5]
	\arrow["\beta", curve={height=-44pt}, between={0.2}{0.7}, Rightarrow, nfold, from=1-3, to=3-3]
	\arrow["\sim", two heads, from=2-3, to=1-3]
	\arrow["\sim"', two heads, from=2-3, to=3-3]
	\arrow[""{name=0, anchor=center, inner sep=0}, "\sim"{pos=0.8}, two heads, from=3-3, to=2-1]
	\arrow[from=3-3, to=2-5]
	\arrow["\lrcorner"{anchor=center, pos=0.125, rotate=-90}, draw=none, from=2-3, to=0]
\end{tikzcd}
\end{equation}

A \textbf{2-morphism of anafunctors in} $\mathsf{Lie_n Gpd}[\huaW^{-1}]$ between $X \overset{\sim}{\twoheadleftarrow} Z \to Y$ and $X \overset{\sim}{\twoheadleftarrow} Z' \to Y$ is an equivalence class\footnote{This equivalence relation is detailed in \cite[\S 2.3]{Pronk1996}, \cite[p. 260]{Tommasini2016}. We do not recall it here, as our discussion only ever requires existence of single representatives. For a useful comparison criterion for 2-morphisms see \cite[Prop. 1.1]{Tommasini2016}.} of diagrams of the form 
\begin{equation}\label{diag:2-hom-diamond}
\begin{tikzcd}[ampersand replacement=\&,cramped]
	\&\& Z \&\& \\
	X \&\& Q \&\& Y \\
	\&\& {Z'}
	\arrow["f"', two heads, from=1-3, to=2-1]
	\arrow["\sim"'{pos=0.8}, draw=none, from=1-3, to=2-1]
	\arrow[from=1-3, to=2-5]
	\arrow["\alpha"', curve={height=30pt}, between={0.2}{0.7}, Rightarrow, nfold, from=1-3, to=3-3]
	\arrow["\beta", curve={height=-30pt}, between={0.2}{0.7}, Rightarrow, nfold, from=1-3, to=3-3]
	\arrow["q", from=2-3, to=1-3]
	\arrow["\sim"{description}, shift right=3, draw=none, from=2-3, to=1-3]
	\arrow["{q'}"', from=2-3, to=3-3]
	\arrow["\sim"{description}, shift left=3, draw=none, from=2-3, to=3-3]
	\arrow["{f'}", two heads, from=3-3, to=2-1]
	\arrow["\sim"{pos=0.8}, draw=none, from=3-3, to=2-1]
	\arrow[from=3-3, to=2-5]
\end{tikzcd}
\end{equation}
where the maps $f \circ q$ and $f' \circ q'$ are hypercovers, and $\alpha, \beta$ are 2-morphisms in the bicategory $\Liengpd$ that make the diagram commute. Note that $q$ and $q'$ are not necessarily hypercovers, but they are weak equivalences, by the 2-out-of-3 property.
\end{definition}

Vertical composition of 2-morphisms in $\mathsf{Ana(Lie_n Gpd)}$ is straightforward: it is obtained by composing natural transformations by pasting diagrams and using pullbacks when appropriate, to obtain a diagram like \eqref{diag:2-hom-ananatural} again. Horizontal composition is more involved and we refer to \cite{Roberts2012, MeyerZhu2015}. We refer to \cite{Pronk1996} for the compositions of 2-morphisms in $\mathsf{Lie_n Gpd}[\huaW^{-1}]$. A detailed discussion of the choices required can be found in \cite{Tommasini2016}. 

\begin{remark}
    In \cite[Lemma 34]{FarsiScullWatts2024}, it is shown that any 2-morphism of anafunctors between action (1-)groupoids of the form \eqref{diag:2-hom-diamond} is equivalent to one of the form \eqref{diag:2-hom-ananatural}. To extend this result to general Lie 2-groupoids would allow to rewrite \eqref{diag:1-1-morph-to-2-hom-diamond} in Theorem \ref{thm:1-1-morph-to-2-hom-diamond} as an equivalent 2-morphism in the form of \eqref{diag:2-hom-ananatural}. We leave this to future work. See also the comparison result \cite[Prop. 1.1]{Tommasini2016}, which details a criterion for two 2-morphisms in $\mathsf{Lie_n Gpd}[\huaW^{-1}]$ to be representatives of the same equivalence class. 
\end{remark}

\begin{remark}
    In \cite{ChenDuWang2019}, the authors show explicitly that $\mathsf{Ana(LieGpd)}$ is a (2,1)-category, i.e. its hom-spaces are groupoids. 
\end{remark}

\subsection{From bibundles to anafunctors of Lie groupoids}\label{sec:Bun-to-Ana-LieGpd}

We now recall the biaction groupoid construction which defines a 2-functor from $\mathsf{Bun(LieGpd)}$ to $\mathsf{Ana(LieGpd)}$. This is written in full in \cite[\S 6.10]{MeyerZhu2015} for this specific case. The construction also appears in \cite[Theorem 2.11]{MoerdijkMrcun2005}, where it is used to prove the equivalence between $\mathsf{Bun(LieGpd)}$ and the Pronk bicategorical localization of $\mathsf{LieGpd}$ at weak equivalences.  

We begin by constructing an anafunctor of Lie groupoids from an HS bibundle. Let $A$ and $B$ be Lie groupoids, and $P: A \dashrightarrow B$ a bibundle between them which is left principal (with respect to the $B$-action) as in Definition \ref{def:HS-bibun}. Then we construct the \textbf{biaction groupoid} $\widetilde{P}$ by taking 
\begin{equation}\label{eq:biaction-groupoid}
\begin{split}
    &\widetilde{P}_1 := B_1 \times_{s, B_0, l} P \times_{r,A_0,t} A_1,  \qquad \widetilde{P}_0 := P,\\
    &s(b, p, a) = p \cdot a, \qquad t(b,p,a) = b \cdot p,\\
    &\text{and whenever } p'\cdot a' = b \cdot p, \quad (b',p',a')(b, p, a) = (b'b, b^{-1}\cdot p', a'a) = (b'b, p\cdot (a')^{-1}, a'a).
\end{split}
\end{equation}
We depict these structure maps in Figure \ref{fig:biaction-groupoid}.

\begin{figure}[!h]
    \centering
    \includegraphics[width=0.65\linewidth]{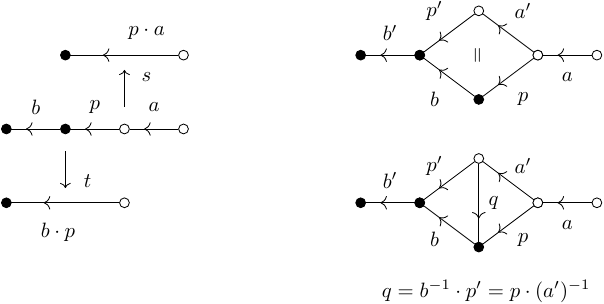}
    \caption{Source, target and multiplication of the biaction groupoid $\widetilde{P}$.}
    \label{fig:biaction-groupoid}
\end{figure}

This construction gives a span of Lie groupoids $B \leftarrow \widetilde{P} \overset{\sim}{\twoheadrightarrow} A$, where the arrow $\widetilde{r}$ to the right is a hypercover, i.e. an anafunctor. The proof of this is a straightforward consequence of the principality of the $B$-action.

Now starting from a biequivariant map $\alpha:P\to Q$, where $Q$ is another HS bibundle from $A$ to $B$, we can produce a 2-morphism in $\mathsf{Ana(LieGpd)}$ in the following way:
\[\begin{tikzcd}[ampersand replacement=\&,cramped, row sep=scriptsize]
	{B_1} \& P \& {A_1} \&\&\&\& {\widetilde{P}} \&\& \\
	{B_0} \&\& {A_0} \&\& A \&\& {\widetilde{P}\times_A\widetilde{Q}} \&\& B \\
	\& Q \&\&\&\&\& {\widetilde{Q}}
	\arrow[shift right, from=1-1, to=2-1]
	\arrow[shift left, from=1-1, to=2-1]
	\arrow["l"', from=1-2, to=2-1]
	\arrow["r", two heads, from=1-2, to=2-3]
	\arrow["\alpha", from=1-2, to=3-2]
	\arrow["\cong"{description}, shift right=3, draw=none, from=1-2, to=3-2]
	\arrow[shift right, from=1-3, to=2-3]
	\arrow[shift left, from=1-3, to=2-3]
	\arrow["{\widetilde{r}}"', two heads, from=1-7, to=2-5]
	\arrow["\sim"{description, pos=0.9}, shift right=3, draw=none, from=1-7, to=2-5]
	\arrow["{\widetilde{\,l\,}}", from=1-7, to=2-9]
	\arrow["{\widetilde{\alpha}}", curve={height=-44pt}, between={0.2}{0.8}, Rightarrow, nfold, from=1-7, to=3-7]
	\arrow["{\widetilde{\,\cdot\,}}", between={0.2}{0.8}, Rightarrow, nfold, from=2-3, to=2-5]
	\arrow["\sim", two heads, from=2-7, to=1-7]
	\arrow["\sim"', two heads, from=2-7, to=3-7]
	\arrow["{l'}", from=3-2, to=2-1]
	\arrow["{r'}"', two heads, from=3-2, to=2-3]
	\arrow["{\widetilde{r'}}", two heads, from=3-7, to=2-5]
	\arrow["\sim"{description, pos=0.9}, shift left=2, draw=none, from=3-7, to=2-5]
	\arrow["{\widetilde{l'}}"', from=3-7, to=2-9]
\end{tikzcd}\]
Here $\widetilde{\alpha}$ is the natural transformation represented by the map $(\widetilde{P}\times_A \widetilde{Q})_0 \to B_1$ given by $\widetilde{\alpha}(p,q) = b$, for the unique $b$ such that $q = b\cdot\alpha(p)$. Such a $b$ exists, is unique, and varies smoothly by the fact that $\alpha$ is an equivariant isomorphism and principality of the $B$ action on $P$ and $Q$. Naturality can be easily checked by the fact that $\widetilde\alpha(b\cdot p,b'\cdot q)
  = b'\widetilde\alpha(p,q)b^{-1}$.

After checking that compositions and units are preserved up to a canonical equivalence (i.e. an invertible 2-morphism), by the biaction groupoid construction, one gets the following result.

\begin{proposition}[{\cite[Thm. 7.18]{MeyerZhu2015}}]\label{prop:Bun-to-Ana-LieGpd}
    The biaction groupoid construction defines a pseudofunctor that is an equivalence of bicategories
    \begin{equation*}
        \widetilde{\,\cdot\,} : \mathsf{Bun(Lie Gpd)} \longrightarrow \mathsf{Ana(LieGpd)}.
    \end{equation*}
\end{proposition}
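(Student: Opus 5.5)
The plan is to verify that the biaction groupoid construction gives a pseudofunctor, and then show it is biessentially surjective on objects and a local equivalence on hom-groupoids. It is the identity on objects, so the first condition is automatic. The real content is that each functor $\mathsf{Bun(LieGpd)}(A,B)\to\mathsf{Ana(LieGpd)}(A,B)$ is an equivalence of groupoids.

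\emph{Well-definedness.} First I check that $\widetilde{P}$ from \eqref{eq:biaction-groupoid} is a Lie groupoid. Smoothness of $\widetilde{P}_1$ follows because $r$ and the source and target maps are submersions. Next, $\widetilde{r}:\widetilde{P}\to A$, $(b,p,a)\mapsto a$, is a hypercover:
\begin{itemize}
\item $\Acyc(0)$ holds because $r$ is a surjective submersion.
\item $\Acyc!(1)$ says that the map $(b,p,a)\mapsto(p\cdot a,\,b\cdot p,\,a)$ is a diffeomorphism onto $\{(p_0,p_1,a): r(p_0)=s(a),\ r(p_1)=t(a)\}$. Given such data, $p_1$ and $p_0\cdot a^{-1}$ lie in the same $r$-fibre, so by principality there is a unique smooth $b$ with $b\cdot(p_0\cdot a^{-1})=p_1$. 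This inverts the map.
\end{itemize}
Functoriality on 2-cells was essentially shown above via $\widetilde{\alpha}$. I still need to check that $\widetilde{\beta\alpha}=\widetilde{\beta}\circ\widetilde{\alpha}$ after restricting along the triple fibre product, and that identities go to identities. Both follow from uniqueness of the $b$ in the definition.

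\emph{Compositors and unitors.} For $P:A\dashrightarrow B$ and $Q:B\dashrightarrow C$, I compare $\widetilde{P}\times_B\widetilde{Q}$ with $\widetilde{(P\times_{B_0}Q)/B}$. There is a functor
\[
\widetilde{P}\times_B\widetilde{Q}\to\widetilde{(P\times_{B_0}Q)/B},\qquad (p,q)\mapsto[p,q],
\]
defined on objects as shown and extended to arrows in the evident way. It commutes strictly with the legs to $A$ and $C$. Such a map over $A$ of anafunctors induces an invertible 2-morphism: by 2-out-of-3 it is compatible with the hypercover legs, and on the fibre product the required natural transformation is the identity. The unitor uses the map $\widetilde{A_1}\to A$ over $A$. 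Coherence (pentagon and triangle) then holds because every 2-cell involved is induced by a map over the base, and such 2-cells are unique when they exist. That uniqueness comes from the principality argument above.

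\emph{Local equivalence.} This is the main step. Faithfulness and fullness come from the following observation. A 2-morphism of anafunctors $\widetilde{P}\Rightarrow\widetilde{Q}$ is given by a map $\beta:P\times_{A_0}Q\to B_1$ satisfying naturality. From it I define $\alpha(p)=\beta(p,q)^{-1}\cdot q$ for any $q$ with $r(q)=r(p)$. Naturality with respect to arrows of $\widetilde{Q}$ over identities of $A$ shows that $\alpha(p)$ is independent of the choice of $q$. Since $r'$ is a surjective submersion, local sections make $\alpha$ smooth, and biequivariance follows from naturality. This gives a bijection with the $\widetilde{\alpha}$ construction.

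Essential surjectivity is the hardest part. Given an anafunctor $A\overset{\sim}{\twoheadleftarrow}Z\overset{g}{\to}B$, I would build the bibundle
\[
P:=(Z_0\times_{g,B_0,t}B_1)/Z_1,
\]
where $Z_1$ acts by $(z,b)\cdot\zeta=(s\zeta,\,b\,g(\zeta))$ for arrows $\zeta$ in $Z$; here $z=t(\zeta)$ is the target of $\zeta$, and the formula matches the convention $b\,g(\zeta)$ used for the arrow action. The moment maps are $l(z,b)=s(b)$ and $r(z,b)=f(z)$, and $B$ acts on the left on the $B_1$ factor.

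The obstacle is showing that this quotient is a manifold and that the $B$-action is principal. I would try to handle this by using that $f$ is a hypercover, so $Z$ is isomorphic to the pullback groupoid $f^*A$ of the cover $Z_0\twoheadrightarrow A_0$. The quotient then descends along a submersion, which should make both smoothness and principality tractable. Finally I would exhibit an invertible 2-cell $\widetilde{P}\Leftarrow Z$ given by the map over $A$ sending $z\mapsto[z,1_{g(z)}]$.
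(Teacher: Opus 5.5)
You take a genuinely different route from the paper: you verify the pseudofunctor directly and then show an equivalence on each hom-groupoid. The paper only sketches the assignment on 1- and 2-cells and defers to \cite[Thm.~7.18]{MeyerZhu2015}. Your $\Acyc!(1)$ check is correct, and so is the inverse $\beta\mapsto\alpha$, $\alpha(p)=\beta(p,q)^{-1}\cdot q$, for local full faithfulness.

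\textbf{Essential surjectivity fails as written.} You quotient $Z_0\times_{g,B_0,t}B_1$ by \emph{all} of $Z_1$. This identifies a point over $z=t\zeta$ with one over $s\zeta$. But $r$ takes the values $f(t\zeta)=t(f\zeta)$ and $f(s\zeta)=s(f\zeta)$ on these two points. They differ whenever $f(\zeta)$ is not a loop, and every non-loop arrow of $A$ lifts to $Z$. So $r$ does not descend, and the quotient lives over the orbit space of $A$ rather than over $A_0$. Consequently:
\begin{itemize}
\item there is no room for a right $A$-action, which you never define;
\item $z\mapsto[z,1_{g(z)}]$ collapses $Z$-orbits and cannot be a map over $A$.
\end{itemize}

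The fix is to quotient only by the kernel of $f$, i.e.\ the arrows $\kappa$ with $f(\kappa)$ a unit. By $\Acyc!(1)$ these form the submersion groupoid $Z_0\times_{A_0}Z_0$; equivalently, compose the bundle of $g$ with the inverse of the bundle of $f$. Then $P$ is the descent of the principal $B$-bundle $Z_0\times_{g,B_0,t}B_1\to Z_0$ along the surjective submersion $f_0$. This settles smoothness and principality at once: a kernel arrow $z\to z$ is a unit, so the $r$-fibre over $f(z)$ is $t^{-1}(g(z))$. The right action is $[z,b]\cdot f(\zeta)=[s\zeta,\,g(\zeta)^{-1}b]$ for any lift $\zeta$ with $t\zeta=z$. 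It is well defined exactly because two lifts of the same arrow differ by a kernel arrow. Also, $b\,g(\zeta)$ is not composable when $t(b)=g(z)$; you need $g(\zeta)^{-1}b$.

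\textbf{The coherence argument rests on a false claim.} 2-cells induced by maps over both legs are not unique. The 2-cell induced by $\phi\colon Z\to Z'$ is not the identity on $Z\times_A Z'$. It is the image in $B$ of the unique arrow $\phi(z)\to z'$ of $Z'$ lying over a unit of $A$, and it depends on $\phi$. Your own faithfulness result refutes uniqueness: two distinct biequivariant automorphisms $\alpha\neq\alpha'$ of $P$ give maps $(b,p,a)\mapsto(b,\alpha(p),a)$ of $\widetilde{P}$ over both legs, and these induce $\widetilde{\alpha}\neq\widetilde{\alpha'}$.

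What is true, and suffices, is that the assignment sending $\phi$ to its induced 2-cell respects composition of maps and whiskering. Each coherence axiom for the compositors and unitors then reduces to checking that two explicit composite maps into the biaction groupoid of the iterated composite bibundle agree on the nose. They do, since both send $(p,q,r)$ to the class $[p,q,r]$ under the canonical identification of iterated quotients.
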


\subsection{A double bicategory of anafunctors of Lie 2-groupoids}

One can consider more general kinds of 2-cells between anafunctors. For example, by considering square 2-cells, one can obtain a \textit{double bicategory} as defined in \cite[Def. 3.1.1]{Morton2009}. Hence, we explain how to construct the double bicategory $\mathsf{2}\Ana(\Liengpd)$ from the bicategory $\Ana(\Liengpd)$. We picture this in Figure \ref{fig:double-bicat-anafunctors}.

\begin{definition}\label{def:1-1-2-1-1-2-homs-anafunctors}
    A \textbf{double anafunctor} of Lie $n$-groupoids is a commutative diagram 
	\[\begin{tikzcd}[ampersand replacement=\&,cramped, sep=scriptsize]
	Z \& D \& {Y} \\
	B \& \Omega \& C \\
	X \& A \& W
	\arrow[from=1-2, to=1-1]
	\arrow["\sim", two heads, from=1-2, to=1-3]
	\arrow[from=2-1, to=1-1]
	\arrow["\sim", two heads, from=2-1, to=3-1]
	\arrow[from=2-2, to=1-2]
	\arrow[from=2-2, to=2-1]
	\arrow["\sim", two heads, from=2-2, to=2-3]
	\arrow["\sim", two heads, from=2-2, to=3-2]
	\arrow[from=2-3, to=1-3]
	\arrow["\sim", two heads, from=2-3, to=3-3]
	\arrow[from=3-2, to=3-1]
	\arrow["\sim", two heads, from=3-2, to=3-3]
	\end{tikzcd}\]
    where each of the spaces is a Lie $n$-groupoid and each span is an anafunctor. Note that we orient double anafunctors with their horizontal sides pointing left and vertical sides pointing up, following the conventions in \cite{MehtaTang2011, AGJ2024}.

    A \textbf{(2,1)-morphism} of double anafunctors is a pair of double anafunctors with the same vertical sides, and whose rows are each connected by a 2-morphism of anafunctors in $\mathsf{Ana(Lie_n Gpd)}$ as in Figure \ref{fig:2-1-1-2-morph-ana}. These three $2$-morphisms are required to be compatible with the vertical structure maps of the double anafunctors, by making the diagram commute.

    A \textbf{(1,2)-morphism} of double anafunctors is a pair of double anafunctors with the same horizontal sides, and whose columns are each connected by a 2-morphism of anafunctors in $\mathsf{Ana(Lie_n Gpd)}$ as in Figure \ref{fig:2-1-1-2-morph-ana}, with a compatibility condition analogous to the previous one. 
\end{definition}

\begin{figure}[h]
    \centering
\begin{adjustbox}{width=0.8\textwidth}
\begin{tikzcd}[ampersand replacement=\&,cramped,sep=small]
	\&\& {(2,1)} \&\&\&\&\&\&\&\& {(1,2)} \&\&\&\&\& \\
	Z \&\&\& {D'_\bullet} \&\&\&\& Z \&\&\& D \&\&\& Y \\
	\&\& {D \times_Y D'} \\
	\& D \&\&\&\& Y \&\&\&\& {B_\bullet} \&\&\& \Omega \&\&\& {C_\bullet} \\
	B \&\&\& {\Omega'} \&\&\&\&\& {B\times_X B'} \&\&\& {\Omega\times_A \Omega'} \&\&\& {C \times_W C'} \\
	\&\& {\Omega \times_C\Omega'} \&\&\&\&\& {B'_\bullet} \&\&\& {\Omega'} \&\&\& {C'_\bullet} \\
	\& \Omega \&\&\&\& C \\
	X \&\&\& {A'} \\
	\&\& {A\times_W A'} \&\&\&\&\&\&\& X \&\&\& A \&\&\& W \\
	\& A \&\&\&\& W
	\arrow[from=2-4, to=2-1]
	\arrow[two heads, from=2-4, to=4-6]
	\arrow[from=2-11, to=2-8]
	\arrow[from=2-11, to=2-14]
	\arrow[from=3-3, to=2-4]
	\arrow[from=3-3, to=4-2]
	\arrow[from=4-2, to=2-1]
	\arrow[curve={height=-22pt}, between={0.2}{0.7}, Rightarrow, nfold, from=4-2, to=2-4]
	\arrow[two heads, from=4-2, to=4-6]
	\arrow[from=4-10, to=2-8]
	\arrow[curve={height=36pt}, between={0.2}{0.8}, Rightarrow, nfold, from=4-10, to=6-8]
	\arrow[from=4-10, to=9-10]
	\arrow[from=4-13, to=2-11]
	\arrow[from=4-13, to=4-10]
	\arrow[from=4-13, to=4-16]
	\arrow[curve={height=36pt}, between={0.2}{0.8}, Rightarrow, nfold, from=4-13, to=6-11]
	\arrow[from=4-13, to=9-13]
	\arrow[from=4-16, to=2-14]
	\arrow[curve={height=36pt}, between={0.2}{0.8}, Rightarrow, nfold, from=4-16, to=6-14]
	\arrow[from=4-16, to=9-16]
	\arrow[from=5-1, to=2-1]
	\arrow[two heads, from=5-1, to=8-1]
	\arrow[from=5-4, to=2-4]
	\arrow[from=5-4, to=5-1]
	\arrow[two heads, from=5-4, to=7-6]
	\arrow[two heads, from=5-4, to=8-4]
	\arrow[from=5-9, to=4-10]
	\arrow[from=5-9, to=6-8]
	\arrow[from=5-12, to=4-13]
	\arrow[from=5-12, to=6-11]
	\arrow[from=5-15, to=4-16]
	\arrow[from=5-15, to=6-14]
	\arrow[from=6-3, to=5-4]
	\arrow[from=6-3, to=7-2]
	\arrow[from=6-8, to=2-8]
	\arrow[from=6-8, to=9-10]
	\arrow[from=6-11, to=2-11]
	\arrow[from=6-11, to=6-8]
	\arrow[from=6-11, to=6-14]
	\arrow[from=6-11, to=9-13]
	\arrow[from=6-14, to=2-14]
	\arrow[from=6-14, to=9-16]
	\arrow[from=7-2, to=4-2]
	\arrow[from=7-2, to=5-1]
	\arrow[curve={height=-22pt}, between={0.2}{0.7}, Rightarrow, nfold, from=7-2, to=5-4]
	\arrow[two heads, from=7-2, to=7-6]
	\arrow[two heads, from=7-2, to=10-2]
	\arrow[from=7-6, to=4-6]
	\arrow[two heads, from=7-6, to=10-6]
	\arrow[from=8-4, to=8-1]
	\arrow[two heads, from=8-4, to=10-6]
	\arrow[from=9-3, to=8-4]
	\arrow[from=9-3, to=10-2]
	\arrow[from=9-13, to=9-10]
	\arrow[from=9-13, to=9-16]
	\arrow[from=10-2, to=8-1]
	\arrow[curve={height=-22pt}, between={0.2}{0.7}, Rightarrow, nfold, from=10-2, to=8-4]
	\arrow[two heads, from=10-2, to=10-6]
\end{tikzcd}
\end{adjustbox}
\caption{(2,1) and (1,2)-morphisms of double anafunctors.}
\label{fig:2-1-1-2-morph-ana}
\end{figure}

(1,1)-morphisms can be composed vertically and horizontally by composing the vertical or horizontal spans, respectively. Both of these compositions are defined again up to canonical isomorphism, as they are defined in terms of pullbacks. More precisely, the horizontal composition is defined up to a canonical $(2,1)$-isomorphism, and the vertical one up to a canonical $(1,2)$-isomorphism. 
Similarly, $(2,1)$ and $(1,2)$-morphisms can be composed in three different directions by using the compositions of $(1,1)$-morphisms or $2$-morphisms. 
This double bicategory structure also includes 4-dimensional cells that are pictured as the $(2,2)$-cells in Figure \ref{fig:double-bicat-anafunctors}.

\begin{definition}
    The double bicategory of anafunctors of Lie $n$-groupoids $\mathsf{2Ana(Lie_n Gpd)}$ consists of the following:
    \begin{itemize}
		\item Its objects are Lie $n$-groupoids;
		\item Its horizontal ($(1,0)$-) morphisms and vertical ($(0,1)$-) morphisms are anafunctors. Morphisms between these (i.e. $(2,0)$-morphisms and $(0,2)$-morphisms, respectively) are the 2-morphisms of anafunctors in $\mathsf{Ana(Lie_n Gpd)}$.
		\item Its $(1,1)$-morphisms are the double anafunctors, and $(2,1)$- and $(1,2)$-morphisms are as in Definition \ref{def:1-1-2-1-1-2-homs-anafunctors}.
        \item $(2,2)$-morphisms follow the schematic in Figure \ref{fig:double-bicat-anafunctors}.
	\end{itemize}
\end{definition}

\begin{figure}[!h]
    \centering
    \includegraphics[width=0.7\linewidth]{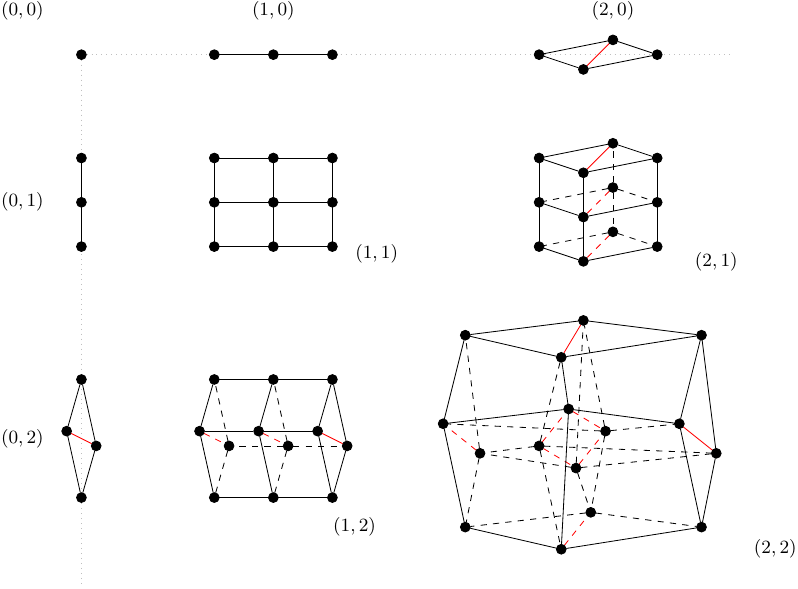}
    \caption{A schematic of the double bicategory of anafunctors $\mathsf{2Ana(Lie_n Gpd)}$. The red lines indicate 2-morphisms of anafunctors. The dashed lines are only meant to improve readability.}
    \label{fig:double-bicat-anafunctors}
\end{figure}

\section{Double Lie groupoids}\label{sec:double stuff}

We begin by recalling the main definitions in \cite[Sec. 2]{AGJ2024}, which allow us to construct a double bicategory $\mathsf{Bun}(\mathsf{LieGpd^2})$ of double Lie groupoids with horizontal and vertical bibundles as generalized morphisms.  
This double bicategory is a generalization of the bicategory of Lie groupoids and HS bibundles $\mathsf{Bun}(\mathsf{LieGpd})$, and it is obtained by a process of localization by bibundles of the category of double groupoids considered as internal Lie groupoids in the category of Lie groupoids first in the horizontal and then in the vertical direction, thus giving two directions of morphisms. This characteristic of double groupoids which makes one able to view them equivalently as internal groupoids in one or the other direction gives rise  to a concept of vertical/horizontal duality which we make explicit in Remark \ref{rem:vert-hor-duality}. We then show that after restricting to full double groupoids, the biaction groupoid construction maps $\mathsf{Bun(LieGpd^2_{full})}$ to a suitable double bicategory of anafunctors of double groupoids $\Ana(\mathsf{LieGpd^2_{full}})$.

\begin{definition}
    A \textbf{double groupoid} $G$ is a groupoid object in the category of groupoids. We depict this by a diagram
    \begin{equation}\label{diag:double-gpd}
    \begin{tikzcd}[ampersand replacement=\&, column sep=scriptsize]
	{G_{11}} \& {G_{01}} \\
	{G_{10}} \& {G_{00},}
	\arrow["{t_{\bullet 1}}"', shift right, from=1-1, to=1-2]
	\arrow["{s_{\bullet 1}}", shift left, from=1-1, to=1-2]
	\arrow["{s_{1\bullet}}"', shift right, from=1-1, to=2-1]
	\arrow["{t_{1\bullet}}", shift left, from=1-1, to=2-1]
	\arrow["s"', shift right, from=1-2, to=2-2]
	\arrow["t", shift left, from=1-2, to=2-2]
	\arrow["t"', shift right, from=2-1, to=2-2]
	\arrow["s", shift left, from=2-1, to=2-2]
    \end{tikzcd}\end{equation}
    where each side represents a groupoid structure and all structure maps of each groupoid are groupoid morphisms with respect to the other direction.
    \end{definition}
    
    We picture the elements of a double groupoid as in Figure \ref{fig:double_groupoid}. In this convention, source and target maps follow the direction of the arrows: the vertical source $s_{1\bullet}$ of a square is its bottom side, while the horizontal source $s_{\bullet 1}$ is its right side.
    By continuing the diagram \eqref{diag:double-gpd} using the nerve construction at each row and column, one obtains a bisimplicial set where each row and each column is a 1-groupoid. We identify the bisimplicial nerve with the double groupoid itself. We denote the groupoid whose nerve is the $k$-th row of the bisimplicial nerve of $G$ by $G_{\bullet k}$, and the groupoid whose nerve is the $k$-th column of the bisimplicial nerve of $G$ by $G_{k \bullet }$. In particular, $G_{\bullet 0}= G_{10}\rightrightarrows G_{00}$ and $G_{0\bullet}= G_{01}\rightrightarrows G_{00}$. These are called side groupoids of $G$. We denote the source and target of the side groupoids simply by $s$ and $t$ since it will be clear from context which one we are referring to. In addition, there are two compatible multiplications on elements of $G_{11}$, which are represented by squares: the vertical one of $G_{1\bullet}$ which we denote by $\vJoin$ and the horizontal one of $G_{\bullet 1}$ which we denote by $\Join$. We denote the source and target of these latter groupoids by $s_{1 \bullet}, t_{1\bullet}$ and $s_{\bullet 1}, t_{\bullet 1}$, respectively. 

\begin{definition}
    A \textbf{double Lie groupoid} is a double groupoid where each of the sides $G_{0 \bullet}$, $G_{\bullet 0}$, $G_{\bullet 1}$, $G_{1 \bullet}$, is a Lie groupoid. Notably, this implies that the double source map
    \begin{equation*}
        (s_{1\bullet}, s_{\bullet 1 }): G_{11} \longrightarrow G_{10} \times_{s, G_{00}, s} G_{01}
    \end{equation*}
    is a submersion. We say a double Lie groupoid is \textbf{full} if this map is also surjective.
\end{definition}
The fullness assumption holds for many examples, especially those coming from algebraic topology (see \cite{Cegarra2025}). However, in Lie theory, the double source map often fails to be surjective, and such is the situation for many of the examples introduced in \cite{LuWeinstein1989}. It is important to emphasize that all of our main results which involve generalized morphisms of double Lie groupoids, use the fullness assumption.

\begin{remark}
    The double source map is a surjective submersion if and only if any of the maps $(s_{\bullet 1}, t_{1 \bullet})$, $(t_{\bullet 1}, s_{1 \bullet})$, and $(t_{\bullet 1}, t_{1 \bullet})$ are surjective submersions (see Figure \ref{fig:fullness-double-groupoid}). This can be seen by composing with the vertical and horizontal inversions of $G_{11}$ as appropriate. 
\end{remark}

\begin{figure}[h]
        \centering
        \includegraphics[width=0.6\linewidth]{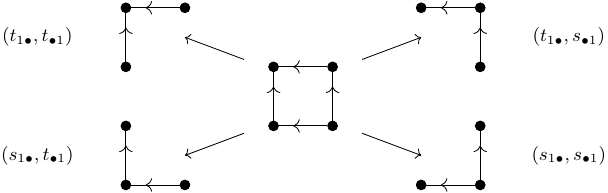}
        \caption{A double Lie groupoid is full if at least one of these maps is a surjective submersion. If this holds, then all of these maps are surjective submersions.}
        \label{fig:fullness-double-groupoid}
    \end{figure}

\begin{definition}
    A \textbf{right horizontal action} of a double Lie groupoid $G$ on a Lie groupoid morphism $r: A \to G_{0 \bullet}$ called the \textbf{moment map} is a Lie groupoid morphism whose components are groupoid actions which is called the \textbf{action morphism} 
    \[\begin{tikzcd}[ampersand replacement=\&]
	{A_1\times_{r_1,G_{01},t_{\bullet 1}} G_{11}} \& {A_1} \\
	{A_0 \times_{r_0, G_{00}, t} G_{10}} \& {A_0,}
	\arrow[from=1-1, to=1-2]
	\arrow[shift left, from=1-1, to=2-1]
	\arrow[shift right, from=1-1, to=2-1]
	\arrow[shift left, from=1-2, to=2-2]
	\arrow[shift right, from=1-2, to=2-2]
	\arrow[from=2-1, to=2-2]
    \end{tikzcd}\]
    where the groupoid structure of the pullback on the left is the one inherited from the product.

    Analogously, a \textbf{left horizontal action} of a double Lie groupoid $J$ on a moment map $l: A \to J_{0 \bullet}$ is a Lie groupoid morphism whose components are groupoid actions
    \begin{equation}\label{diag:left-hor-action}
    \begin{tikzcd}[ampersand replacement=\&]
	{J_{11}\times_{s_{\bullet 1},J_{01},l_1} A_1} \& {A_1} \\
	{J_{10} \times_{s, J_{00}, l_0} A_0} \& {A_0.}
	\arrow[from=1-1, to=1-2]
	\arrow[shift left, from=1-1, to=2-1]
	\arrow[shift right, from=1-1, to=2-1]
	\arrow[shift left, from=1-2, to=2-2]
	\arrow[shift right, from=1-2, to=2-2]
	\arrow[from=2-1, to=2-2]
    \end{tikzcd}
    \end{equation}
\end{definition}

This notion of action appears in \cite[Def. 1.5]{BrownMackenzie1992} and \cite{Stefanini2008}, where this is called a \textit{morphic action}.

\begin{definition}
    Let $p: A \to Q$ be a Lie groupoid morphism and a levelwise surjective submersion. 
    A (left) horizontal action is \textbf{principal} if the double groupoid morphism 
    \begin{equation}\label{diag:left-principal}
    \begin{tikzcd}[ampersand replacement=\&,column sep=small,row sep=scriptsize]
	{J_{11}\times_{s_{\bullet 1},J_{01},l_1} A_1} \& {A_1} \&\& {A_1\times_{p_1,Q_1,p_1} A_1} \& {A_1} \\
	{J_{10} \times_{s, J_{00}, l_0} A_0} \& {A_0} \&\& {A_0\times_{p_0,Q_0,p_0} A_0} \& {A_0}
	\arrow[shift right, from=1-1, to=1-2]
	\arrow[shift left, from=1-1, to=1-2]
	\arrow[shift left, from=1-1, to=2-1]
	\arrow[shift right, from=1-1, to=2-1]
	\arrow[""{name=0, anchor=center, inner sep=0}, shift left, from=1-2, to=2-2]
	\arrow[shift right, from=1-2, to=2-2]
	\arrow[shift left, from=1-4, to=1-5]
	\arrow[shift right, from=1-4, to=1-5]
	\arrow[shift left, from=1-4, to=2-4]
	\arrow[""{name=1, anchor=center, inner sep=0}, shift right, from=1-4, to=2-4]
	\arrow[shift left, from=1-5, to=2-5]
	\arrow[shift right, from=1-5, to=2-5]
	\arrow[shift left, from=2-1, to=2-2]
	\arrow[shift right, from=2-1, to=2-2]
	\arrow[shift left, from=2-4, to=2-5]
	\arrow[shift right, from=2-4, to=2-5]
	\arrow["\cong"{pos=0.4}, between={0.2}{0.6}, from=0, to=1]
    \end{tikzcd}    
    \end{equation}
    given by the action map and the projection is an isomorphism between the action double groupoid and the submersion double groupoid relative to $p:A \to Q$. We picture this in Figure \ref{fig:left-hor-action-principality}. 
\end{definition}

\begin{figure}[h]
    \centering
    \includegraphics[width=0.9\linewidth]{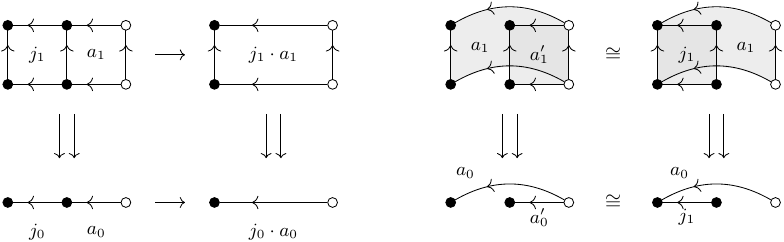}
    \caption{A left horizontal action and its principality condition.}
    \label{fig:left-hor-action-principality}
\end{figure}

The analogous notions of left \textbf{vertical action} of a double groupoid $J$ on a Lie groupoid morphism $B \to J_{\bullet 0}$, and of principality of such an action can be defined similarly by swapping the groupoid $J_{1\bullet}$ for the groupoid $J_{\bullet 1}$ in \eqref{diag:left-hor-action} and \eqref{diag:left-principal}, respectively. The same goes for the notion of right vertical action. 

\begin{remark}[Vertical/horizontal duality]\label{rem:vert-hor-duality}
    This is a general feature of the theory: because our definitions of double groupoids and horizontal and vertical morphisms are symmetric under permutation of the horizontal and vertical directions with each other, many of the results we prove for the horizontal case have a dual vertical counterpart. The proofs thereof are obtained the same way as in the horizontal case, by making the necessary substitutions. 
\end{remark}

We now make use of the notion of horizontal and vertical actions to construct the double bicategory $\mathsf{Bun}(\mathsf{LieGpd^2})$. This is analogous to $\mathsf{Bun}(\mathsf{LieGpd})$, but it has two distinct directions of bibundles. 

\begin{definition}\label{def:hormor}
    A \textbf{horizontal generalized morphism} (or \textbf{(1,0)-morphism}) $A: G \hormor J$ of double groupoids $G,J$ is a Lie groupoid $A$ formed by a pair of compatible HS bibundles $A_1$ and $A_0$, between $G_{\bullet 1}$ and $J_{\bullet 1}$ and between $G_{\bullet 0}$ and $J_{\bullet 0}$, respectively.
    \end{definition}

    For the sake of clarity, we spell this out. A (1,0)-morphism consists of:
    \begin{itemize}
        \item a Lie groupoid $A$ with two moment maps (groupoid morphisms) 
        $$l: A \to J_{0\bullet}, \qquad r: A \to G_{0\bullet},$$ 
        \item a right horizontal $G$-action on $r: A \to G_{0\bullet}$,
        \item a left horizontal $J$-action on $l:A \to J_{0\bullet}$,
    \end{itemize}
    such that:
    \begin{itemize}
        \item The actions commute. 
        \item The left action is principal with quotient map $r:A \to G_{0\bullet}$ (which is thus a levelwise surjective submersion).
    \end{itemize}
    
    We represent this data in the diagram
\begin{equation}\label{diag:hor-gen-mor}
\begin{tikzcd}[ampersand replacement=\&,cramped,sep=small]
	{J_{11}} \&\& {J_{01}} \&\& {A_1} \&\& {G_{01}} \&\& {G_{11}} \\
	\\
	{J_{10}} \&\& {J_{00}} \&\& {A_0} \&\& {G_{00}} \&\& {G_{10}}
	\arrow[shift right, from=1-1, to=1-3]
	\arrow[shift left, from=1-1, to=1-3]
	\arrow[shift right, from=1-1, to=3-1]
	\arrow[shift left, from=1-1, to=3-1]
	\arrow[shift left, from=1-3, to=3-3]
	\arrow[shift right, from=1-3, to=3-3]
	\arrow["{l_1}"', from=1-5, to=1-3]
	\arrow["{r_1}", two heads, from=1-5, to=1-7]
	\arrow[shift left, from=1-5, to=3-5]
	\arrow[shift right, from=1-5, to=3-5]
	\arrow[shift left, from=1-7, to=3-7]
	\arrow[shift right, from=1-7, to=3-7]
	\arrow[shift left, from=1-9, to=1-7]
	\arrow[shift right, from=1-9, to=1-7]
	\arrow[shift left, from=1-9, to=3-9]
	\arrow[shift right, from=1-9, to=3-9]
	\arrow[shift right, from=3-1, to=3-3]
	\arrow[shift left, from=3-1, to=3-3]
	\arrow["{l_0}"', from=3-5, to=3-3]
	\arrow["{r_0}", two heads, from=3-5, to=3-7]
	\arrow[shift left, from=3-9, to=3-7]
	\arrow[shift right, from=3-9, to=3-7]
\end{tikzcd}
\end{equation}
and depict the elements of its constituent spaces in Figure \ref{fig:hor-gen-morphism}.

\begin{figure}[h]
    \centering
    \includegraphics[width=0.7\linewidth]{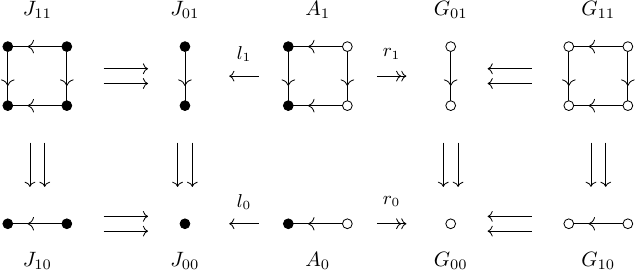}
    \caption{A horizontal generalized morphism}
    \label{fig:hor-gen-morphism}
\end{figure}

Composition of horizontal generalized morphisms is defined by levelwise composition of HS morphisms. Each composition is well defined up to a natural canonical isomorphism, which in this case takes the form of a \textbf{(2,0)-morphism} between (1,0)-morphisms: a biequivariant Lie groupoid morphism between the horizontal bibundles.\footnote{As for HS bibundles of Lie groupoids, biequivariant Lie groupoid morphisms are necessarily isomorphisms.}

\begin{example}
    The notion of $\mathsf{PB}$-groupoid appearing in \cite{CattafiGarmendia2025} is an example of a principal action of a strict Lie 2-groupoid viewed as a double groupoid with one side an identity groupoid. Therefore a $\mathsf{PB}$-groupoid for the strict Lie 2-groupoid $H$ over the groupoid $G \rightrightarrows M$ can be seen as a (1,0)-morphism as follows:
\[\begin{tikzcd}[ampersand replacement=\&,cramped,sep=small]
	{H_{2}} \&\& {H_0} \&\& {P_1} \&\& G \&\& G \\
	\\
	{H_1} \&\& {H_0} \&\& {P_0} \&\& M \&\& M.
	\arrow[shift right, from=1-1, to=1-3]
	\arrow[shift left, from=1-1, to=1-3]
	\arrow[shift right, from=1-1, to=3-1]
	\arrow[shift left, from=1-1, to=3-1]
	\arrow[shift left, no head, from=1-3, to=3-3]
	\arrow[shift right, no head, from=1-3, to=3-3]
	\arrow["{l_1}"', from=1-5, to=1-3]
	\arrow["{r_1}", two heads, from=1-5, to=1-7]
	\arrow[shift left, from=1-5, to=3-5]
	\arrow[shift right, from=1-5, to=3-5]
	\arrow[shift left, from=1-7, to=3-7]
	\arrow[shift right, from=1-7, to=3-7]
	\arrow[shift left, no head, from=1-9, to=1-7]
	\arrow[shift right, no head, from=1-9, to=1-7]
	\arrow[shift left, from=1-9, to=3-9]
	\arrow[shift right, from=1-9, to=3-9]
	\arrow[shift right, from=3-1, to=3-3]
	\arrow[shift left, from=3-1, to=3-3]
	\arrow["{l_0}"', from=3-5, to=3-3]
	\arrow["{r_0}", two heads, from=3-5, to=3-7]
	\arrow[shift left, no head, from=3-9, to=3-7]
	\arrow[shift right, no head, from=3-9, to=3-7]
\end{tikzcd}\]
\end{example}

To fix notation, we write out the corresponding definition of a vertical generalized morphism. 

\begin{definition}
    A \textbf{vertical generalized morphism} (or \textbf{(0,1)-morphism}) $C:G \vermor H$ of double groupoids $G,H$ is a Lie groupoid $C$ formed by a pair of compatible HS bibundles $C_1$ and $C_0$, between $G_{1\bullet}$ and $H_{1\bullet}$ and between $G_{0\bullet}$ and $H_{0\bullet}$, respectively.
\end{definition}

    Therefore, a (0,1)-morphism consists of 
    \begin{itemize}
        \item a Lie groupoid $C$ with two moment maps (groupoid morphisms) 
        $$l: C \to H_{\bullet 0}, \qquad r: C \to G_{\bullet 0},$$
        \item a right vertical $G$-action on $r: C \to G_{\bullet 0}$,
        \item a left vertical $H$-action on $l: C \to H_{\bullet 0}$,
    \end{itemize}
    such that: 
    \begin{itemize}
        \item The actions commute.
        \item The left action is principal with quotient map  $r: C \to G_{\bullet 0}$ (which is thus a levelwise surjective submersion).
    \end{itemize}
    We represent this data analogously to \eqref{diag:hor-gen-mor} as in the vertical sides of \eqref{diag:1-1-morphism}. 

    Vertical generalized morphisms can be composed in an analogous way as horizontal generalized morphisms, and admit similar \textbf{(0,2)-morphisms} between them, consisting of biequivariant Lie groupoid morphisms.

\begin{definition}\label{def:1-1-morphism}
    A \textbf{(1,1)-morphism} $(\Omega,A^{JG},B_J^K,C_G^H,D^{KH})$ of double Lie groupoids consists of the following: \begin{itemize}
        \item Two Lie groupoids $A$ and $D$ which are horizontal generalized morphisms of double Lie groupoids
        \begin{equation*}
            A:G\hormor J, \qquad D:H\hormor K.
        \end{equation*}
        \item Two Lie groupoids $C$ and $B$ which are vertical generalized morphisms of double Lie groupoids 
        \begin{equation*}
            C:G\vermor H, \qquad B:J\vermor K.
        \end{equation*}
        \item A smooth manifold $\Omega$, which is simultaneously a HS morphism of Lie groupoids $C\dashrightarrow B$ and $A\dashrightarrow D$, such that all four actions commute, in the following sense:
\[ (k\cdot_h d)\cdot_v(b\cdot_h\omega)
  =(k\cdot_v b)\cdot_h(d\cdot_v\omega),\qquad k\in K_{11},\quad d\in D_1,\quad b\in B_1,\quad\omega\in\Omega; \] 
  whenever the corresponding actions are defined, and similar equations for the other actions.  
  \end{itemize}
\end{definition}

We represent (1,1)-morphisms as in the following diagram, where the double headed maps are surjective submersions by the principality conditions.
    
\begin{equation}\label{diag:1-1-morphism}
\begin{tikzcd}[ampersand replacement=\&,sep=small]
	{K_{11}} \& {K_{01}} \& {D_1} \& {H_{01}} \& {H_{11}} \\
	{K_{10}} \& {K_{00}} \& {D_0} \& {H_{00}} \& {H_{10}} \\
	{B_1} \& {B_0} \& \Omega \& {C_0} \& {C_1} \\
	{J_{10}} \& {J_{00}} \& {A_0} \& {G_{00}} \& {G_{10}} \\
	{J_{11}} \& {J_{01}} \& {A_1} \& {G_{01}} \& {G_{11}}
	\arrow[shift left, from=1-1, to=1-2]
	\arrow[shift right, from=1-1, to=1-2]
	\arrow[shift left, from=1-1, to=2-1]
	\arrow[shift right, from=1-1, to=2-1]
	\arrow[shift left, from=1-2, to=2-2]
	\arrow[shift right, from=1-2, to=2-2]
	\arrow[from=1-3, to=1-2]
	\arrow[two heads, from=1-3, to=1-4]
	\arrow[shift left, from=1-3, to=2-3]
	\arrow[shift right, from=1-3, to=2-3]
	\arrow[shift left, from=1-4, to=2-4]
	\arrow[shift right, from=1-4, to=2-4]
	\arrow[shift left, from=1-5, to=1-4]
	\arrow[shift right, from=1-5, to=1-4]
	\arrow[shift left, from=1-5, to=2-5]
	\arrow[shift right, from=1-5, to=2-5]
	\arrow[shift left, from=2-1, to=2-2]
	\arrow[shift right, from=2-1, to=2-2]
	\arrow[from=2-3, to=2-2]
	\arrow[two heads, from=2-3, to=2-4]
	\arrow[shift left, from=2-5, to=2-4]
	\arrow[shift right, from=2-5, to=2-4]
	\arrow[from=3-1, to=2-1]
	\arrow[shift right, from=3-1, to=3-2]
	\arrow[shift left, from=3-1, to=3-2]
	\arrow[two heads, from=3-1, to=4-1]
	\arrow[from=3-2, to=2-2]
	\arrow[two heads, from=3-2, to=4-2]
	\arrow[from=3-3, to=2-3]
	\arrow[from=3-3, to=3-2]
	\arrow[two heads, from=3-3, to=3-4]
	\arrow[two heads, from=3-3, to=4-3]
	\arrow[from=3-4, to=2-4]
	\arrow[two heads, from=3-4, to=4-4]
	\arrow[from=3-5, to=2-5]
	\arrow[shift right, from=3-5, to=3-4]
	\arrow[shift left, from=3-5, to=3-4]
	\arrow[two heads, from=3-5, to=4-5]
	\arrow[shift left, from=4-1, to=4-2]
	\arrow[shift right, from=4-1, to=4-2]
	\arrow[from=4-3, to=4-2]
	\arrow[two heads, from=4-3, to=4-4]
	\arrow[shift left, from=4-5, to=4-4]
	\arrow[shift right, from=4-5, to=4-4]
	\arrow[shift right, from=5-1, to=4-1]
	\arrow[shift left, from=5-1, to=4-1]
	\arrow[shift left, from=5-1, to=5-2]
	\arrow[shift right, from=5-1, to=5-2]
	\arrow[shift right, from=5-2, to=4-2]
	\arrow[shift left, from=5-2, to=4-2]
	\arrow[shift left, from=5-3, to=4-3]
	\arrow[shift right, from=5-3, to=4-3]
	\arrow[from=5-3, to=5-2]
	\arrow[two heads, from=5-3, to=5-4]
	\arrow[shift right, from=5-4, to=4-4]
	\arrow[shift left, from=5-4, to=4-4]
	\arrow[shift right, from=5-5, to=4-5]
	\arrow[shift left, from=5-5, to=4-5]
	\arrow[shift left, from=5-5, to=5-4]
	\arrow[shift right, from=5-5, to=5-4]
\end{tikzcd}
\end{equation}
We depict the elements of the spaces forming a (1,1)-morphism as in Figure \ref{fig:1-1-morphism-4-colors}.

\begin{figure}[!h]
    \centering
    \includegraphics[width=0.7\linewidth]{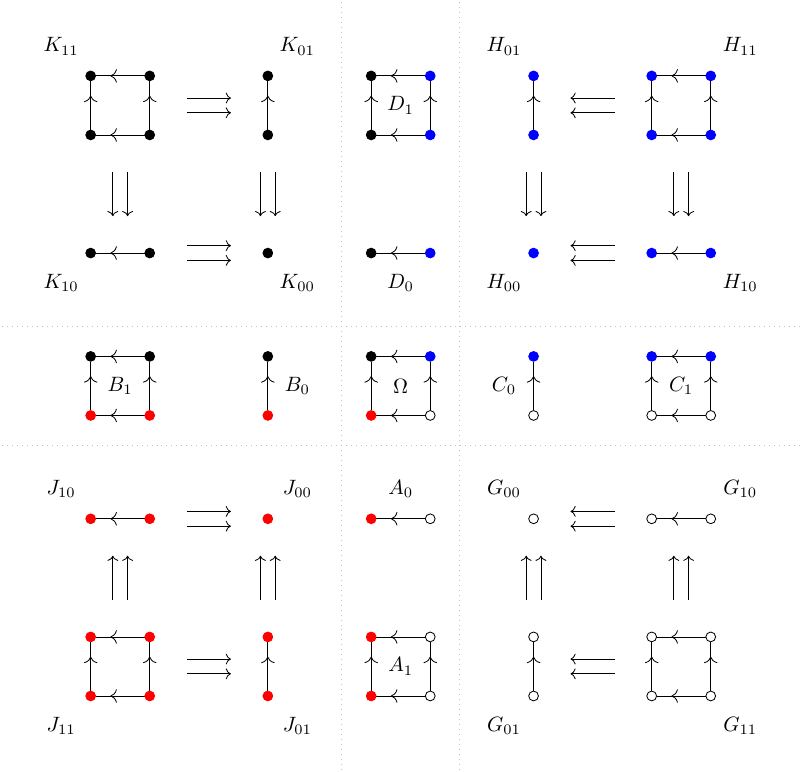}
    \caption{Pictorial representation of the elements of a (1,1)-morphism of double groupoids.}
    \label{fig:1-1-morphism-4-colors}
\end{figure}

At this point, for the sake of having a double bicategory, we can define (1,2)-, (2,1)- and (2,2)-morphisms in the shape of the (1,2)-, (2,1)-, and (2,2)-morphisms in Figure \ref{fig:double-bicat-anafunctors}, respectively, by using biequivariant Lie groupoid morphism between the sides and ``quadriequivariant'' maps in the center accordingly. 

\begin{definition}\label{def:double-bicat-Bun-double-LieGpd}
    The double bicategory $\mathsf{Bun}(\mathsf{LieGpd^2})$ is the double bicategory consisting of double Lie groupoids, (1,0)-, (0,1)-, (1,1)-, (2,0)-, (0,2)-morphisms as defined above and the corresponding (2,1), (1,2) and (2,2)-morphisms as in Figure \ref{fig:double-bicat-anafunctors}, where the red maps are bi- or quadri-equivariant maps accordingly. The full sub-double-bicategory of full double Lie groupoids with the same morphisms will be denoted by $\mathsf{Bun}(\mathsf{LieGpd^2_{full}})$. 
\end{definition}

\subsection{From principal bibundles to anafunctors}

We begin by defining horizontal and vertical hypercovers and anafunctors. Conceptually, these can be thought of as the 1-morphisms of the anafunctor localization of internal Lie groupoids in the category of Lie groupoids, where double groupoids are considered as such in two different directions: alternatively as internal vertical or horizontal Lie groupoids in the category of horizontal or vertical Lie groupoids, respectively. 
Our goal is to arrive at a double bicategory of anafunctors and a biaction groupoid construction for double Lie groupoids, by expanding on \cite[Remark 2.16]{AGJ2024}.

\begin{definition}\label{def:hor-lvlwise-hc}
    Let $f: G \to H$ be a map of double groupoids. We say $f$ is a \textbf{horizontal levelwise hypercover} if the groupoid morphisms 
    \begin{equation*}
    f_{\bullet 0}: 
    \begin{tikzcd}[ampersand replacement=\&,column sep=scriptsize]
	{G_{10}} \& {G_{00}} \\
	{H_{10}} \& {H_{00}}
	\arrow[shift left, from=1-1, to=1-2]
	\arrow[shift right, from=1-1, to=1-2]
	\arrow["{f_{10}}", from=1-1, to=2-1]
	\arrow["{f_{00}}", from=1-2, to=2-2]
	\arrow[shift left, from=2-1, to=2-2]
	\arrow[shift right, from=2-1, to=2-2]
    \end{tikzcd}
    \quad \text{ and } \quad
    f_{\bullet 1}:
    \begin{tikzcd}[ampersand replacement=\&,column sep=scriptsize]
	{G_{11}} \& {G_{01}} \\
	{H_{11}} \& {H_{01}}
	\arrow[shift left, from=1-1, to=1-2]
	\arrow[shift right, from=1-1, to=1-2]
	\arrow["{f_{11}}", from=1-1, to=2-1]
	\arrow["{f_{01}}", from=1-2, to=2-2]
	\arrow[shift left, from=2-1, to=2-2]
	\arrow[shift right, from=2-1, to=2-2]
\end{tikzcd}
    \end{equation*}
    are both hypercovers. 
    Explicitly this means the following conditions hold:
    \begin{equation}\label{eq:hor-lvlwise-hc-conditions}
    \begin{array}{l}
         \Acyc(0)_{\bullet 0}: f_{00}: G_{00} \to H_{00} \text{ is a surjective submersion, } \\
         \Acyc(0)_{\bullet 1}: f_{01}: G_{01} \to H_{01} \text{ is a surjective submersion, }  \\
         \Acyc!(1)_{\bullet 0}: (\partial, f_{10}): G_{10} \overset{\cong}{\to} (G_{00}  \times G_{00})_{H_{00} \times H_{00}} H_{10} \text{ is an isomorphism,}\\
         \Acyc!(1)_{\bullet 1}: (\partial, f_{11}): G_{11} \overset{\cong}{\to} (G_{01}  \times G_{01})_{H_{01} \times H_{01}} H_{11} \text{ is an isomorphism.}
    \end{array}
    \end{equation}
	A span of double Lie groupoids in which the domain-side leg is a horizontal hypercover is called a \textbf{horizontal anafunctor}. 
\end{definition}

\begin{remark}\label{rem:pullback-hor-hypercover}
    Horizontal hypercovers satisfy the same properties as usual hypercovers in \ref{rem:hypercover-properties}; namely, they contain isomorphisms, they are closed under composition, and given a diagram of double groupoids $A \overset{\sim_h}{\twoheadrightarrow} G \leftarrow B$ where $A \overset{\sim_h}{\twoheadrightarrow} G$ is a horizontal hypercover, the pullback exists and $A\times_G B \overset{\sim_h}{\twoheadrightarrow} B$ is a horizontal hypercover as well. These properties follow by a straightforward verification.
	Because of this fact, composition of horizontal anafunctors is well-defined, associative, and has units up to natural isomorphism. 
\end{remark}

We define \textbf{vertical levelwise hypercovers} and \textbf{vertical anafunctors} by applying the principle of vertical/horizontal duality (Remark \ref{rem:vert-hor-duality}) to Definition \ref{def:hor-lvlwise-hc}. 

\begin{remark}\label{rem:hor-vert-hypercov-properties}
    As for horizontal hypercovers (Remark \ref{rem:pullback-hor-hypercover}), we have that vertical hypercovers are also closed under composition and pullback (which exists).
	Hence, composition of vertical anafunctors is also well-defined, associative, and has units up to natural isomorphism. We also note that, a priori, the composition of a horizontal hypercover with a vertical one has neither of these properties.
\end{remark}

\begin{definition}
	A \textbf{double  anafunctor} of double Lie groupoids is a commutative diagram (i.e. a double span)
\begin{equation*}
\begin{tikzcd}[ampersand replacement=\&,cramped]
	K \& D \& H \\
	B \& \Omega \& C \\
	J \& A \& G
	\arrow[from=1-2, to=1-1]
	\arrow["{\sim_h}", two heads, from=1-2, to=1-3]
	\arrow[from=2-1, to=1-1]
	\arrow["{\sim_v}", two heads, from=2-1, to=3-1]
	\arrow[from=2-2, to=1-2]
	\arrow[from=2-2, to=2-1]
	\arrow["{\sim_h}", two heads, from=2-2, to=2-3]
	\arrow["{\sim_v}", two heads, from=2-2, to=3-2]
	\arrow[from=2-3, to=1-3]
	\arrow["{\sim_v}", two heads, from=2-3, to=3-3]
	\arrow[from=3-2, to=3-1]
	\arrow["{\sim_h}", two heads, from=3-2, to=3-3]
\end{tikzcd}
\end{equation*}
    where each of the spaces is a double Lie groupoid and each horizontal (resp. vertical) span is a horizontal (resp. vertical) anafunctor. 
\end{definition}

By applying the construction of Section \ref{sec:Bun-to-Ana-LieGpd} to horizontal generalized morphisms of double groupoids in a compatible way, we convert them into a span of double groupoids. We construct a double groupoid $\hortilde{A}$ by considering the \textbf{horizontal biaction (double) groupoid} relative to the bibundle actions (see Figure \ref{fig:biaction-double-groupoid}).
I.e. we define
\begin{equation}\label{eqn:hor-double-biaction}
     \begin{array}{ll}
    \hortilde{A} _{11}:= J_{11}\times_{s_{\bullet 1},J_{01},l_1}A_1\times_{r_1,G_{01},t_{\bullet 1}}G_{11}, &\hortilde{A}_{01}:=A_1,\\
    \hortilde{A} _{10}:= J_{10}\times_{s,J_{00},l_0}A_0\times_{r_0,G_{00},t}G_{10}, &\hortilde{A} _{00}:=A_0,
    \end{array}
\end{equation}
with structure maps
\begin{equation*}
    \begin{split}
    s_{\bullet 1}, t_{\bullet 1}: \qquad &J_{11}\times_{s_{\bullet 1},J_{01},l_1}A_1\times_{r_1,G_{01},t_{\bullet 1}}G_{11} \longrightarrow A_1\\
    &\quad s_{\bullet1}(j,a,g) = a \cdot g, \qquad t_{\bullet 1}(j,a,g) = j \cdot a,\\
    s_{1 \bullet}, t_{1 \bullet}:  \qquad &J_{11}\times_{s_{\bullet 1},J_{01},l_1}A_1\times_{r_1,G_{01},t_{\bullet 1}}G_{11} \longrightarrow J_{10}\times_{s,J_{00},l_0}A_0\times_{r_0,G_{00},t}G_{10}\\
    &\quad s_{1 \bullet}(j,a,g) = (s_{1\bullet}j, sa,s_{1 \bullet}g), \qquad t_{1 \bullet}(j,a,g) = (t_{1 \bullet}j, ta, t_{1\bullet} g),\\
    \end{split}
\end{equation*}
and the remaining ones defined in the only possible compatible way. The multiplications, whenever $(j', a', g')$ and $(j,a,g)$ are horizontally, resp. vertically composable, are defined by
\begin{equation*}
    \begin{split}
    &(j',a',g')\Join (j,a,g) = (j'\Join j, j^{-1}\cdot a', g'\Join g) =  (j'\Join j, a\cdot (g')^{-1}, g'\Join g), \\
    &(j',a',g')\vJoin (j,a,g) = (j'\vJoin j, a' a , g' \vJoin g).
    \end{split}
\end{equation*}

\begin{figure}[!h]
    \centering
    \includegraphics[width=\linewidth]{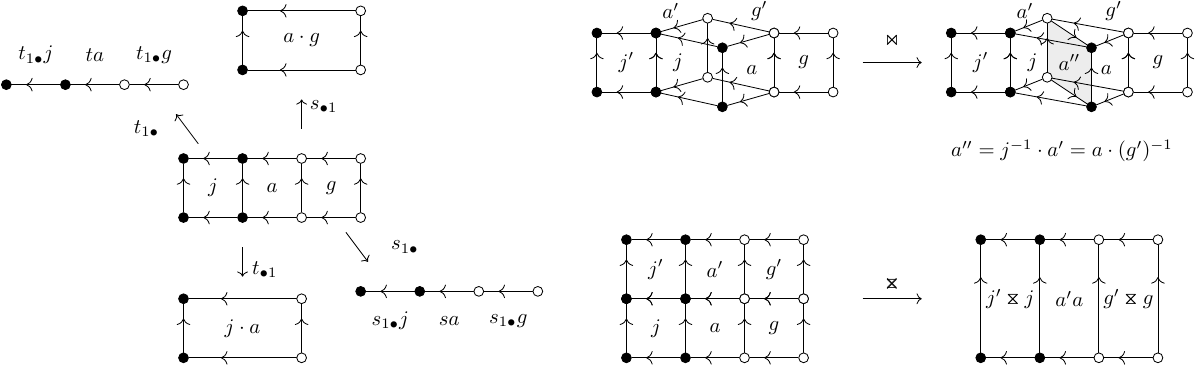}
    \caption{Sources, targets and multiplications of the biaction double groupoid $\hortilde{A}$.}
    \label{fig:biaction-double-groupoid}
\end{figure}

The double groupoid $\hortilde{A}$ comes equipped with natural maps $J\overset{\widetilde{l}}{\leftarrow} \hortilde{A}\overset{\widetilde{r}}{\rightarrow} G$, induced by $l,r$ and the projection maps. As we summarize in the next lemma, this span is a horizontal anafunctor of double Lie groupoids, in the sense that the right map is a horizontal levelwise hypercover. 
    
\begin{lemma}\label{lem:HorGenMorphToHorAna}
    Let $A: G\hormor J$ be a horizontal generalized morphism of double Lie groupoids. Then, $\hortilde{A}$ is a double Lie groupoid and $\hortilde{A}\overset{\sim_h}{\twoheadrightarrow} G$ is a horizontal levelwise hypercover of double Lie groupoids. Additionally, if both $G$ and $J$ are full, then so is $\hortilde{A}$. Thus, the biaction groupoid construction turns horizontal generalized morphisms to horizontal anafunctors of double Lie groupoids. 
\end{lemma}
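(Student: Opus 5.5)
The proof has three parts: the double groupoid structure, the hypercover property, and fullness.

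\textbf{Structure.} I would check the two directions of $\hortilde{A}$ separately. The horizontal groupoids $\hortilde{A}_{\bullet 0}$ and $\hortilde{A}_{\bullet 1}$ are exactly the biaction groupoids of \S\ref{sec:Bun-to-Ana-LieGpd}, built from the HS bibundles $A_0: G_{\bullet 0}\dashrightarrow J_{\bullet 0}$ and $A_1: G_{\bullet 1}\dashrightarrow J_{\bullet 1}$. So they are Lie groupoids by \eqref{eq:biaction-groupoid}. Smoothness of the fibre products holds because $l_k$ and $r_k$ are submersions: $r_k$ is a quotient map, and $l_k$ is a submersion because principality forces the action of $J$ to be free with submersive moment map. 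Alternatively, one can note that $t_{\bullet 1}$ and $t$ are submersions.

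The vertical groupoid $\hortilde{A}_{1\bullet}$ is the fibre product of Lie groupoids
\[
J_{1\bullet}\times_{J_{0\bullet}} A\times_{G_{0\bullet}} G_{1\bullet},
\]
taken along the groupoid morphisms $s_{\bullet 1}$, $l$, $r$, $t_{\bullet 1}$. It is a Lie groupoid because $r$ and $t_{\bullet 1}$ are levelwise surjective submersions, so each fibre product is transversal. The vertical structure of $\hortilde{A}_{0\bullet}=A$ is just $A$ itself.

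Compatibility of the horizontal and vertical structures then reduces to two facts. First, the actions are morphic: they are Lie groupoid morphisms, as in the definition of horizontal action. Second, the interchange law holds in $J$ and in $G$. For instance, the formula for $\Join$ uses $j^{-1}\cdot a'$, and this quantity is vertically multiplicative because the action map $J_{11}\times A_1\to A_1$ is a groupoid morphism over $J_{10}\times A_0\to A_0$. I would verify the interchange law on triples componentwise.

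\textbf{Hypercover.} By \eqref{eq:hor-lvlwise-hc-conditions}, being a horizontal levelwise hypercover is exactly the statement that $\widetilde r_{\bullet 0}$ and $\widetilde r_{\bullet 1}$ are hypercovers of Lie groupoids. For each level $k=0,1$ this is the Lie groupoid case already recalled after \eqref{eq:biaction-groupoid}, applied to the HS bibundle $A_k$.

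\begin{itemize}
\item $\Acyc(0)$: the map $r_k:A_k\to G_{0k}$ is a surjective submersion.
\item $\Acyc!(1)$: the map $(j,a,g)\mapsto (j\cdot a,\, a\cdot g,\, g)$ is a diffeomorphism onto $(A_k\times A_k)\times_{G_{0k}\times G_{0k}} G_{1k}$. Given $(a',a,g)$ with $r(a')=t(g)$-compatible data, principality supplies a unique $j$, depending smoothly on the input, with $j\cdot(a\cdot g^{-1})=a'$.
\end{itemize}

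\textbf{Fullness.} This is the step needing the most care. I must show that the double source map of $\hortilde{A}$,
\[
(j,a,g)\longmapsto \bigl((s_{1\bullet}j,\ s a,\ s_{1\bullet}g),\ a\cdot g\bigr),
\]
is surjective onto
\[
\hortilde{A}_{10}\times_{A_0} A_1 \;=\; \{\,((j_0,a_0,g_0),\,a_1)\ :\ s(a_1)=a_0\cdot g_0\,\}.
\]
The plan is as follows.
\begin{enumerate}
\item Use fullness of $G$, in the form of surjectivity of $(s_{1\bullet},s_{\bullet 1})$, to choose a square $g$ with bottom $g_0$ and right side $r_1(a_1)$. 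Compatibility holds because $s(r_1 a_1)=r_0(s a_1)=r_0(a_0 g_0)=s(g_0)$.
\item Set $a:=a_1\cdot g^{-1}$, so that $a\cdot g=a_1$ and $s a=a_0$.
\item Use fullness of $J$, via the variant with $(s_{\bullet 1},s_{1\bullet})$, to choose $j$ with bottom $j_0$ and right side $l_1(a)$.
\end{enumerate}
The main obstacle is checking the corner compatibilities, that is, that the prescribed edges of $j$ and $g$ actually share endpoints. This follows from equivariance of the moment maps. Submersivity is automatic for a double Lie groupoid, so only surjectivity must be shown.

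Combining the three parts yields the horizontal anafunctor $J\overset{\widetilde l}{\leftarrow}\hortilde{A}\overset{\sim_h}{\twoheadrightarrow}G$.
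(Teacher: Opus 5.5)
Your argument is correct in substance and has the same three-part structure as the paper's. In the fullness step, your recipe is exactly the paper's diffeomorphism $(\zeta,\beta,\gamma)\mapsto(\zeta,\beta\cdot\gamma,\gamma)$ followed by its two pullback squares, read pointwise: fill a square of $G$ with bottom $g_0$ and right side $r_1(a_1)$, put $a=a_1\cdot g^{-1}$, then fill a square of $J$ with bottom $j_0$ and right side $l_1(a)$.

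The real difference is how submersivity is obtained. The paper writes the double source of $\hortilde{A}$ as a composite of pullbacks of the double source maps of $J$ and $G$, so surjectivity and submersivity come out together. You prove only surjectivity and invoke the remark in the paper's definition that the double source map of a double Lie groupoid is automatically a submersion. That remark is true. At a horizontal unit $1_h(\ell)$, the differential of $s_{1\bullet}$ is block diagonal with respect to unit directions and horizontal source fibres, so it maps $\ker ds_{\bullet 1}$ onto $\ker ds$. Horizontal right translation, which $s_{1\bullet}$ intertwines, carries this to every square.

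However, applying the remark presupposes that $\hortilde{A}$ is already a double Lie groupoid, in particular that its vertical source $(j,a,g)\mapsto(s_{1\bullet}j,\,sa,\,s_{1\bullet}g)$ is a submersion. For $\hortilde{A}_{1\bullet}$ you cite only levelwise transversality, which makes the levels manifolds but says nothing about the source map. Its submersivity comes from the double source maps of $J$ and $G$ being submersions, via essentially the paper's factorization. If you prefer to stay elementwise, choose local sections of the double sources of $G$ and $J$ through the components of a given $(j,a,g)$. Your recipe then becomes a smooth local section of the double source of $\hortilde{A}$ through that point, which gives submersivity directly.

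One side claim should be dropped: principality does not make $l_k$ a submersion. Take $G$ and $J$ the trivial double groupoids on manifolds $M$ and $N$, and $A$ the unit groupoid of $M$. Then the $J$-action is principal with $r=\mathrm{id}$, and $l$ can be any smooth map $M\to N$. The fibre products defining $\hortilde{A}$ are smooth because $s_{\bullet 1},s$ on the $J$-side and $t_{\bullet 1},t$ on the $G$-side are submersions. This is essentially the alternative justification you already mention.
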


\begin{proof}
    It is straightforward to check that $\hortilde{A}$ is a double Lie groupoid. 
    Additionally, the fact that $\widetilde{r}$ is a horizontal levelwise hypercover follows from the theory of HS morphisms of Lie groupoids: $\widetilde{r}_{\bullet 0}$ and $\widetilde{r}_{\bullet 1}$ are the hypercovers obtained when applying the biaction groupoid construction to the HS bibundles $A_1$ and $A_0$. Thus $\widetilde{r}$ is a horizontal levelwise hypercover.
    
    Assuming now that $G$ and $J$ are both full, we prove that the double source map $(s_{\bullet 1}, s_{1 \bullet})$ of $\hortilde{A}$ is a surjective submersion.
    To begin with, we observe that
    \begin{equation*}
    \begin{split}
        \hortilde{A}_{01} \times_{s,\hortilde{A}_{00},s} \hortilde{A}_{10} 
        &= (J_{10} \times_{s,J_{00},l_0} A_0 \times_{r_0, G_{00}, t} G_{10}) \times_{s,A_0,s} A_1\\
        &\cong J_{10} \times_{s,J_{00},l_0s} A_1 \times_{r_0s, G_{00}, s} G_{10} 
    \end{split}
    \end{equation*}
    through the diffeomorphism $(j_0,a_0, g_0, \alpha) \mapsto (j_0, \alpha, g_0)$, with inverse $(j_0, \alpha, g_0) \mapsto (j_0,s\alpha\cdot g_0^{-1}, g_0, \alpha)$, since $s\alpha = a_0\cdot g_0$ by definition of the source maps of $\hortilde{A}$.
    Analogously, we have a diffeomorphism
    \begin{equation*}
    \begin{split}
        \hortilde{A}_{11} 
        = J_{11}\times_{s_{\bullet 1},J_{01},l_1}A_1\times_{r_1,G_{01},t_{\bullet 1}}G_{11}&\overset{\cong}{\longleftrightarrow} J_{11}\times_{s_{\bullet 1},J_{01},l_1}A_1\times_{r_1,G_{01},s_{\bullet 1}}G_{11}\\
        (\zeta, \beta, \gamma) &\longmapsto (\zeta, \beta\cdot \gamma, \gamma),\\
        (\zeta, \alpha\cdot \gamma^{-1_h}, \gamma) &\longmapsfrom  (\zeta, \alpha, \gamma),
    \end{split}
    \end{equation*}
    where we denote by $\gamma^{-1_h}$ the horizontal inverse of $\gamma$, and we have $r_1\beta = t_{\bullet 1}\gamma$, $r_1\alpha = s_{\bullet 1} \gamma$. Hence, we can rewrite the double source map as 
    \begin{equation*}
        \begin{split}
            J_{11}\times_{s_{\bullet 1},J_{01},l_1}A_1\times_{r_1,G_{01},s_{\bullet 1}}G_{11} &\longrightarrow J_{10} \times_{s,J_{00},l_0s} A_1 \times_{r_0s, G_{00}, s} G_{10}\\
            (\zeta, \alpha, \gamma) &\longmapsto (s_{1\bullet}\zeta, \alpha, s_{1\bullet}\gamma)
        \end{split}
    \end{equation*}
    This is a surjective submersion because it factors through the top left maps of the following pullback diagrams, which are surjective submersions as they are pullbacks of the double sources of $G$ and $J$, respectively. 
\[\begin{tikzcd}[ampersand replacement=\&,cramped,sep=scriptsize]
	{J_{11}\times_{J_{01}} A_1 \times_{G_{00},s} G_{10}} \& {J_{10}\times_{J_{00}} A_1 \times_{G_{00},s} G_{10}} \& {A_1\times_{G_{00},s}G_{10}} \\
	\\
	{J_{11}} \& {J_{10}\times_{s, J_{00}, s}J_{01}} \& {J_{01}}
	\arrow[two heads, from=1-1, to=1-2]
	\arrow[from=1-1, to=3-1]
	\arrow["\lrcorner"{anchor=center, pos=0.125}, draw=none, from=1-1, to=3-2]
	\arrow[from=1-2, to=1-3]
	\arrow[from=1-2, to=3-2]
	\arrow["\lrcorner"{anchor=center, pos=0.125}, draw=none, from=1-2, to=3-3]
	\arrow["{l_1\circ pr_1}", from=1-3, to=3-3]
	\arrow["{(s_{1\bullet}, s_{\bullet 1})}"', two heads, from=3-1, to=3-2]
	\arrow[from=3-2, to=3-3]
\end{tikzcd}\]

\[\begin{tikzcd}[ampersand replacement=\&,cramped,sep=scriptsize]
	{J_{11}\times_{J_{01}} A_1 \times_{G_{01},s_{\bullet 1}} G_{11}} \& {J_{11}\times_{J_{01}} A_1 \times_{G_{00},s} G_{10}} \& {J_{11}\times_{J_{01}} A_1 } \\
	\\
	{G_{11}} \& {G_{10}\times_{s, G_{00}, s}G_{01}} \& {G_{01}}
	\arrow[two heads, from=1-1, to=1-2]
	\arrow[from=1-1, to=3-1]
	\arrow["\lrcorner"{anchor=center, pos=0.125}, draw=none, from=1-1, to=3-2]
	\arrow[from=1-2, to=1-3]
	\arrow[from=1-2, to=3-2]
	\arrow["\lrcorner"{anchor=center, pos=0.125}, draw=none, from=1-2, to=3-3]
	\arrow["{r_1\circ pr_2}", from=1-3, to=3-3]
	\arrow["{(s_{1\bullet}, s_{\bullet 1})}"', two heads, from=3-1, to=3-2]
	\arrow[from=3-2, to=3-3]
\end{tikzcd}\]
\end{proof}

Analogously, vertical generalized morphisms can be converted into spans of double Lie groupoids as follows. We construct a double Lie groupoid $\vertilde{C}$ by considering the \textbf{vertical biaction (double) groupoid} relative to the bibundle actions.
I.e. we define
\begin{equation}\label{eqn:ver-double-biaction}
    \begin{array}{ll}
    \vertilde{C} _{11}:= H_{11}\times_{s_{1\bullet },H_{10},l_1}C_1\times_{r_1,G_{10},t_{1 \bullet}}G_{11}, &\vertilde{C}_{01}:= H_{01}\times_{s,H_{00},l_0}C_0\times_{r_0,G_{00},t}G_{01},\\
    \vertilde{C} _{10}:= C_1, &\vertilde{C} _{00}:=C_0,
    \end{array}
\end{equation}
with structure maps
\begin{equation*}
    \begin{split}
    s_{\bullet 1}, t_{\bullet 1}: \qquad &H_{11}\times_{s_{1\bullet },H_{10},l_1}C_1\times_{r_1,G_{10},t_{1 \bullet}}G_{11} \to H_{01}\times_{s,H_{00},l_0}C_0\times_{r_0,G_{00},t}G_{01}\\
    &\quad s_{\bullet 1}(h,c,g) = (s_{\bullet 1}h, sc, s_{\bullet 1}g), \qquad t_{\bullet 1}(h,c,g) = (t_{\bullet 1}h, tc, t_{\bullet 1} g),\\
    s_{1 \bullet}, t_{1 \bullet}:  \qquad &H_{11}\times_{s_{1\bullet },H_{10},l_1}C_1\times_{r_1,G_{10},t_{1 \bullet}}G_{11} \to C_1\\
    &\quad s_{1 \bullet}(h,c,g) = c\cdot g, \qquad t_{1 \bullet}(h,c,g) = h\cdot c,\\
    \end{split}
\end{equation*}
and the remaining ones defined in the only possible compatible way. 
A depiction of these structure maps would be a version of Figure \ref{fig:biaction-double-groupoid} with vertical and horizontal structures swapped.
The double groupoid $\vertilde{C}$ comes equipped with natural maps $G\overset{\widetilde{r}}{\leftarrow} \vertilde{C}\overset{\widetilde{l}}{\rightarrow} H$, induced by $l,r$ and the projection maps.

The following result holds, by vertical/horizontal duality, with analogous proofs to its horizontal version above.

\begin{lemma}\label{lem:VerGenMorphToVerAna}
    Let $C: G\vermor H$ be a vertical generalized morphism of double Lie groupoids. Then, $\vertilde{C}$ is a double Lie groupoid and the map $\widetilde{r}: \vertilde{C} \overset{\sim_v}{\twoheadrightarrow} G$ is a vertical levelwise hypercover. Additionally, if both $G$ and $H$ are full, then so is $\vertilde{C}$. Thus, the biaction groupoid construction turns vertical generalized morphisms to vertical anafunctors of double Lie groupoids. 
\end{lemma}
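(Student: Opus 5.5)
The argument mirrors the proof of Lemma~\ref{lem:HorGenMorphToHorAna}, with the two directions exchanged as in Remark~\ref{rem:vert-hor-duality}.

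\emph{Double Lie groupoid structure.} We verify directly that $\vertilde{C}$ is a double Lie groupoid.
\begin{itemize}
\item The vertical groupoid $\vertilde{C}_{1\bullet}$, i.e.\ $\vertilde{C}_{11}\rightrightarrows C_1$, is the biaction groupoid of the HS bibundle $C_1$ between $G_{1\bullet}$ and $H_{1\bullet}$.
\item The vertical groupoid $\vertilde{C}_{0\bullet}$, i.e.\ $\vertilde{C}_{01}\rightrightarrows C_0$, is the biaction groupoid of $C_0$ between $G_{0\bullet}$ and $H_{0\bullet}$.
\item Both are Lie groupoids by \S\ref{sec:Bun-to-Ana-LieGpd}.
\item The horizontal structure maps are induced componentwise from $H$, $C$ and $G$. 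Their compatibility with the vertical ones follows because the actions are morphic (groupoid morphisms) and the two actions commute.
\item Smoothness of the fiber products follows from the source maps of $H_{\bullet 1}$ and $G_{\bullet 1}$ being submersions, and from $r$ being a levelwise surjective submersion.
\end{itemize}

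\emph{Vertical levelwise hypercover.} The map $\widetilde{r}$ restricts to $\widetilde{r}_{1\bullet}\colon \vertilde{C}_{1\bullet}\to G_{1\bullet}$ and $\widetilde{r}_{0\bullet}\colon\vertilde{C}_{0\bullet}\to G_{0\bullet}$. These are exactly the anafunctor legs produced by the biaction groupoid construction applied to the left-principal bibundles $C_1$ and $C_0$. Hence they are hypercovers of Lie groupoids by Proposition~\ref{prop:Bun-to-Ana-LieGpd} and the discussion preceding it. This gives the four conditions dual to \eqref{eq:hor-lvlwise-hc-conditions}.

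\emph{Fullness.} This is the only step needing care. Assume $G$ and $H$ are full.
\begin{itemize}
\item First identify
\[ \vertilde{C}_{10}\times_{s,C_0,s}\vertilde{C}_{01}\cong H_{01}\times_{s,H_{00},l_0 s} C_1\times_{r_0 s,G_{00},s} G_{01} \]
via $(\gamma,h_0,c_0,g_0)\mapsto(h_0,\gamma,g_0)$, using $s\gamma=c_0\cdot g_0$.
\item Reparametrize $\vertilde{C}_{11}\cong H_{11}\times_{s_{1\bullet},H_{10},l_1}C_1\times_{r_1,G_{10},s_{1\bullet}}G_{11}$ via $(\zeta,\beta,\gamma)\mapsto(\zeta,\beta\cdot\gamma,\gamma)$, whose inverse uses the vertical inverse of $\gamma$.
\item In these coordinates the double source becomes $(\zeta,\alpha,\gamma)\mapsto(s_{\bullet 1}\zeta,\alpha,s_{\bullet 1}\gamma)$.
\item This map factors as two maps, each a base change of the double source map of $H$, respectively $G$. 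These are surjective submersions by fullness, and surjective submersions are stable under pullback and composition.
\end{itemize}

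The main obstacle is checking that the reparametrizations are well-defined diffeomorphisms compatible with the moment maps. In particular one needs $r_1(\beta\cdot\gamma)=s_{1\bullet}\gamma$ and $l_0 s$ being the correct anchor, and this is where compatibility of the morphic actions with the horizontal sources is used.
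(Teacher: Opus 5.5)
Your proposal is correct and is the vertical/horizontal dual of the paper's proof of Lemma~\ref{lem:HorGenMorphToHorAna}, which is exactly how the paper justifies this lemma. The hypercover property comes from the Lie groupoid biaction construction applied to $C_1$ and $C_0$. Fullness comes from your identification of the codomain of the double source, your reparametrization of $\vertilde{C}_{11}$, and the factorization of $(\zeta,\alpha,\gamma)\mapsto(s_{\bullet 1}\zeta,\alpha,s_{\bullet 1}\gamma)$ through base changes of the double sources of $H$ and $G$.

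One small slip: the fiber products defining $\vertilde{C}_{11}$ are taken over $H_{10}$ and $G_{10}$ along $s_{1\bullet}$ and $t_{1\bullet}$. The submersions you need there are therefore the structure maps of $H_{1\bullet}$ and $G_{1\bullet}$, not those of $H_{\bullet 1}$ and $G_{\bullet 1}$.
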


We now introduce an additional condition on levelwise hypercovers of double Lie groupoids, which we require both in order to obtain double  anafunctors from (1,1)-morphisms by the action groupoid construction, and also for horizontal and vertical hypercovers to map to hypercovers of Lie 2-groupoids through $\Wbar$ in Section \ref{sec:double to simplicial}. 

\begin{definition}
    We say that a \textit{horizontal} levelwise hypercover $f: G \overset{\sim_h}{\twoheadrightarrow} H$ is \textbf{full} if either one of the maps
    \begin{equation}\label{eq:hor-extra-condition-map}
    \begin{split}
        (s, f_{01}): \quad &G_{01} \longrightarrow G_{00}\times_{f_{00},H_{00},s}H_{01}\\
        (t, f_{01}): \quad &G_{01} \longrightarrow G_{00}\times_{f_{00},H_{00},t}H_{01}
    \end{split}
    \end{equation}
    is a surjective submersion.\footnote{Notably, if one of these maps is a surjective submersion, the other is one as well, by composing with the vertical inversion.} We say a horizontal anafunctor is full if its domain-side leg is full. 

    We say that a \textit{vertical} levelwise hypercover $f: G \overset{\sim_v}{\twoheadrightarrow} H$ is \textbf{full} if either one of the maps
    \begin{equation}\label{eq:ver-extra-condition-map}
    \begin{split}
        (s, f_{10}): \quad &G_{10} \longrightarrow G_{00}\times_{f_{00},H_{00},s}H_{10}\\
        (t, f_{10}): \quad &G_{10} \longrightarrow G_{00}\times_{f_{00},H_{00},t}H_{10}
    \end{split}
    \end{equation}
    is a surjective submersion.\footnote{Cf. the previous footnote, with the vertical inversion replaced by the horizontal one.} We say a vertical anafunctor is full if its domain-side leg is full. 
\end{definition}

The following two dual lemmas state that the levelwise hypercovers coming from biaction groupoids of bibundles (denoted by $\widetilde{r}$ in our convention) inherit the property of being full from the fullness of the double Lie groupoid acting principally on the bibundle itself. 
Figure \ref{fig:fullness-two-colored} depicts these maps for the $\widetilde{r}$ of a horizontal or vertical bibundle.

\begin{remark}\label{rem:extra-cond-as-Kan-cond}
	In terms of Kan conditions for morphisms, the fact that for a horizontal levelwise hypercover $f: G \overset{\sim_h}{\twoheadrightarrow} H$, $(s, f_{01})$ is a surjective submersion corresponds to the $\Kan(1,1)$ condition for the Lie groupoid morphism $f_{0 \bullet}: G_{0 \bullet} \to H_{0 \bullet}$. The analogous condition for $(t, f_{01})$ is $\Kan(1,0)$ for $f_{0\bullet}$. 
	For a vertical levelwise hypercover $f: G \overset{\sim_v}{\twoheadrightarrow} H$, these are the same level 1 Kan conditions for the Lie groupoid morphism $f_{\bullet 0}$.
\end{remark}

\begin{lemma}\label{lem:fullness-implies-extra-cond-horizontal}
    Let $A: G \hormor J$ be a horizontal morphism of double Lie groupoids. If $J$ is full, then $\widetilde{r}$ is full. That is, the maps 
    \begin{equation*} 
    \begin{split}
        (s, r_1): \hortilde{A}_{01} \longrightarrow \hortilde{A}_{00}\times_{r_0,G_{00},s}G_{01}, \\
        (t, r_1): \hortilde{A}_{01} \longrightarrow \hortilde{A}_{00}\times_{r_0,G_{00},t}G_{01}
    \end{split}
    \end{equation*}
    are surjective submersions. 
\end{lemma}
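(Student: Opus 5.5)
The plan is to deduce both properties of $(s,r_1)$ from three ingredients: principality of the left horizontal $J$-action at level $0$, fullness of $J$ (surjectivity of its double source map), and the fact that $r_1\colon A_1\to G_{01}$ is a surjective submersion. We treat $(s,r_1)$ first. The statement for $(t,r_1)$ then follows by precomposing with the inversion of the Lie groupoid $A$: it intertwines $s$ and $t$ and is compatible with the inversion of $G_{0\bullet}$ under $r_1$, exactly as in the footnote to the definition of full hypercovers.

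\emph{Surjectivity.} Let $(a_0,g)\in A_0\times_{r_0,G_{00},s}G_{01}$.
\begin{enumerate}
\item Since $r_1$ is surjective, choose $\alpha\in A_1$ with $r_1\alpha=g$. Then $r_0(s\alpha)=s g=r_0 a_0$.
\item Principality at level $0$ says $J_{10}\times_{s,J_{00},l_0}A_0\cong A_0\times_{r_0,G_{00},r_0}A_0$ via $(j,a)\mapsto(j\cdot a,a)$. Hence there is a unique $j_0\in J_{10}$ with $s j_0=l_0(s\alpha)$ and $j_0\cdot s\alpha=a_0$.
\item We have $s j_0=l_0 s\alpha=s(l_1\alpha)$, so $(j_0,l_1\alpha)\in J_{10}\times_{s,J_{00},s}J_{01}$. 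Fullness of $J$ gives a square $\zeta\in J_{11}$ with $s_{1\bullet}\zeta=j_0$ and $s_{\bullet 1}\zeta=l_1\alpha$.
\item Set $\zeta\cdot\alpha\in A_1$. Because the action is a groupoid morphism, $s(\zeta\cdot\alpha)=s_{1\bullet}\zeta\cdot s\alpha=a_0$. Because $r$ is the quotient map of the $J$-action, $r_1(\zeta\cdot\alpha)=r_1\alpha=g$.
\end{enumerate}
So $(s,r_1)(\zeta\cdot\alpha)=(a_0,g)$.

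\emph{Submersion.} We globalize the same construction. Let
\[
M:=J_{11}\times_{s_{\bullet1},J_{01},l_1}A_1 ,
\]
and let $\mu\colon M\to A_1$ be the action map. Under the principality isomorphism $M\cong A_1\times_{G_{01}}A_1$, the map $\mu$ becomes a projection. That projection is a base change of $r_1$, hence a surjective submersion. It therefore suffices to show that the composite $(s,r_1)\circ\mu$ is a submersion. We factor this composite as follows.
\begin{enumerate}
\item First map $M\to J_{10}\times_{s,J_{00},l_0s}A_1$ by $(\zeta,\alpha)\mapsto(s_{1\bullet}\zeta,\alpha)$. This is the base change of the double source $J_{11}\to J_{10}\times_{J_{00}}J_{01}$ along $l_1$, so it is a surjective submersion because $J$ is full.
\item Then map $(j_0,\alpha)\mapsto(j_0\cdot s\alpha,\,r_1\alpha)$. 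Using the level-$0$ principality isomorphism, identify $J_{10}\times_{J_{00}}A_1$ with $A_0\times_{r_0,G_{00},r_0s}A_1$ via $(j_0,\alpha)\mapsto(j_0\cdot s\alpha,\alpha)$.
\item After this identification, the second map becomes $(a_0,\alpha)\mapsto(a_0,r_1\alpha)$. Since $r_0 s=s\,r_1$, its source is $(A_0\times_{G_{00}}G_{01})\times_{G_{01}}A_1$, and the map is the base change of $r_1$ along the projection to $G_{01}$. Hence it is a surjective submersion.
\end{enumerate}
A composite of surjective submersions is a surjective submersion, so $(s,r_1)\circ\mu$ is one. This gives the submersion property, and in fact recovers surjectivity as well.

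The main obstacle is the bookkeeping in the second step: checking that the identifications by principality are diffeomorphisms compatible with the relevant maps. The key compatibilities are $s(\zeta\cdot\alpha)=s_{1\bullet}\zeta\cdot s\alpha$ and $r_1(\zeta\cdot\alpha)=r_1\alpha$. Both follow from the action being a morphism of Lie groupoids and from $r$ being the quotient map of the principal action.
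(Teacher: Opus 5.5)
Your proposal is correct and follows essentially the same route as the paper. The paper also writes $J_{11}\times_{J_{01}}A_1\cong A_1\times_{G_{01}}A_1$ by level-$1$ principality, so that the action map becomes the surjective submersion $\mathrm{pr}_1$. It then factors $(s,r_1)\circ\mathrm{pr}_1$ through three maps: the base change of the double source of $J$, the level-$0$ principality isomorphism $J_{10}\times_{J_{00}}A_1\cong A_0\times_{G_{00}}A_1$, and the base change of $r_1$. Your extra pointwise surjectivity argument is redundant, and your reduction of $(t,r_1)$ to $(s,r_1)$ via the inversion of $A$ just makes explicit the paper's remark that this case is analogous.
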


\begin{lemma}\label{lem:fullness-implies-extra-cond-vertical}
    Let $C: G \vermor H$ be a vertical morphism of double Lie groupoids. If $H$ is full, then $\widetilde{r}$ is full. That is, the maps 
    \begin{equation*}
    \begin{split}
        (s, r_1): \vertilde{C}_{10} \longrightarrow \vertilde{C}_{00}\times_{r_0,G_{00},s}G_{10}, \\
        (t, r_1): \vertilde{C}_{10} \longrightarrow \vertilde{C}_{00}\times_{r_0,G_{00},t}G_{10}
    \end{split}
    \end{equation*}
    are surjective submersions. 
\end{lemma}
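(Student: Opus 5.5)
The plan mirrors the proof of Lemma~\ref{lem:fullness-implies-extra-cond-horizontal}, with the roles of the two directions exchanged. Recall that in $\vertilde{C}$ we have $\vertilde{C}_{10}=C_1$, $\vertilde{C}_{00}=C_0$ and $\widetilde{r}_{10}=r_1$. So the claim is about the Lie groupoid $C=(C_1\rightrightarrows C_0)$ and the groupoid morphism $r:C\to G_{\bullet 0}$. We must show that
\[
(s,r_1): C_1 \longrightarrow C_0\times_{r_0,G_{00},s}G_{10}
\]
is a surjective submersion. The statement for $(t,r_1)$ then follows by precomposing with the inversion of $C_1$ and postcomposing with the inversion of $G_{10}$. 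Both inversions are diffeomorphisms, since $r$ is a groupoid morphism.

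\emph{Surjectivity.} Let $(c_0,g)$ satisfy $r_0(c_0)=s(g)$.
\begin{enumerate}
\item Choose $c'\in C_1$ with $r_1(c')=g$; this is possible because $r_1$ is a surjective submersion.
\item Since $r_0(s c')=s(g)=r_0(c_0)$, principality of the $H_{01}$-action on $C_0$ gives a unique $h\in H_{01}$ with $c_0=h\cdot s(c')$.
\item The pair $(l_1(c'),h)\in H_{10}\times_{s,H_{00},s}H_{01}$ is composable, because $s(l_1 c')=l_0(s c')=s(h)$.
\item Fullness of $H$ gives a square $\eta\in H_{11}$ with $s_{1\bullet}\eta=l_1(c')$ and $s_{\bullet 1}\eta=h$.
\item Put $c:=\eta\cdot c'$. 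The vertical action is a groupoid morphism whose component on objects is the $H_{01}$-action on $C_0$. Hence $s(c)=h\cdot s(c')=c_0$.
\item The actions commute with $r$, which is constant on $H$-orbits. Hence $r_1(c)=r_1(c')=g$.
\end{enumerate}

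\emph{Submersion.} I will make the above argument smooth by a factorization. Consider the map
\[
\Psi: H_{11}\times_{s_{1\bullet},H_{10},l_1}C_1 \to C_0\times_{G_{00}}G_{10},\qquad \Psi(\eta,c')=\bigl(s_{\bullet 1}\eta\cdot s c',\; r_1 c'\bigr).
\]
By the computation above, $\Psi=(s,r_1)\circ\mathrm{act}$. The action map $\mathrm{act}$ is a surjective submersion onto $C_1$: its composition with the unit section $c'\mapsto(1_{l_1c'},c')$ is the identity. So it suffices to show that $\Psi$ is a surjective submersion. I factor $\Psi$ as follows.
\begin{enumerate}
\item First map $(\eta,c')\mapsto(s_{\bullet 1}\eta,c')\in H_{01}\times_{s,H_{00},l_0 s}C_1$. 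This is the base change of the double source map $(s_{1\bullet},s_{\bullet 1})$ of $H$ along $C_1\to H_{10}$, so it is a surjective submersion by fullness of $H$.
\item Second map $(h,c')\mapsto(h\cdot s c', r_1 c')$. Use the principality isomorphism $H_{01}\times_{H_{00}}C_0\cong C_0\times_{G_{00}}C_0$, $(h,x)\mapsto(h\cdot x,x)$. Under it, this map becomes $(x',c')\mapsto(x',r_1c')$ on $C_0\times_{r_0,G_{00},r_0 s}C_1$. That is a base change of the surjective submersion $r_1:C_1\to G_{10}$ (over $G_{00}$ via $s$), hence again a surjective submersion.
\end{enumerate}

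The main obstacle I expect is purely bookkeeping. One must track which source and target maps of the vertical action double groupoid enter the action morphism, and check that these fibre products are the stated pullbacks, so that they are manifolds and the base-change arguments apply. I will check these against the dual computations in the proof of Lemma~\ref{lem:HorGenMorphToHorAna}.
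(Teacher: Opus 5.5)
Your proof is correct and is essentially the vertical dual of the paper's proof of Lemma~\ref{lem:fullness-implies-extra-cond-horizontal}. It uses the same factorization: base change of the double source of $H$, then the level-$0$ principality isomorphism and base change of $r_1$. Your map $\mathrm{act}$ plays the role of the paper's $pr_1$ under the isomorphism $H_{11}\times_{H_{10}}C_1\cong C_1\times_{G_{10}}C_1$.

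One small precision: a smooth section only shows that $\mathrm{act}$ is surjective (and submersive along the section's image), not a submersion everywhere. That weaker fact is all your reduction needs. The full claim that $\mathrm{act}$ is a surjective submersion follows from the principality isomorphism above.
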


\begin{figure}[h]
	\centering
	\includegraphics[width=0.6\textwidth]{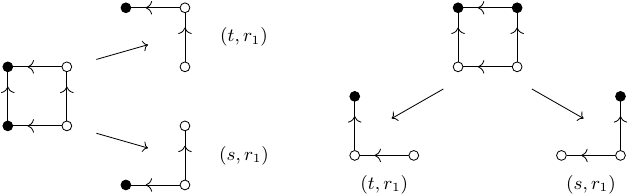}
	\caption{Fullness conditions for $\widetilde{r}$ of a horizontal bibundle on the left, and for $\widetilde{r}$ of a vertical bibundle on the right.}
	\label{fig:fullness-two-colored}
\end{figure}

\begin{proof}[Proof of Lemma \ref{lem:fullness-implies-extra-cond-horizontal}] We show this for $(s, r_1)$, as it is clear from the pictorial representation of the proof in Figure \ref{fig:hor-action-gpd-extra-cond-proof} that the proof for $(t,r_1)$ is analogous. By expanding the definition of $(s, r_1)$ we get
\begin{equation*}
    (s, r_1):  A_1 \longrightarrow A_0\times_{r_0,G_{00},s}G_{01}.
\end{equation*}
  
    \begin{figure}[h]
        \centering
        \includegraphics[width=0.8\linewidth]{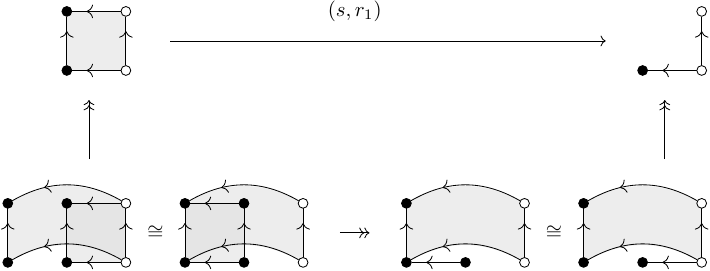}
        \caption{Pictorial proof of Lemma \ref{lem:fullness-implies-extra-cond-horizontal}. Note the use of the principality conditions in Figure \ref{fig:left-hor-action-principality}.}
        \label{fig:hor-action-gpd-extra-cond-proof}
    \end{figure}
    
    We know that $r_1: A_1 \to G_{01}$ is a surjective submersion by Definition \ref{def:hormor}. So, the map 
    \begin{equation*}
       A_0\times_{r_0,G_{00},sr_1} A_1  \longrightarrow A_0\times_{r,G_{00},s}G_{01}
    \end{equation*}
    is a surjective submersion, because of the pullback diagram
    \[\begin{tikzcd}[ampersand replacement=\&,sep=small]
	{A_0\times_{r_0,G_{00},sr_1}A_1} \& {A_0\times_{r_0,G_{00},s}G_{01}} \\
	\\
	{A_1} \& {G_{01}.}
	\arrow[two heads, from=1-1, to=1-2]
	\arrow[from=1-1, to=3-1]
	\arrow["\lrcorner"{anchor=center, pos=0.125}, draw=none, from=1-1, to=3-2]
	\arrow["{pr_2}", from=1-2, to=3-2]
	\arrow["{r_1}"', two heads, from=3-1, to=3-2]
    \end{tikzcd}\]   
    By \eqref{diag:left-principal}, $J_{10} \times_{s, J_{00}, l_0}A_0 \cong A_0 \times_{r_0, G_{00}, r_0} A_0$. So, by the pullback diagram 
    \[\begin{tikzcd}[ampersand replacement=\&,sep=small]
	{J_{10} \times_{s, J_{00}, l_0s}A_1} \& {A_0 \times_{r_0, G_{00}, sr_1} A_1} \\
	\\
	{J_{10} \times_{s, J_{00}, l_0}A_0} \& {A_0 \times_{r_0, G_{00}, r_0} A_0,}
	\arrow["\cong", from=1-1, to=1-2]
	\arrow["{(pr_1, s\circ pr_2)}"', from=1-1, to=3-1]
	\arrow["\lrcorner"{anchor=center, pos=0.125}, draw=none, from=1-1, to=3-2]
	\arrow["{(pr_1, s\circ pr_2)}", from=1-2, to=3-2]
	\arrow["\cong", from=3-1, to=3-2]
    \end{tikzcd}\]
    we have that $J_{10} \times_{s, J_{00}, l_0s}A_1 \cong A_0 \times_{r_0, G_{00}, sr_1} A_1$. 
    At this point, we have another pullback diagram 
    \[\begin{tikzcd}[ampersand replacement=\&,sep=small]
	{J_{11} \times_{s_{\bullet 1}, J_{01}, l_1}A_1} \& {J_{10} \times_{s, J_{00}, l_0s}A_1} \\
	\\
	{J_{11}} \& {J_{10}\times_{s, J_{00},s} J_{01},}
	\arrow[two heads, from=1-1, to=1-2]
	\arrow[from=1-1, to=3-1]
	\arrow["\lrcorner"{anchor=center, pos=0.125}, draw=none, from=1-1, to=3-2]
	\arrow["{(pr_1, l_1\circ pr_2)}", from=1-2, to=3-2]
	\arrow["{(s_{1\bullet}, s_{\bullet 1})}"', two heads, from=3-1, to=3-2]
    \end{tikzcd}\]
    where the bottom map is the double source of $J$, which is a surjective submersion when $J$ is full. So, the map 
    \begin{equation*}
        J_{11} \times_{s_{\bullet 1}, J_{01}, l_1} A_1 \longrightarrow J_{10} \times_{s, J_{00}, l_0s} A_1
    \end{equation*}
    is a surjective submersion. 
    Again by \eqref{diag:left-principal}, $J_{11} \times_{s_{\bullet 1}, J_{01}, l_1} A_1 \cong A_1 \times_{r_1, G_{01}, r_1}A_1$. Hence, the following diagram commutes
    \[\begin{tikzcd}[ampersand replacement=\&,sep=small]
	{A_1 \times_{r_1, G_{01}, r_1}A_1} \&\& {A_0\times_{r_0,G_{00},s}G_{01} } \\
	\& {A_1}
	\arrow["{(s\circ pr_1, r_1 \circ pr_1)}", two heads, from=1-1, to=1-3]
	\arrow["{pr_1}"', two heads, from=1-1, to=2-2]
	\arrow["{(s,r_1)}"', from=2-2, to=1-3]
    \end{tikzcd}\]
    and because both $(s, r_1) \circ pr_1$ and $pr_1$ are surjective submersions ($pr_1$ is indeed a source map in the submersion double groupoid in \eqref{diag:left-principal}), we have that $(s,r_1)$ is a surjective submersion.   
\end{proof}

We now show that a (1,1)-morphism $(\Omega,A^{JG},B_J^K,C_G^H,D^{KH})$ can be rewritten as a double  anafunctor of double Lie groupoids as in \cite[Remark 2.16]{AGJ2024}, by taking
\begin{equation}\label{diag:double-span-total-action-of-square}
\begin{tikzcd}[ampersand replacement=\&,cramped,sep=scriptsize]
	K \& {\hortilde{D}} \& H \\
	{\vertilde{B}} \& {\widetilde{\widetilde{\Omega}}} \& {\vertilde{C}} \\
	J \& {\hortilde{A}} \& {G,}
	\arrow[from=1-2, to=1-1]
	\arrow["{\sim_h}", two heads, from=1-2, to=1-3]
	\arrow[from=2-1, to=1-1]
	\arrow["{\sim_v}", two heads, from=2-1, to=3-1]
	\arrow[from=2-2, to=1-2]
	\arrow[from=2-2, to=2-1]
	\arrow["{\sim_h}", two heads, from=2-2, to=2-3]
	\arrow["{\sim_v}", two heads, from=2-2, to=3-2]
	\arrow[from=2-3, to=1-3]
	\arrow["{\sim_v}", two heads, from=2-3, to=3-3]
	\arrow[from=3-2, to=3-1]
	\arrow["{\sim_h}", two heads, from=3-2, to=3-3]
\end{tikzcd}
\end{equation}
where $\widetilde{\widetilde{\Omega}}$ is the \textbf{total action groupoid} we define below and the double-headed vertical/horizontal arrows are vertical/horizontal levelwise hypercovers, respectively, as we show in Lemma~\ref{lem:doubleheadsRhypercovers}. The construction of the total action groupoid consists of two steps which can be performed interchangeably. First of all, $\Omega$ is a manifold with four groupoid actions: two ``horizontal actions'' of $B$  and $C$ and two ``vertical actions'' of $A$ and $D$. Hence we have the following available constructions:
\begin{itemize}
    \item We can first take the biaction groupoid of $\Omega$ with respect to the ``horizontal actions'', i.e. those of $B$ and $C$. We denote this by $\hortilde{\Omega}$. Together with the horizontal biaction double groupoids of $D$ (with respect to the actions of $K$ and $H$) and $A$ (with respect to the actions of $J$ and $G$) this gives a vertical generalized morphism, as we show in Lemma \ref{lem:hortilde-omega-is-vert-gen-morph}. Then we can make this into a double  anafunctor by applying $\vertilde{\,\cdot\,}$ and we obtain $\vertilde{\hortilde{\Omega}}$.
    \item Alternatively, we can take the biaction groupoid of $\Omega$ with respect to the ``vertical actions'' first, i.e. those of $A$ and $D$. We denote this by $\vertilde{\Omega}$. Together with the vertical biaction double groupoids of $B$ (with respect to the actions of $K$ and $J$) and $C$ (with respect to the actions of $H$ and $G$) this gives a horizontal generalized morphism, as we show in Lemma \ref{lem:vertilde-omega-is-hor-gen-morph}, the dual of Lemma \ref{lem:hortilde-omega-is-vert-gen-morph}. Then we can make this into a double  anafunctor by applying $\hortilde{\,\cdot\,}$ and we obtain $\hortilde{\vertilde{\Omega}}$.
\end{itemize}
A straightforward computation, using the fact that all the actions we consider are compatible with each other by Definition \ref{def:1-1-morphism}, shows that these two constructions are the same (possibly up to isomorphisms rearranging the order of components of each fiber product), so we denote them by $\widetilde{\widetilde{\Omega}}:=\vertilde{\hortilde{\Omega}}=\hortilde{\vertilde{\Omega}}$. This then fits in the double span \eqref{diag:double-span-total-action-of-square}. The following results show that this construction is well-defined.

\begin{lemma}\label{lem:hortilde-omega-is-vert-gen-morph}
    Given a (1,1)-morphism $(\Omega,A^{JG},B_J^K,C_G^H,D^{KH})$, between full double groupoids,
    \begin{equation*}
        \hortilde{D} \leftarrow \hortilde{\Omega} \twoheadrightarrow \hortilde{A}
    \end{equation*}
    is a vertical generalized morphism $\hortilde{A} \vermor \hortilde{D}$. 
\end{lemma}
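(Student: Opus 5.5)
We first make the data explicit. By definition, $\hortilde{\Omega}$ is the biaction groupoid of the HS bibundle $\Omega\colon C \dashrightarrow B$ with respect to the horizontal actions, so $\hortilde{\Omega}_1 = B_1\times_{B_0}\Omega\times_{C_0}C_1$ and $\hortilde{\Omega}_0=\Omega$. A vertical generalized morphism $\hortilde{A}\vermor\hortilde{D}$ needs four things: moment maps $\hortilde{\Omega}\to\hortilde{D}_{\bullet 0}$ and $\hortilde{\Omega}\to\hortilde{A}_{\bullet 0}$, a right vertical $\hortilde{A}$-action, a left vertical $\hortilde{D}$-action, commutation of the two actions, and principality of the left action with quotient $\hortilde{\Omega}\to\hortilde{A}_{\bullet 0}$.

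Note that $\hortilde{A}_{\bullet 0}$ is the groupoid $J_{10}\times A_0\times G_{10}\rightrightarrows A_0$. The moment maps will then be
\begin{equation*}
(b,\omega,c)\mapsto (\rho_B(b),\rho_\Omega(\omega),\rho_C(c)), \qquad \omega\mapsto \rho(\omega)\in A_0,
\end{equation*}
where $\rho$ denotes the vertical (downward) moment maps of $B$, $\Omega$ and $C$ to $J$, $A$ and $G$ respectively. The left moment map is defined analogously using the upward maps to $K$, $D$ and $H$. Each is a groupoid morphism because the moment maps of $B$ and $C$ are groupoid morphisms and the actions are compatible.

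The actions are defined componentwise. On objects, $\hortilde{A}_{10}=J_{10}\times A_0\times G_{10}$ acts on $\hortilde{\Omega}_0=\Omega$ through the $A_0$-action on $\Omega$. On arrows, $\hortilde{A}_{11}=J_{11}\times A_1\times G_{11}$ acts on $B_1\times\Omega\times C_1$ by
\begin{equation*}
(b,\omega,c)\cdot(j,a,g)=(b\cdot j,\ \omega\cdot a,\ c\cdot g),
\end{equation*}
using the right vertical actions of $J$ on $B$, $A$ on $\Omega$ and $G$ on $C$. The left $\hortilde{D}$-action is defined in the same way. We then check the following:
\begin{itemize}
\item the action maps form a groupoid morphism, i.e.\ they are compatible with the horizontal multiplication of $\hortilde{\Omega}$, using the formula $(b',\omega',c')(b,\omega,c)=(b'b,\,b^{-1}\cdot\omega',\,c'c)$;
\item the $\Join$-multiplication on $\hortilde{A}_{11}$ is respected.
\end{itemize}
Both reduce to the interchange laws of Definition~\ref{def:1-1-morphism} together with the fact that the actions of $B$ and $C$ are morphic actions of double groupoids. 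Commutation of the left and right actions is likewise immediate from the pairwise commutation of the four actions on $\Omega$ and of the actions on $B$ and $C$.

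The main obstacle is principality. We must show that the map
\begin{equation*}
\hortilde{D}_{\bullet 1}\times\hortilde{\Omega}\to\hortilde{\Omega}\times_{\hortilde{A}_{\bullet 0}}\hortilde{\Omega},
\end{equation*}
given by the action and the projection, is an isomorphism of double groupoids, and that the quotient map is a levelwise surjective submersion.

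At level $0$, this is exactly principality of the $D_0$-action on $\Omega$ with quotient $A_0$, which holds since $\Omega$ is an HS bibundle $A\dashrightarrow D$. At level $1$, we unfold: $\hortilde{D}_{11}\times_{\hortilde{\Omega}_0}\hortilde{\Omega}_1$ is $K_{11}\times D_1\times H_{11}$ fibered with $B_1\times\Omega\times C_1$. We then use principality of the vertical $K$-action on $B$, of the $H$-action on $C$, and of $D$ on $\Omega$ factor by factor. The target is $(B_1\times\Omega\times C_1)\times_{(J_{10}\times A_0\times G_{10})}(B_1\times\Omega\times C_1)$. Since the constraint $b\cdot\omega=\omega'$-type conditions in $\hortilde{\Omega}_1$ are determined by the middle component, we can rewrite both sides as iterated fiber products and reduce to the three separate principality isomorphisms, in the same spirit as the reparametrizations in the proof of Lemma~\ref{lem:HorGenMorphToHorAna}.

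Surjective submersivity of $\hortilde{\Omega}_1\to\hortilde{A}_{10}$ is where fullness enters. Given $(j,a_0,g)$, we must lift to $(b,\omega,c)$ with the constrained middle component. We factor this map as in the proof of Lemma~\ref{lem:fullness-implies-extra-cond-horizontal}: first use surjectivity of $\Omega\to A_0$, then lift $j$ and $g$ along $B_1\to J_{10}$ and $C_1\to G_{10}$ with prescribed endpoint, using the fullness (Kan-type) conditions from Lemmas~\ref{lem:fullness-implies-extra-cond-vertical} and~\ref{lem:fullness-implies-extra-cond-horizontal} applied to $K$ and $H$. Each step is a pullback of a surjective submersion, so the composite is one as well. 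I expect this lifting step, together with identifying the correct fiber products, to be the delicate part.
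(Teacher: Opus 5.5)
Your proposal is correct and follows essentially the same route as the paper. The surjective submersion $\hortilde{\Omega}_1\to\hortilde{A}_{10}$ is obtained exactly as you describe, by composing the pullback of $\Omega\twoheadrightarrow A_0$ with the lifts $B_1\twoheadrightarrow J_{10}\times_{J_{00}}B_0$ and $C_1\twoheadrightarrow C_0\times_{G_{00}}G_{10}$, and principality is then reduced factor by factor to the principality of $K$, $D$, $H$ on $B$, $\Omega$, $C$. Two small corrections: only Lemma~\ref{lem:fullness-implies-extra-cond-vertical} (applied to $B$ and $C$, using fullness of $K$ and $H$) is needed, and at level $0$ the acting groupoid on $\Omega$ is $\hortilde{A}_{01}=A_1$, not $\hortilde{A}_{10}$.
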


As anticipated, by vertical/horizontal duality we have the following analogous result.

\begin{lemma}\label{lem:vertilde-omega-is-hor-gen-morph}
    Given a (1,1)-morphism $(\Omega,A^{JG},B_J^K,C_G^H,D^{KH})$ between full double groupoids, 
    \begin{equation*}
        \vertilde{B} \leftarrow \vertilde{\Omega} \twoheadrightarrow \vertilde{C}
    \end{equation*}
    is a horizontal generalized morphism $\vertilde{C} \hormor \vertilde{B}$. 
\end{lemma}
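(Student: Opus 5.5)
The plan is to prove this statement directly, mirroring the argument for Lemma~\ref{lem:hortilde-omega-is-vert-gen-morph} with horizontal and vertical roles swapped as in Remark~\ref{rem:vert-hor-duality}. I will still spell out the structure, since the fullness input appears in a specific place. Recall that $\Omega$ is an HS bibundle $A \dashrightarrow D$ with $D$ acting principally on the left. Hence $\vertilde{\Omega}$ is the biaction Lie groupoid
\[
\vertilde{\Omega}_1 = D_1\times_{D_0}\Omega\times_{A_0}A_1 \rightrightarrows \vertilde{\Omega}_0=\Omega .
\]
By Proposition~\ref{prop:Bun-to-Ana-LieGpd} this is a Lie groupoid. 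The side groupoids of the vertical biaction double groupoids are
\[
\vertilde{C}_{0\bullet}= \bigl(H_{01}\times_{H_{00}}C_0\times_{G_{00}}G_{01}\rightrightarrows C_0\bigr), \qquad
\vertilde{B}_{0\bullet}= \bigl(K_{01}\times_{K_{00}}B_0\times_{J_{00}}J_{01}\rightrightarrows B_0\bigr).
\]
The moment maps are defined componentwise:
\[
r(d,\omega,a)=(r_1 d,\, r\omega,\, l_1 a), \qquad l(d,\omega,a)=(l_1 d,\, l\omega,\, r_1 a),
\]
using the moment maps of $D_1,A_1$ to $H_{01},K_{01},G_{01},J_{01}$ and of $\Omega$ to $C_0,B_0$. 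That these are Lie groupoid morphisms follows from equivariance of the moment maps of $\Omega$ with respect to the $A$- and $D$-actions, which is part of Definition~\ref{def:1-1-morphism}.

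Next I define the actions, again componentwise. A square $(h,c,g)\in\vertilde{C}_{11}=H_{11}\times C_1\times G_{11}$ acts on $(d,\omega,a)$ by
\[
(d,\omega,a)\cdot(h,c,g) = (d\cdot_h h,\ \omega\cdot c,\ a\cdot_h g).
\]
Here $H$ acts on $D$ horizontally, $C_1$ acts on $\Omega$, and $G$ acts on $A$. At level $0$ the action is $C_1$ acting on $\Omega$. The left $\vertilde{B}$-action is defined analogously, via $K$ on $D$, $B_1$ on $\Omega$ and $J$ on $A$. The required verifications are as follows.
\begin{itemize}
\item The result lands in $\vertilde{\Omega}_1$ and the action maps are groupoid morphisms, i.e.\ they intertwine the biaction multiplication of $\vertilde{\Omega}$ with $\vJoin$ on $\vertilde{C}$.
\item The left and right actions commute.
\end{itemize}
All of these reduce to the four-fold compatibility identities of Definition~\ref{def:1-1-morphism}, together with the morphic nature of the actions of $H,G$ on $D,A$. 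This part is routine.

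The substantive step is principality of the left $\vertilde{B}$-action with quotient $r:\vertilde{\Omega}\to\vertilde{C}_{0\bullet}$, in the sense of \eqref{diag:left-principal}.
\begin{itemize}
\item At level $0$, principality is exactly principality of the $B$-action on $\Omega$ over $C_0$.
\item At level $1$, I would identify $\vertilde{B}_{11}\times_{\vertilde{B}_{01}}\vertilde{\Omega}_1$ with $\vertilde{\Omega}_1\times_{\vertilde{C}_{01}}\vertilde{\Omega}_1$. Componentwise this uses principality of $K$ on $D_1$ over $H_{01}$, of $B_1$ on $\Omega$ over $C_0$, and of $J$ on $A_1$ over $G_{01}$. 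One then checks that the fibered constraints over $D_0$ and $A_0$ match on both sides; this is again the compatibility of actions.
\end{itemize}

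The main obstacle I expect is showing that $r_1:\vertilde{\Omega}_1\to \vertilde{C}_{01}$ is a surjective submersion. Given $(\eta,x,\gamma)$, one must lift $\eta\in H_{01}$ to some $d\in D_1$ and $\gamma\in G_{01}$ to some $a\in A_1$ with prescribed objects over a point $\omega\in\Omega$ that lies over $x$. This is precisely the Kan-type condition of Remark~\ref{rem:extra-cond-as-Kan-cond}. I would obtain it from Lemma~\ref{lem:fullness-implies-extra-cond-horizontal} applied to $D$ and $A$, which uses fullness of $K$ and $J$ and gives that $(s,r_1)$ and $(t,r_1)$ are surjective submersions. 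The lifting then follows by the same chain of pullback diagrams as in that proof, combined with surjectivity of $\Omega\to C_0$.
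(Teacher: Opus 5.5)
Your proposal is correct and is exactly the dual of the paper's explicit proof of Lemma~\ref{lem:hortilde-omega-is-vert-gen-morph}, which is how the paper obtains this statement: $r_1\colon\vertilde{\Omega}_1\to\vertilde{C}_{01}$ is a surjective submersion as the composite of $D_1\times_{D_0}\Omega\times_{A_0}A_1\twoheadrightarrow H_{01}\times_{H_{00}}\Omega\times_{G_{00}}G_{01}$ (from Lemma~\ref{lem:fullness-implies-extra-cond-horizontal} for $D$ and $A$, using fullness of $K$ and $J$) with the base change of $\Omega\twoheadrightarrow C_0$, and principality comes componentwise from that of $K$, $B$, $J$ on $D$, $\Omega$, $A$ (showing that the componentwise preimage lies in $\vertilde{B}_{11}$ also needs freeness of $K_{10}$ on $D_0$ and of $J_{10}$ on $A_0$, not only compatibility). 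One slip to fix: the $A_1$-components of your moment maps are swapped, and they should read $r(d,\omega,a)=(r_1d,\,r\omega,\,r_1a)\in\vertilde{C}_{01}$ and $l(d,\omega,a)=(l_1d,\,l\omega,\,l_1a)\in\vertilde{B}_{01}$.
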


We give an explicit proof of the vertical version.

\begin{proof}[Proof of Lemma \ref{lem:hortilde-omega-is-vert-gen-morph}]
    By applying $\hortilde{\,\cdot\,}$ to the diagram \eqref{diag:1-1-morphism}, we obtain the following diagram
    \[\begin{tikzcd}[ampersand replacement=\&,cramped,sep=small]
	{\hortilde{D}_{\bullet 1}:} \& {K_{11}\times_{K_{01}} D_1 \times_{H_{01}}H_{11}} \& {D_1} \\
	{\hortilde{D}_{\bullet 0}:} \& {K_{10}\times_{K_{00}}D_0\times_{H_{00}}H_{10}} \& {D_0} \\
	{\hortilde{\Omega}_{\bullet}:} \& {B_1\times_{B_0}\Omega \times_{C_0}C_1} \& \Omega \\
	{\hortilde{A}_{\bullet 0}:} \& {J_{10}\times_{J_{00}} A_0 \times_{G_{00}}G_{10}} \& {A_0} \\
	{\hortilde{A}_{\bullet 1}:} \& {J_{11} \times_{J_{01}}A_1 \times_{G_{01}}G_{11}} \& {A_1}
	\arrow[shift left, from=1-2, to=1-3]
	\arrow[shift right, from=1-2, to=1-3]
	\arrow[shift left, from=1-2, to=2-2]
	\arrow[shift right, from=1-2, to=2-2]
	\arrow[shift left, from=1-3, to=2-3]
	\arrow[shift right, from=1-3, to=2-3]
	\arrow[shift left, from=2-2, to=2-3]
	\arrow[shift right, from=2-2, to=2-3]
	\arrow[from=3-2, to=2-2]
	\arrow[shift right, from=3-2, to=3-3]
	\arrow[shift left, from=3-2, to=3-3]
	\arrow[two heads, from=3-2, to=4-2]
	\arrow[from=3-3, to=2-3]
	\arrow[two heads, from=3-3, to=4-3]
	\arrow[shift left, from=4-2, to=4-3]
	\arrow[shift right, from=4-2, to=4-3]
	\arrow[shift right, from=5-2, to=4-2]
	\arrow[shift left, from=5-2, to=4-2]
	\arrow[shift left, from=5-2, to=5-3]
	\arrow[shift right, from=5-2, to=5-3]
	\arrow[shift right, from=5-3, to=4-3]
	\arrow[shift left, from=5-3, to=4-3]
    \end{tikzcd}\]
    For this to be a vertical generalized morphism, we need to show that the left action of $\hortilde{D}$ is principal. Therefore, we begin by showing that the map
    \begin{equation}\label{eq:omega-biaction-proof-eq1}
        B_1\times_{B_0}\Omega \times_{C_0}C_1 \longrightarrow J_{10}\times_{J_{00}} A_0 \times_{G_{00}}G_{10} 
    \end{equation}
    is a surjective submersion. 

	Because $\Omega \twoheadrightarrow A_0$ is a surjective submersion by hypothesis, 
	\begin{equation}\label{eq:omega-biaction-proof-eq2}
		J_{10}\times_{J_{00}} \Omega \times_{G_{00}}G_{10}  \twoheadrightarrow J_{10}\times_{J_{00}} A_0 \times_{G_{00}}G_{10} 
	\end{equation}
	is also one. By Lemma \ref{lem:fullness-implies-extra-cond-vertical}, we get that 
	\begin{equation*}
		B_1 \twoheadrightarrow J_{10} \times_{J_{00}} B_0, \qquad C_1 \twoheadrightarrow C_0 \times_{G_{00}} G_{10}
	\end{equation*}
	are surjective submersions, so 
	\begin{equation}\label{eq:omega-biaction-proof-eq3}
		B_1\times_{B_0}\Omega \times_{C_0}C_1 \twoheadrightarrow J_{10}\times_{J_{00}} B_0 \times_{B_0} \Omega \times_{C_0} C_0 \times_{G_{00}}G_{10} \cong J_{10}\times_{J_{00}} \Omega \times_{G_{00}}G_{10} 
	\end{equation}
	is a surjective submersion. Since \eqref{eq:omega-biaction-proof-eq1} is the composition of \eqref{eq:omega-biaction-proof-eq3} and \eqref{eq:omega-biaction-proof-eq2}, then it is a surjective submersion. 
	Pictorially:
	
	\begin{center}
	\includegraphics[width=\textwidth]{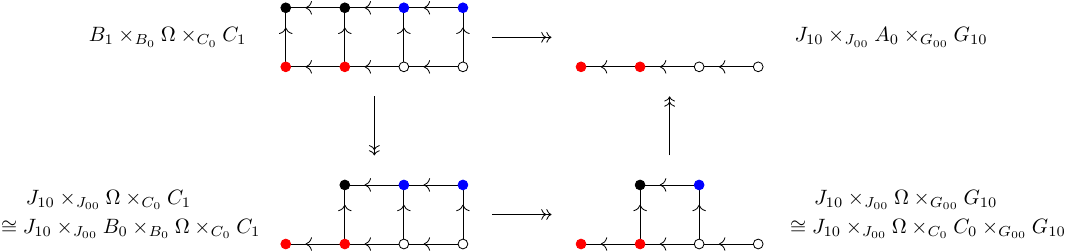}	
	\end{center}
	
	Principality of the action of $\hortilde{D}$ follows immediately by principality of the actions of $K$, $D$ and $H$ on $B$, $\Omega$ and $C$, respectively, as pictured below.

	\begin{center}
	\includegraphics[width=0.9\textwidth]{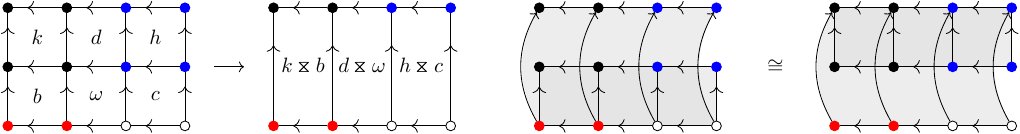}
	\end{center}
    
\end{proof}

\begin{lemma}\label{lem:doubleheadsRhypercovers}
    The vertical/horizontal double-headed maps in \eqref{diag:double-span-total-action-of-square} are vertical/horizontal hypercovers, respectively.
\end{lemma}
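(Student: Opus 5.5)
The double-headed arrows in \eqref{diag:double-span-total-action-of-square} fall into two groups: the four outer ones, $\hortilde{A}\to G$, $\hortilde{D}\to H$, $\vertilde{B}\to J$, $\vertilde{C}\to G$, and the two inner ones, $\widetilde{\widetilde{\Omega}}\to\vertilde{C}$ (horizontal) and $\widetilde{\widetilde{\Omega}}\to\hortilde{A}$ (vertical). The outer ones are exactly the maps $\widetilde{r}$ of Lemmas~\ref{lem:HorGenMorphToHorAna} and~\ref{lem:VerGenMorphToVerAna}, applied to the horizontal generalized morphisms $A,D$ and the vertical ones $B,C$. So these are horizontal (resp. vertical) levelwise hypercovers by those lemmas, and they are full by Lemmas~\ref{lem:fullness-implies-extra-cond-horizontal} and~\ref{lem:fullness-implies-extra-cond-vertical}. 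Our hypothesis that all double groupoids are full also makes $\hortilde{A},\hortilde{D},\vertilde{B},\vertilde{C}$ full.

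For the inner vertical arrow, write $\widetilde{\widetilde{\Omega}}=\vertilde{\hortilde{\Omega}}$. By Lemma~\ref{lem:hortilde-omega-is-vert-gen-morph}, $\hortilde{\Omega}$ is a vertical generalized morphism $\hortilde{A}\vermor\hortilde{D}$ between full double Lie groupoids. Lemma~\ref{lem:VerGenMorphToVerAna} then shows that its map $\widetilde{r}:\vertilde{\hortilde{\Omega}}\to\hortilde{A}$ is a vertical levelwise hypercover, and Lemma~\ref{lem:fullness-implies-extra-cond-vertical} shows it is full. Symmetrically, write $\widetilde{\widetilde{\Omega}}=\hortilde{\vertilde{\Omega}}$ and use Lemma~\ref{lem:vertilde-omega-is-hor-gen-morph} together with Lemmas~\ref{lem:HorGenMorphToHorAna} and~\ref{lem:fullness-implies-extra-cond-horizontal}. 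This gives that $\widetilde{\widetilde{\Omega}}\to\vertilde{C}$ is a full horizontal levelwise hypercover.

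The main obstacle is the identification $\vertilde{\hortilde{\Omega}}\cong\hortilde{\vertilde{\Omega}}$, which we need as an isomorphism of double Lie groupoids over the whole square of maps, not merely levelwise. I would check it by writing both $(1,1)$-levels explicitly, as iterated fibre products of $K_{11},D_1,H_{11},B_1,\Omega,C_1,J_{11},A_1,G_{11}$ over the appropriate edge spaces. Reordering the factors then gives a diffeomorphism that intertwines the structure maps, using the commutation of the four actions from Definition~\ref{def:1-1-morphism}. I also need to check that under this identification the two $\widetilde r$ projections agree with the maps drawn in the diagram. Both are projections onto factors forgetting the $D$, $K$, $H$ (resp. $B$, $K$, $J$) components, followed by the action maps, so they correspond. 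Since isomorphisms are levelwise hypercovers and compositions preserve the property (Remarks~\ref{rem:pullback-hor-hypercover} and~\ref{rem:hor-vert-hypercov-properties}), transporting along this isomorphism is harmless. The remaining checks follow from vertical/horizontal duality.
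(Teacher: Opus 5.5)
Your proposal is correct and follows the paper's proof. The outer arrows are the maps $\widetilde r$ of Lemmas~\ref{lem:HorGenMorphToHorAna} and~\ref{lem:VerGenMorphToVerAna}, and the inner arrows come from applying those two lemmas to the generalized morphisms $\vertilde{\Omega}\colon\vertilde{C}\hormor\vertilde{B}$ and $\hortilde{\Omega}\colon\hortilde{A}\vermor\hortilde{D}$ of Lemmas~\ref{lem:vertilde-omega-is-hor-gen-morph} and~\ref{lem:hortilde-omega-is-vert-gen-morph}. Your extra remarks are also correct: fullness of these maps is not needed here but is what the later $\Wbar$ results require, and the paper asserts $\vertilde{\hortilde{\Omega}}=\hortilde{\vertilde{\Omega}}$ as a straightforward computation, which holds on the nose because both $(1,1)$-levels are the same subset of the nine-fold product, cut out by the same matching conditions.
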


\begin{proof}
Given a (1,1)-morphism $(\Omega,A^{JG},B_J^K,C_G^H,D^{KH})$, construct the double Lie groupoids $\hortilde{A}, \hortilde{D}$ and $ \vertilde{B}, \vertilde{C}$. By Lemma~\ref{lem:HorGenMorphToHorAna}, $\hortilde{A}\overset{\sim_h}{\twoheadrightarrow} G$ and $\hortilde{D}\overset{\sim_h}{\twoheadrightarrow}H$ are horizontal levelwise hypercovers. Furthermore, by applying Lemma~\ref{lem:HorGenMorphToHorAna} to the horizontal generalized morphism $\vertilde{B} \leftarrow \vertilde{\Omega} \twoheadrightarrow \vertilde{C}$ from Lemma \ref{lem:vertilde-omega-is-hor-gen-morph}, we obtain that $\widetilde{\widetilde{\Omega}}\overset{\sim_h}{\twoheadrightarrow} \vertilde{C}$ is also a horizontal levelwise hypercover. By using horizontal/vertical duality and Lemmas~\ref{lem:VerGenMorphToVerAna} and \ref{lem:hortilde-omega-is-vert-gen-morph} all the double headed vertical arrows are also vertical levelwise hypercovers. 
\end{proof}

We now discuss the $(2,k)$ and $(k,2)$ morphisms of the double bicategory of anafunctors and apply the biaction construction to obtain these from biequivariant maps of bibundles.

\begin{definition}
    A \textbf{horizontal natural transformation} between morphisms of double groupoids $f, f': G\to H$ is equivalently: 
    \begin{enumerate}
        \item A Lie groupoid morphism $\alpha: G_{0\bullet} \to H_{1\bullet}$ such that for all $g \in G_{1k}$, $k \ge 0$, the naturality condition $\alpha(tg) f(g) = f'(g) \alpha(sg)$ holds. 
        \item A bisimplicial map $G \times \Delta^{1,0} \to H$, where $\Delta^{1,0}$ is the bisimplicial set with rows all equal to $\Delta^1$ and columns the identity groupoids of $\Delta^1_k$ for each $k$.
    \end{enumerate}
    
    Dually, a \textbf{vertical natural transformation} between morphisms of double groupoids $f, f': G\to H$ is equivalently: 
    \begin{enumerate}
        \item A Lie groupoid morphism $\alpha: G_{\bullet 0} \to H_{\bullet 1}$ such that for all $g \in G_{k1}$, $k \ge 0$, the naturality condition $\alpha(tg) f(g) = f'(g) \alpha(sg)$ holds. 
        \item A bisimplicial map $G \times \Delta^{0,1} \to H$, where $\Delta^{0,1}$ is the bisimplicial set with columns all equal to $\Delta^1$ and rows the identity groupoids of $\Delta^1_k$ for each $k$.
    \end{enumerate}
\end{definition}

\begin{definition}
    A \textbf{2-morphism of horizontal anafunctors} is a horizontal natural transformation $\alpha$ between the map $A\times_GA' \to A \to J$ and the map $A \times_G A' \to A' \to J$ as in the following diagram
\begin{equation}\label{diag:2-hom-hor-anafunctor}
\begin{tikzcd}[ampersand replacement=\&,cramped]
	\&\& A \&\& \\
	G \&\& {A\times_GA'} \&\& J \\
	\&\& {A'}
	\arrow["{\sim_h}"'{pos=0.8}, two heads, from=1-3, to=2-1]
	\arrow[from=1-3, to=2-5]
	\arrow["\alpha", curve={height=-44pt}, between={0.2}{0.8}, Rightarrow, nfold, from=1-3, to=3-3]
	\arrow["{_h}"{description, pos=0.7}, shift left=2, curve={height=-44pt}, draw=none, from=1-3, to=3-3]
	\arrow["{\sim_h}", two heads, from=2-3, to=1-3]
	\arrow["{\sim_h}"', two heads, from=2-3, to=3-3]
	\arrow[""{name=0, anchor=center, inner sep=0}, "{\sim_h}"{pos=0.8}, two heads, from=3-3, to=2-1]
	\arrow[from=3-3, to=2-5]
	\arrow["\lrcorner"{anchor=center, pos=0.125, rotate=-90}, draw=none, from=2-3, to=0]
\end{tikzcd}
\end{equation}
where the pullback exists by \ref{rem:pullback-hor-hypercover}. We define \textbf{2-morphisms of vertical anafunctors} dually. 

A \textbf{horizontal (resp. vertical) 2-morphism of double  anafunctors} is a pair of double  anafunctors with the same vertical (resp. horizontal) sides, whose rows (resp. columns) are connected by three 2-morphisms of horizontal (resp. vertical) anafunctors, which are compatible with the vertical (resp. horizontal) maps in the double anafunctors. 
\end{definition}

It is evident that applying the horizontal biaction groupoid construction to a biequivariant Lie groupoid morphism of horizontal bibundles immediately yields a 2-morphism of the corresponding horizontal anafunctors. The same is true for the vertical direction. By functoriality, this construction also maps $(2,1)$- and $(1,2)$-morphisms in $\mathsf{Bun(LieGpd_{full}^2)}$ to horizontal and vertical 2-morphisms of double  anafunctors, respectively. 

Finally, we make a precise definition of the double bicategory of anafunctors of double Lie groupoids. Note that when restricting to full double groupoids we also require  anafunctors to be full, which of course forces all the double anafunctors and higher morphisms to be between full anafunctors. 

\begin{definition}
	The double bicategory $\Ana(\mathsf{LieGpd^2})$ of double Lie groupoids and anafunctors consists of the following:
	\begin{itemize}
		\item Its objects are double Lie groupoids;
		\item Its horizontal ($(1,0)$-) morphisms are the horizontal anafunctors. Morphisms between these (i.e. $(2,0)$-morphisms) are the 2-morphisms of horizontal anafunctors.
		\item Its vertical ($(0,1)$-) morphisms are the vertical anafunctors. Morphisms between these (i.e. $(0,2)$-morphisms) are 2-morphisms of vertical anafunctors.
		\item Its $(1,1)$-morphisms are the double anafunctors. 
        \item $(2,1)$- and $(1,2)$-morphisms are horizontal and vertical 2-morphisms of double anafunctors, respectively.
        \item $(2,2)$-morphisms follow the schematic in Figure \ref{fig:double-bicat-anafunctors}, involving the compatibility of both vertical and horizontal natural transformations.
	\end{itemize}
	We denote the sub-double-bicategory of full double groupoids and full anafunctors by $\Ana(\mathsf{LieGpd^2_{full}})$.
\end{definition}

By the discussion above, we have the following theorem.

\begin{theorem} The biaction groupoid construction defines a morphism of double bicategories 
	\begin{equation*}
		\mathsf{Bun}(\mathsf{LieGpd^2_{full}}) \longrightarrow \Ana(\mathsf{LieGpd^2_{full}}).
	\end{equation*}
\end{theorem}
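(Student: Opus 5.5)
We assemble the morphism of double bicategories cell by cell from the lemmas already proved, and then check that it respects the structure coherently.

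\emph{Objects.} On objects the assignment is the identity on full double Lie groupoids.

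\emph{Horizontal and vertical 1-cells.} A horizontal generalized morphism $A: G\hormor J$ is sent to the horizontal anafunctor $J\leftarrow \hortilde{A}\overset{\sim_h}{\twoheadrightarrow} G$. By Lemma~\ref{lem:HorGenMorphToHorAna}, this is a horizontal anafunctor between full double Lie groupoids. Lemma~\ref{lem:fullness-implies-extra-cond-horizontal} shows that the leg $\widetilde r$ is full, so the anafunctor lies in $\Ana(\mathsf{LieGpd^2_{full}})$. The vertical case is handled dually, using Lemmas~\ref{lem:VerGenMorphToVerAna} and~\ref{lem:fullness-implies-extra-cond-vertical}.

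\emph{Squares.} A $(1,1)$-morphism $(\Omega,A,B,C,D)$ is sent to the double span \eqref{diag:double-span-total-action-of-square}, with the total action groupoid $\widetilde{\widetilde{\Omega}}$ in the center. Lemmas~\ref{lem:hortilde-omega-is-vert-gen-morph} and~\ref{lem:vertilde-omega-is-hor-gen-morph} show that the central rows and columns come from generalized morphisms. Lemma~\ref{lem:doubleheadsRhypercovers} shows that the double-headed legs are levelwise hypercovers. Fullness of the central legs follows by applying Lemmas~\ref{lem:fullness-implies-extra-cond-horizontal} and~\ref{lem:fullness-implies-extra-cond-vertical} to the generalized morphisms $\vertilde{\Omega}$ and $\hortilde{\Omega}$, whose acting double groupoids $\vertilde{B}$ and $\hortilde{D}$ are full. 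Commutativity of the double span is immediate, since all maps are induced by moment maps and projections.

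\emph{Higher cells.} For $(2,0)$-, $(0,2)$-, $(2,1)$-, $(1,2)$- and $(2,2)$-cells, we apply the recipe of \S\ref{sec:Bun-to-Ana-LieGpd} levelwise. A biequivariant map $\phi: A\to A'$ determines a horizontal natural transformation on $\hortilde{A}\times_G\hortilde{A'}$. At each level $k$ it sends $(a,a')$ to the unique $j\in J_{1k}$ with $a' = j\cdot \phi(a)$. This $j$ exists, is unique and depends smoothly on $(a,a')$ by principality, exactly as in the Lie groupoid case. The same levelwise principality, applied in the other direction, shows that this is a Lie groupoid morphism $G_{0\bullet}$-wise.

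\emph{Functoriality.} Next we must check that compositions and units are preserved up to coherent invertible 2-cells. On each side groupoid direction, we reduce to Proposition~\ref{prop:Bun-to-Ana-LieGpd} (\cite[Thm.~7.18]{MeyerZhu2015}), applied to the HS bibundles $A_0,A_1$. For a composite $A'\circ A=(A\times_J A')/J$, the comparison map
\[
\hortilde{A}\times_J\hortilde{A'}\longrightarrow \widetilde{(A'\circ A)}^h
\]
is compatible in both simplicial directions. The two levelwise comparison 2-cells therefore assemble into a horizontal natural transformation. Composition of squares in either direction is handled the same way, using that the two orders of taking biaction groupoids agree, so $\widetilde{\widetilde{\Omega}}=\vertilde{\hortilde{\Omega}}=\hortilde{\vertilde{\Omega}}$.

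\emph{Main obstacle.} The step I expect to be hardest is the bookkeeping showing that these comparison 2-cells are compatible across the two directions. Concretely, the coherence isomorphisms for horizontal composition of squares must be horizontal 2-morphisms of double anafunctors, compatible with the vertical structure maps. I would first check that each comparison map is built only from the actions and the quotient maps, so that compatibility follows from the commutation of the four actions in Definition~\ref{def:1-1-morphism}. The remaining coherence axioms of a morphism of double bicategories would then be inherited from the corresponding axioms of the pseudofunctor in Proposition~\ref{prop:Bun-to-Ana-LieGpd}, applied in each row and column.
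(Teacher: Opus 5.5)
Your proposal is correct and follows the same route as the paper. The paper's proof simply assembles the morphism from the preceding lemmas: Lemmas~\ref{lem:HorGenMorphToHorAna}, \ref{lem:fullness-implies-extra-cond-horizontal} and \ref{lem:doubleheadsRhypercovers} with their vertical duals, together with the observation that biequivariant maps yield 2-morphisms, and it defers the coherence of compositions and units to the Lie groupoid case, Proposition~\ref{prop:Bun-to-Ana-LieGpd}. Your version is more explicit than the paper's, for instance in checking fullness of the central legs via the fullness of $\vertilde{B}$ and $\hortilde{D}$, which the paper leaves implicit.
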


Once the precise definition of a morphism of double bicategories is established by straightforwardly adapting the definition of a pseudofunctor in \cite[Def. 4.1]{Benabou1967} to the double bicategories of \cite[Def. 3.1.1]{Morton2009}, the rest of the proof follows from the previous discussion and the Lie groupoid case from Section \ref{sec:Bun-to-Ana-LieGpd}.

\section{From double Lie groupoids to Lie 2-groupoids}\label{sec:double to simplicial}

We now recall the main definitions and results from \cite{MehtaTang2011}, which we need to construct a Lie 2-groupoid from a full double groupoid. 

\begin{definition}[\cite{ArtinMazur1966,MehtaTang2011}] Let $\mathfrak{X}= X_{\bullet,\bullet}$ be a bisimplicial manifold. The {\em codiagonal of $\mathfrak{X}$} is the simplicial manifold\footnote{It is assumed in this definition that the following fiber products exist. This is automatic in the case that interests us. } $(\overline{W}\mathfrak{X},d_i,s_i)$:
\begin{align*} &(\overline{W}\mathfrak{X})_n=\{(x_0,x_1,\dots, x_{n-1},x_n)\ |\ x_a\in  X_{n-a,a},\, d_0^hx_a=d^v_{a+1}x_{a+1}, \, \forall \, 0\leq a< n\} \\
& d_i(x_0,x_1,\dots, x_{n-1},x_n)=(d_i^hx_0,d_{i-1}^hx_1,\dots, d_2^hx_{i-2}, d_1^hx_{i-1}, d_i^vx_{i+1},\dots,d_i^vx_{n-1},d_i^vx_n) \\
& s_i(x_0,x_1,\dots, x_{n-1},x_n)=(s_i^hx_0,s_{i-1}^hx_1,\dots,s_{1}^hx_{i-1},s_0^hx_i,s_i^vx_i,\dots,s_i^vx_{n-1}, s_i^vx_n); 
\end{align*} 
where $d_i^h,s_i^h$ are the horizontal face and degeneracy maps (i.e. those for the rows $X_{\bullet, k}$), and $d_i^v,s_i^v$ are the vertical face and degeneracy maps (i.e. those for the columns $X_{k \bullet}$).
\end{definition}

\begin{proposition}[{\cite[Theorem 4.5]{MehtaTang2011}}]
	Let $G$ be a double Lie groupoid. Then, $\Wbar G$ is a local Lie 2-groupoid. Furthermore, if $G$ is a full double Lie groupoid, then $\Wbar G$ is a Lie 2-groupoid.
\end{proposition}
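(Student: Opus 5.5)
We must prove that $\Wbar G$ is a Lie 2-groupoid when $G$ is full; in general it is only a local Lie 2-groupoid. The plan is to compute the low levels of $\Wbar G$ explicitly and then check the horn conditions level by level.

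\textbf{Explicit description.} For a double Lie groupoid the bisimplicial nerve has $G_{p,q}$ equal to the $p\times q$ grids of composable squares. Unwinding the definition of the codiagonal gives the following low levels.
\begin{itemize}
\item $(\Wbar G)_0=G_{00}$.
\item $(\Wbar G)_1=G_{10}\times_{G_{00}}G_{01}$: pairs of a horizontal and a vertical arrow with matching endpoints, i.e.\ an ``L-shaped'' arrow.
\item $(\Wbar G)_2=G_{20}\times G_{11}\times G_{02}$, fibered over the face identifications: a square together with a horizontal arrow and a vertical arrow attached at the appropriate corners.
\end{itemize}
In general $(\Wbar G)_n$ is an iterated fiber product of nerve levels $G_{n-a,a}$ along face maps. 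Since these face maps are surjective submersions, every level is a manifold. Because each row and each column of $G$ is the nerve of a groupoid, $G_{p,q}$ is determined by its $1$-skeleton in both directions.

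\textbf{Horns above level 2.} Using this, I would show that $p^m_j$ is a diffeomorphism for $m\ge 3$ (in fact for $m\ge 4$ the simplicial identities reduce everything to $2$-coskeletality). The approach is to identify $(\Wbar G)_m\cong\Lambda^m_j(\Wbar G)$ by recovering each component $x_a\in G_{m-a,a}$ from its faces. This uses uniqueness of composites and inverses in the row and column groupoids. The argument is purely algebraic, so smoothness follows because the inverse maps are built from the structure maps.

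\textbf{Horns at levels 1 and 2.} The substantive part is $m=1,2$: we need the horn spaces to be smooth and the projections $p^1_j$ and $p^2_j$ to be surjective submersions.
\begin{itemize}
\item For $m=1$, $p^1_j$ is the source or target of an L-shaped arrow. This is a composite of pullbacks of source and target maps of the side groupoids, hence a surjective submersion.
\item For $m=2$, $\Lambda^2_j(\Wbar G)$ is a fiber product of two copies of $(\Wbar G)_1$ over $G_{00}$, which is smooth by the $m=1$ case. The map $p^2_j$ sends a $2$-simplex $(h,\sigma,v)$ to two of its L-shaped faces. Unwinding, for $j=1$ the face data amount to specifying the horizontal and vertical arrows of the pair $(h,v)$ together with the bottom and right edges of $\sigma$. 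After composing with the horizontal and vertical groupoid multiplications, which are diffeomorphisms onto suitable fiber products, $p^2_1$ becomes a base change of the double source map $(s_{1\bullet},s_{\bullet1}):G_{11}\to G_{10}\times_{G_{00}}G_{01}$.
\item For $j=0$ and $j=2$, the other corner maps $(t_{\bullet1},s_{1\bullet})$ and $(s_{\bullet1},t_{1\bullet})$ appear instead. By the Remark following the definition of fullness, these are surjective submersions exactly when $G$ is full.
\end{itemize}
Without fullness these maps are merely submersions. In that case the horn projections are submersions onto open images, which gives only a local Lie 2-groupoid and accounts for the first clause of the statement.

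\textbf{Main obstacle.} I expect the bookkeeping in the $m=2$ step to be the hardest part: correctly matching each horn $\Lambda^2_j$ with the corresponding corner map of $G_{11}$. The indexing conventions in $d_i$ mix horizontal and vertical faces, so I would first draw the pictures and only then verify that the reparametrizing diffeomorphisms are composed of groupoid multiplications and inversions. A secondary point is confirming that $\Acyc$-style smoothness of the higher horn spaces follows inductively from the lower ones, as in the proof of \cite[Lemma 2.4]{Zhu2009}.
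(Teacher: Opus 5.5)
The paper gives no proof of this statement; it imports it from \cite[Theorem 4.5]{MehtaTang2011}. Your level-by-level verification is a valid direct proof. It computes the low levels explicitly, identifies the level-$2$ horn projections with base changes of corner maps of $G_{11}$, and obtains unique algebraic fillers from level $3$ on.

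Two details should be corrected.

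First, take the face maps of $\Wbar G_2$ as written in the paper. Then $\Lambda^2_1$ records the bottom and \emph{left} edges of the square, so $p^2_1$ is a base change of $(t_{\bullet 1},s_{1\bullet})$, not of the double source map. The other two horns need a division first: for $p^2_0$ divide the composite horizontal face by $h$, and for $p^2_2$ divide the composite vertical face by $v$. After that, $p^2_0$ is a base change of $(t_{\bullet 1},t_{1\bullet})$ and $p^2_2$ is a base change of the double source map. This does not affect your conclusion, because all corner maps are simultaneously submersions, and simultaneously surjective exactly when $G$ is full.

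Second, $\Wbar G$ is $3$-coskeletal but in general not $2$-coskeletal. Already for an abelian Lie group viewed as a double groupoid over a point, a compatible boundary of a $3$-simplex need not fill. So for $m\ge 4$ you should not appeal to $2$-coskeletality. Instead use unique $3$-horn filling together with $3$-coskeletality, or your componentwise recovery of each $x_a$.
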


\begin{remark}
    A local Lie groupoid is a simplicial manifold where the Kan conditions hold only locally --- i.e. the horn projections are only required to be submersions (resp. injective étale) instead of surjective submersions (resp. diffeomorphisms). Our constructions can only be extended to certain situations where the starting double Lie groupoids are not full, by carefully adjusting the extra condition of fullness on morphisms accordingly. 
\end{remark}

We now recall the explicit form of $\Wbar G$ for a double Lie groupoid $G$ from \cite[\S 4.2]{MehtaTang2011}. This is specified by the following finite data in the sense of \cite[Prop.-Def. 2.16]{Zhu2009}:
The first three levels are
\begin{equation*}
\begin{split}
    \Wbar G_0 &= G_{00}, \qquad \Wbar G_1 = G_{10}\times_{s,G_{00}, t} G_{01},\\
    \Wbar G_2 &= \Lambda^{2}_1(G_{\bullet 0})\times_{d_0, G_{10}, t_{1\bullet}} G_{11} \times_{s_{\bullet 1}, G_{01}, d_2} \Lambda^2_1(G_{0\bullet}) \\
    &\cong G_{10} \times_{s, G_{00}, t^2} G_{11} \times_{s^2, G_{00}, t} G_{01}, 
\end{split}
\end{equation*}
where $t^2 = t \circ t_{\bullet 1} = t \circ t_{1\bullet}$ and $s^2 = s \circ s_{\bullet 1} = s \circ s_{1\bullet}$. The structure maps between these levels are
\begin{equation*}
\begin{array}{lcl}
    d_0(g_{10}, g_{01}) = s (g_{01}), &\quad    &d_0(g_{10}, g_{11}, g_{01})= (s_{1\bullet}(g_{11}),g_{01}) \\
    d_1(g_{10}, g_{01}) = t (g_{10}), &\quad   &d_1(g_{10}, g_{11}, g_{01})= (g_{10} \cdot t_{1\bullet}(g_{11}), s_{\bullet 1}(g_{11})\cdot g_{01}),   \\
    s_0(g_{00}) = (1_{g_{00}}, 1_{g_{00}}), &\quad   &d_2(g_{10}, g_{11}, g_{01})= (g_{10}, t_{\bullet 1} (g_{11})),  \\
    &\quad        &s_0(g_{10}, g_{01}) = (g_{10}, 1_{g_{01}}, 1_{s(g_{01})}),\\
    &\quad        &s_1(g_{10}, g_{01}) = (1_{t(g_{10})}, 1_{g_{10}}, g_{01}),\\
\end{array}
\end{equation*}
where $1$ is the unit map of either one of the sides of $G$ depending on context. This is summarized in Figure \ref{fig:Wbar}, where we also specify the rest of the finite data, consisting of $\Wbar G_3$ and the face maps between it and $\Wbar G_2$, which give the ternary multiplications. The degeneracy maps are omitted from the figure, but they can easily be recovered by using the simplicial identities. 

\begin{remark}
    The $\Wbar$ functor between bisimplicial sets and simplicial sets is right adjoint to the total decalage functor $\mathrm{Dec}$ (see \cite{CegarraRemedios2005} and references therein). Both constructions extend directly to simplicial and bisimplicial manifolds, so the adjunction still holds. Therefore, the $\Wbar$ functor preserves limits (for example pullbacks) between these two categories, when they exist. 
\end{remark}

\begin{figure}[h]
    \centering
    \includegraphics[width=\linewidth]{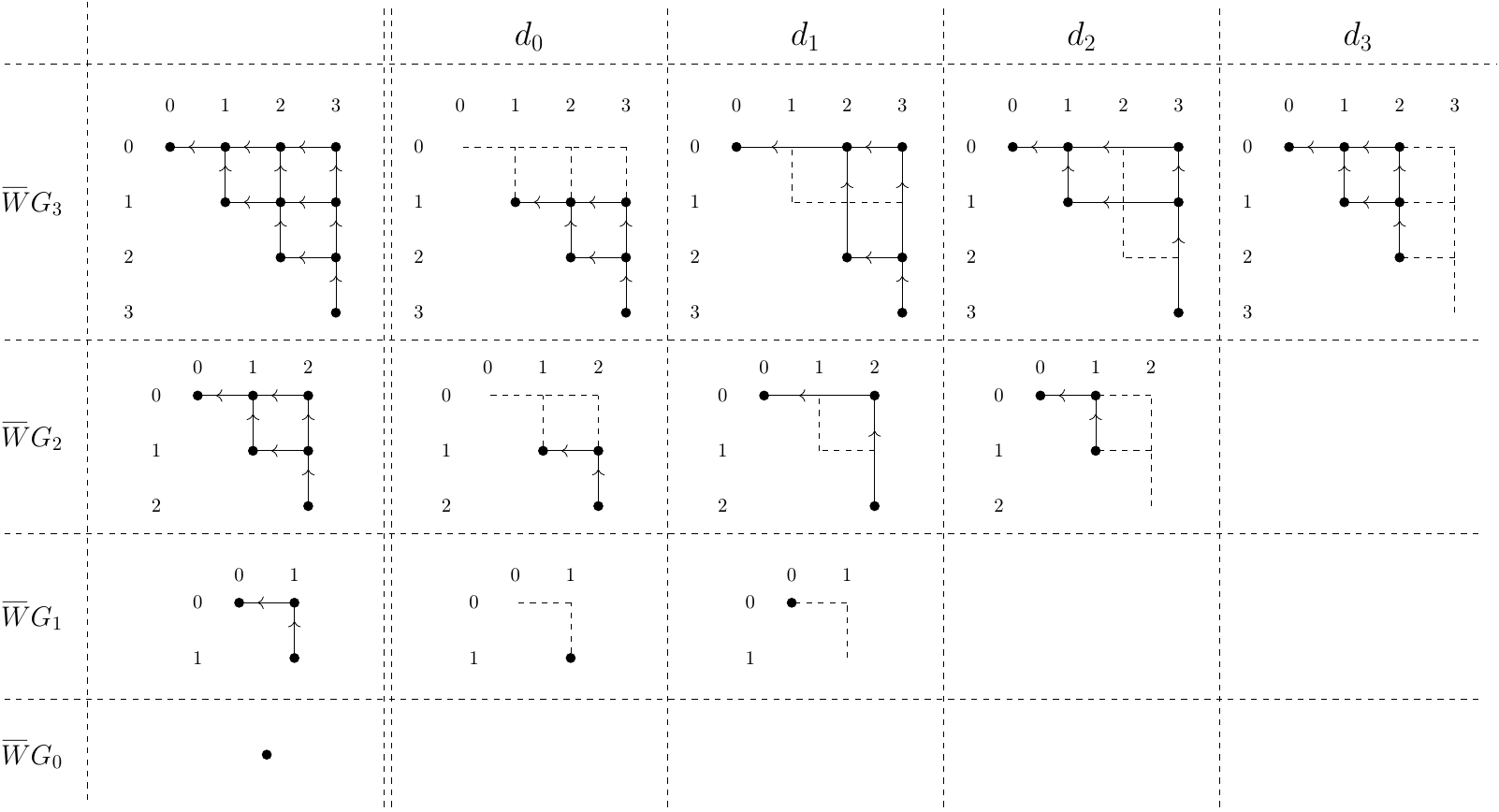}
    \caption{The first 4 levels of $\Wbar$ of a double Lie groupoid $G$ and its face maps. The heuristic goes as follows: $d_i$ of an $n$-simplex is obtained by first deleting the points and arrows in the $i$-th row and column, and then multiplying together any arrows or squares whose shared boundary component was deleted.}
    \label{fig:Wbar}
\end{figure}

In the following pair of horizontal/vertical dual lemmas we show that the extra condition needed for $\Wbar$ to map horizontal/vertical hypercovers to hypercovers is one of the same Kan conditions we found in Lemma \ref{lem:fullness-implies-extra-cond-horizontal}, Lemma \ref{lem:fullness-implies-extra-cond-vertical} and discussed in Remark \ref{rem:extra-cond-as-Kan-cond}. 
We prove the horizontal version of this statement explicitly and argue that the vertical version follows by duality.

\begin{lemma}\label{lem:Wbar-horizontal-hypercover}
    Let $f: G \twoheadrightarrow H$ be a horizontal levelwise hypercover of full double Lie groupoids. Then if $f$ is full, $\Wbar f$ is a hypercover of Lie 2-groupoids.
\end{lemma}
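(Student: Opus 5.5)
We have to check $\Acyc(0)$, $\Acyc(1)$ and $\Acyc!(2)$ for $F:=\Wbar f:\Wbar G\to\Wbar H$, using the explicit low levels of $\Wbar$ recalled above. Since $\Wbar G$ and $\Wbar H$ are Lie 2-groupoids (Mehta--Tang, using fullness of $G$ and $H$), these three conditions suffice. Throughout we use the four conditions \eqref{eq:hor-lvlwise-hc-conditions} together with fullness of $f$, i.e.\ that $(t,f_{01}):G_{01}\to G_{00}\times_{H_{00},t}H_{01}$ is a surjective submersion (equivalently $(s,f_{01})$).

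\emph{Step 1: $\Acyc(0)$.} This says $F_0=f_{00}:G_{00}\to H_{00}$ is a surjective submersion, which is exactly $\Acyc(0)_{\bullet 0}$.

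\emph{Step 2: $\Acyc(1)$.} We must show that
\[(\partial,F_1):G_{10}\times_{s,G_{00},t}G_{01}\longrightarrow (G_{00}\times G_{00})\times_{H_{00}\times H_{00}}(H_{10}\times_{s,H_{00},t}H_{01})\]
is a surjective submersion. Consider a target point $(x,y,h_{10},h_{01})$ with $t h_{10}=f(x)$ and $s h_{01}=f(y)$. First apply $(s,f_{01})$ to lift $h_{01}$ to some $g_{01}$ with $s g_{01}=y$. This map is a surjective submersion, and it is here that fullness of $f$ is used. Then $\Acyc!(1)_{\bullet 0}$ produces a unique $g_{10}$ with $t g_{10}=x$, $s g_{10}=t g_{01}$ and $f(g_{10})=h_{10}$. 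So the map factors as a base change of $(s,f_{01})$ followed by the diffeomorphism supplied by $\Acyc!(1)_{\bullet 0}$. Concretely, we identify the source with a pullback of the target along $(s,f_{01})$ via the isomorphism $G_{10}\cong (G_{00}\times G_{00})\times_{H_{00}^2}H_{10}$. Smoothness of the target space follows from $\Acyc(0)$, as noted after the definition of hypercovers.

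\emph{Step 3: $\Acyc!(2)$.} Using $\Wbar G_2\cong G_{10}\times G_{11}\times G_{01}$, a point of $\partial\Delta^2(\Wbar G)\times_{\partial\Delta^2(\Wbar H)}\Wbar H_2$ consists of three edges $(g_{10}^i,g_{01}^i)$ in $G$ forming a triangle, together with a filler $(h_{10},h_{11},h_{01})$ in $H$ whose faces are their images. The face $d_2$ forces $g_{10}=g^2_{10}$, and the face $d_0$ forces $g_{01}=g^0_{01}$. It remains to find a unique $g_{11}\in G_{11}$ with prescribed boundary:
\begin{itemize}
\item $t_{\bullet1}g_{11}=g^2_{01}$,
\item $s_{1\bullet}g_{11}=g^0_{10}$,
\item the relations $g^2_{10}t_{1\bullet}g_{11}=g^1_{10}$ and $s_{\bullet 1}g_{11}\,g^0_{01}=g^1_{01}$, which determine $t_{1\bullet}g_{11}$ and $s_{\bullet1}g_{11}$ uniquely using groupoid inverses,
\item $f(g_{11})=h_{11}$.
\end{itemize}
$\Acyc!(1)_{\bullet1}$ states that $G_{11}\cong(G_{01}\times G_{01})\times_{H_{01}^2}H_{11}$ via $(s_{\bullet1},t_{\bullet1},f_{11})$. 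So $g_{11}$ is uniquely determined by its horizontal source and target together with its image, and it depends smoothly on the data. It then remains to check that the vertical boundary $(s_{1\bullet}g_{11},t_{1\bullet}g_{11})$ agrees with the prescribed one. For this we use injectivity of $(\partial,f_{10})$, from $\Acyc!(1)_{\bullet0}$: both candidate arrows have the same endpoints in $G_{00}$ and the same image in $H_{10}$, since $f$ commutes with the structure maps. Hence the required map is a bijection with smooth inverse, i.e.\ a diffeomorphism.

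I expect the main obstacle to be Step 2. The naive lift of the data only covers one of the boundary vertices, and a lift of $h_{01}$ matching the prescribed endpoint in $G_{00}$ is exactly the Kan-type condition of Remark \ref{rem:extra-cond-as-Kan-cond}. Getting the submersion property, and not just surjectivity, requires writing the source explicitly as the pullback described in Step 2. Step 3 is bookkeeping, and the smoothness of the fiber products there follows from Step 2 by \cite[Lemma 2.4]{Zhu2009}.
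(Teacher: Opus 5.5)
Your proposal is correct and follows the paper's proof essentially step for step. You get $\Acyc(0)$ from $\Acyc(0)_{\bullet 0}$. For $\Acyc(1)$ you rewrite $\Wbar G_1\cong G_{00}\times_{H_{00}}H_{10}\times_{H_{00}}G_{01}$ via $\Acyc!(1)_{\bullet 0}$, so that $(\partial,(f_{10},f_{01}))$ becomes a base change of the full map $(s,f_{01})$. For $\Acyc!(2)$ you obtain the square from $\Acyc!(1)_{\bullet 1}$ and then match its vertical boundary using injectivity in $\Acyc!(1)_{\bullet 0}$.

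The only slip is verbal. The factorization in Step 2 is the diffeomorphism followed by the base change, not the other way round, and your concrete identification already shows this correctly.
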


\begin{lemma}\label{lem:Wbar-vertical-hypercover}
    Let $f: G \twoheadrightarrow H$ be a vertical levelwise hypercover of full double Lie groupoids. Then if $f$ is full, $\Wbar f$ is a hypercover of Lie 2-groupoids.
\end{lemma}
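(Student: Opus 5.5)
The statement to prove is Lemma~\ref{lem:Wbar-vertical-hypercover}, the vertical dual of Lemma~\ref{lem:Wbar-horizontal-hypercover}. I would not argue by abstract duality. The codiagonal $\Wbar$ is not symmetric in the two directions: $\Wbar G_1 = G_{10}\times_{s,G_{00},t}G_{01}$ places horizontal arrows first and vertical arrows last. Instead I would verify the conditions $\Acyc(0)$, $\Acyc(1)$ and $\Acyc!(2)$ for $\Wbar f$ directly, using the explicit low-degree description of $\Wbar G$ recalled above. The case analysis mirrors the horizontal one, with the roles of $G_{10}$ and $G_{01}$ swapped. Throughout, $f$ is a vertical levelwise hypercover, so $f_{0\bullet}$ and $f_{1\bullet}$ are hypercovers of Lie groupoids. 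Concretely, $f_{00}, f_{10}$ are surjective submersions, and $G_{01}\cong (G_{00}\times G_{00})\times_{H_{00}\times H_{00}}H_{01}$ and $G_{11}\cong (G_{10}\times G_{10})\times_{H_{10}\times H_{10}}H_{11}$. Fullness of $f$ means that $(s,f_{10})\colon G_{10}\to G_{00}\times_{H_{00},s}H_{10}$, equivalently $(t,f_{10})$, is a surjective submersion.

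For $\Acyc(0)$, the map $\Wbar f_0=f_{00}$ is a surjective submersion by hypothesis. For $\Acyc(1)$, the target space consists of pairs of points $x,y\in G_{00}$ together with $(h_{10},h_{01})\in \Wbar H_1$ satisfying $t h_{10}=f_{00}x$ and $s h_{01}=f_{00}y$. Given such data, I would first lift $h_{10}$ to some $g_{10}\in G_{10}$ with $t g_{10}=x$, using $(t,f_{10})$; this is exactly where fullness of $f$ enters. I would then take $g_{01}$ to be the unique arrow $y\to s g_{10}$ over $h_{01}$, which exists by $\Acyc!(1)$ for $f_{0\bullet}$. I would write this as a factorization of $(\partial,\Wbar f_1)$ into two maps. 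The first is a diffeomorphism, coming from the pullback description of $G_{01}$. The second is a base change of $(t,f_{10})$, hence a surjective submersion. Smoothness of the target space follows from $\Acyc(0)$, as noted after the definition of hypercover.

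For $\Acyc!(2)$, an element of the target space is a triple $(x_0,x_1,x_2)\in \partial\Delta^2(\Wbar G)$ together with $(h_{10},h_{11},h_{01})\in\Wbar H_2$ compatible with it. Using $d_2$, the arrow $g_{10}$ is already determined, and using $d_0$, so is $g_{01}$. What remains is to produce a unique square $g_{11}\in G_{11}$ with prescribed image $h_{11}$ and prescribed boundary. The bottom edge $s_{1\bullet}g_{11}$ comes from $d_0$ and the top edge $t_{1\bullet}g_{11}$ comes from $d_2$, both in $G_{10}$. The vertical sides are then pinned down through the compatibilities with $d_1$. This is precisely $\Acyc!(1)$ for the groupoid hypercover $f_{1\bullet}$, namely the isomorphism $G_{11}\cong (G_{10}\times G_{10})\times_{H_{10}\times H_{10}}H_{11}$. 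Uniqueness of the vertical sides of $g_{11}$ comes from $\Acyc!(1)$ for $f_{0\bullet}$, and this is also where the $d_1$ component is checked consistently. Smoothness of this target space uses $\Acyc(1)$, which was established in the previous step.

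The main obstacle is the bookkeeping in $\Acyc!(2)$. The face $d_1$ of $\Wbar G_2$ composes $g_{10}$ with $t_{1\bullet}g_{11}$ and $s_{\bullet1}g_{11}$ with $g_{01}$. I therefore need to check that the boundary data in $\partial\Delta^2$ forces exactly the four edges of $g_{11}$, and that the resulting $g_{11}$ actually reproduces the given $d_1$. I would handle this pictorially, in the style of Figure~\ref{fig:Wbar}. Once the four edges are fixed, the $\Acyc!(1)$ isomorphisms for both $f_{0\bullet}$ and $f_{1\bullet}$ give existence and uniqueness, and smoothness comes for free since all the maps involved are built from diffeomorphisms and base changes.
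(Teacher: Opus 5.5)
Your proposal is correct, and in substance it is the paper's argument. The paper writes out only the horizontal case (Lemma~\ref{lem:Wbar-horizontal-hypercover}) and obtains this one ``by duality''; you carry out the transposed computation explicitly instead. Your $\Acyc(1)$ step mirrors the paper's: you rewrite $G_{01}$ via $\Acyc!(1)$ for $f_{0\bullet}$ and then take a base change of the fullness map $(t,f_{10})$, where the paper uses $\Acyc!(1)_{\bullet 0}$ and $(f_{01},s)$. Your $\Acyc!(2)$ step fills the square with $\Acyc!(1)$ for $f_{1\bullet}$ along its horizontal edges and then matches its vertical edges with $\Acyc!(1)$ for $f_{0\bullet}$, which is the mirror image of the paper's use of $\Acyc!(1)_{\bullet 1}$ and $\Acyc!(1)_{\bullet 0}$.

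What your route buys is the point you raise yourself: $\Wbar$ does not commute with transposition on the nose. Already in degree one, the natural identification of $\Wbar G_1$ with the degree-one part of $\Wbar$ of the transposed double groupoid is $(a,v)\mapsto(v^{-1},a^{-1})$, and this exchanges $d_0$ and $d_1$. So ``by duality'' hides an identification with an opposite simplicial manifold via the inversions, whereas your check is self-contained. The paper's route is simply shorter.

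One bookkeeping correction in $\Acyc!(2)$: the top edge does not come from $d_2$. Write the boundary as $x_i=(a_i,v_i)\in G_{10}\times_{s,G_{00},t}G_{01}$ and $\eta=(h_{10},h_{11},h_{01})$.
\begin{itemize}
\item Since $d_2(g_{10},g_{11},g_{01})=(g_{10},t_{\bullet 1}g_{11})$, the face $d_2$ prescribes $g_{10}=a_2$ and the \emph{left} side $t_{\bullet 1}g_{11}=v_2$.
\item The face $d_0$ prescribes $g_{01}=v_0$ and the bottom $s_{1\bullet}g_{11}=a_0$.
\item The face $d_1$ then forces the top $t_{1\bullet}g_{11}=a_2^{-1}a_1$ and the right side $s_{\bullet 1}g_{11}=v_1v_0^{-1}$.
\end{itemize}
The relations $d_i\eta=f(x_i)$ give $s_{1\bullet}h_{11}=f(a_0)$ and $t_{1\bullet}h_{11}=f(a_2^{-1}a_1)$, so $\Acyc!(1)$ for $f_{1\bullet}$ yields a unique $g_{11}$. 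Its left and right sides have the same endpoints and the same images under $f$ as $v_2$ and $v_1v_0^{-1}$. Hence $\Acyc!(1)$ for $f_{0\bullet}$ identifies them, and $(a_2,g_{11},v_0)$ has the prescribed faces. With the faces relabeled this way, your argument goes through unchanged.
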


\begin{proof}[Proof of Lemma \ref{lem:Wbar-horizontal-hypercover}]
    We show $\Acyc(0)$, $\Acyc(1)$ and $\Acyc!(2)$ for $\Wbar f$, which is
\[\begin{tikzcd}[ampersand replacement=\&,cramped, sep=scriptsize]
	{G_{10}\times_{s, G_{00}, t^2}G_{11}\times_{s^2, G_{00}, t}G_{01}} \& {G_{10} \times_{s, G_{00}, t} G_{01}} \& {G_{00}} \\
	{H_{10}\times_{s, H_{00}, t^2}H_{11}\times_{s^2, H_{00}, t}H_{01}} \& {H_{10}\times_{s, H_{00}, t} H_{01}} \& {H_{00}}
	\arrow[shift right=2, from=1-1, to=1-2]
	\arrow[shift left=2, from=1-1, to=1-2]
	\arrow[from=1-1, to=1-2]
	\arrow["{(f_{10},f_{11},f_{01})}", from=1-1, to=2-1]
	\arrow[shift left, from=1-2, to=1-3]
	\arrow[shift right, from=1-2, to=1-3]
	\arrow["{(f_{10},f_{01})}", from=1-2, to=2-2]
	\arrow["{f_{00}}", from=1-3, to=2-3]
	\arrow[shift right=2, from=2-1, to=2-2]
	\arrow[shift left=2, from=2-1, to=2-2]
	\arrow[from=2-1, to=2-2]
	\arrow[shift left, from=2-2, to=2-3]
	\arrow[shift right, from=2-2, to=2-3]
\end{tikzcd}\]
    Immediately we have that $\Acyc(0)$ follows from the fact that $f_{00}$ is a surjective submersion, which is already true by $\Acyc(0)_{\bullet 0}$.

    For $\Acyc(1)$, we need to show that the map 
    \begin{equation*}
        (\partial, (f_{10}, f_{01})): \Wbar G_1 \to \partial\Delta^1(\Wbar G) \times_{\partial\Delta^1(\Wbar H)} \Wbar H_1
    \end{equation*}
    is a surjective submersion. See Figure \ref{fig:hor-hypercover-acyc1} for a pictorial representation of the following argument. The codomain of $ (\partial, (f_{10}, f_{01}))$ can be written as 
    \begin{equation*}
        \Wbar G_0 \times_{\Wbar H_0, d_1} \Wbar H_1 \times_{d_0, \Wbar H_0} \Wbar G_0 
        = G_{00} \times_{H_{00}, t} H_{10} \times_{s, H_{00}, t} H_{01} \times_{s, H_{00}} G_{00}, 
    \end{equation*}
    where we omit $\Wbar f$ and $f$ in the notation of the fiber products. 
    By using $\Acyc!(1)_{\bullet 0}$ from \eqref{eq:hor-lvlwise-hc-conditions}, we get that
    \begin{equation*}
    \begin{split}
        \Wbar G_1 = G_{10}\times_{G_{00}} G_{01} 
        &\cong  (G_{00} \times_{H_{00},t} H_{10} \times_{s,H_{00}} G_{00}) \times_{\text{pr}_3,G_{00},t} G_{01}\\
        &\cong  G_{00} \times_{H_{00},t} H_{10} \times_{s, H_{00},ft} G_{01}.
    \end{split}
    \end{equation*}
    Then, the following is a pullback diagram:
\[\begin{tikzcd}[ampersand replacement=\&,cramped,row sep=small]
	{\Wbar G_1} \&\& {\partial\Delta^1(\Wbar G) \times_{\partial\Delta^1(\Wbar H)} \Wbar H_1} \& {G_{00} \times_{H_{00},t} H_{10} } \\
	\\
	{G_{01}} \&\& { H_{01} \times_{s, H_{00}} G_{00}} \& {H_{00},}
	\arrow["{(\partial, (f_{10}, f_{01}))}", from=1-1, to=1-3]
	\arrow[from=1-1, to=3-1]
	\arrow["\lrcorner"{anchor=center, pos=0.125}, draw=none, from=1-1, to=3-3]
	\arrow[from=1-3, to=1-4]
	\arrow[from=1-3, to=3-3]
	\arrow["\lrcorner"{anchor=center, pos=0.125}, draw=none, from=1-3, to=3-4]
	\arrow["{s\circ pr_2}", from=1-4, to=3-4]
	\arrow["{(f_{01}, s)}"', two heads, from=3-1, to=3-3]
	\arrow["{t \circ pr_1}"', from=3-3, to=3-4]
\end{tikzcd}\]
    where the map $(f_{01},s)$ is a surjective submersion. Hence, $(\partial, (f_{10}, f_{01}))$ is a surjective submersion. 

    \begin{figure}[h]
        \centering
        \includegraphics[width=0.9\linewidth]{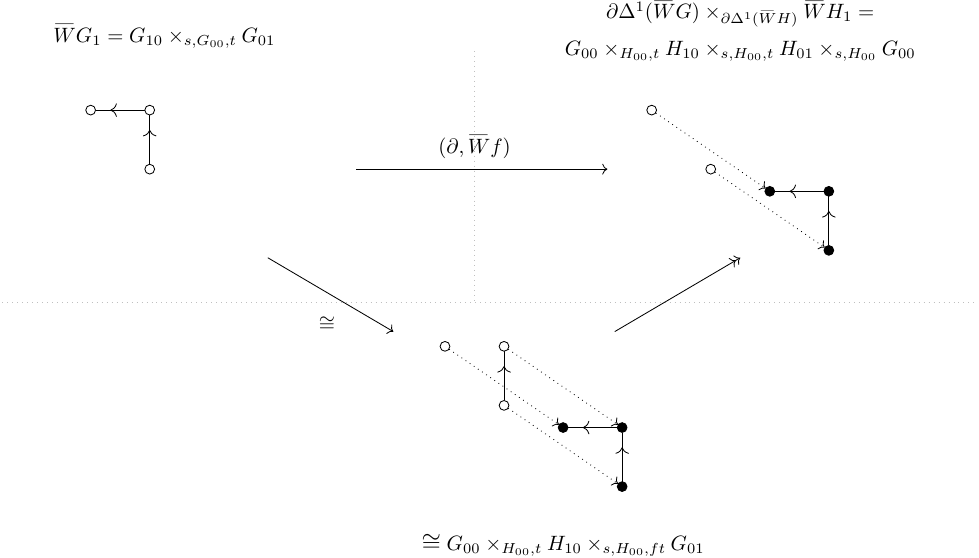}
        \caption{Proving $\Acyc(1)$.}
        \label{fig:hor-hypercover-acyc1}
    \end{figure}
    
    For $\Acyc!(2)$, we need to show that the map 
     \begin{equation*}
        (\partial, (f_{10}, f_{11}, f_{01})): \Wbar G_2 \to \partial\Delta^2(\Wbar G) \times_{\partial\Delta^2(\Wbar H)} \Wbar H_2
    \end{equation*}
    is a diffeomorphism. We depict this proof in Figure \ref{fig:hor-hypercover-acyc2}. 
    
    An element of $\partial\Delta^2(\Wbar G)$ can be depicted as in the second panel of Figure \ref{fig:hor-hypercover-acyc2}. Thus, we can write
    \begin{equation*}
        \partial\Delta^2(\Wbar G) \cong \Lambda^2_0(G_{\bullet 0}) \times_{s d_1,G_{00}, t} G_{01} \times_{s, G_{00}, t} G_{10} \times_{s, G_{00}, td_0} \Lambda^2_2(G_{0\bullet}).
    \end{equation*}
    
    \begin{figure}[h]
        \centering
        \includegraphics[width=\linewidth]{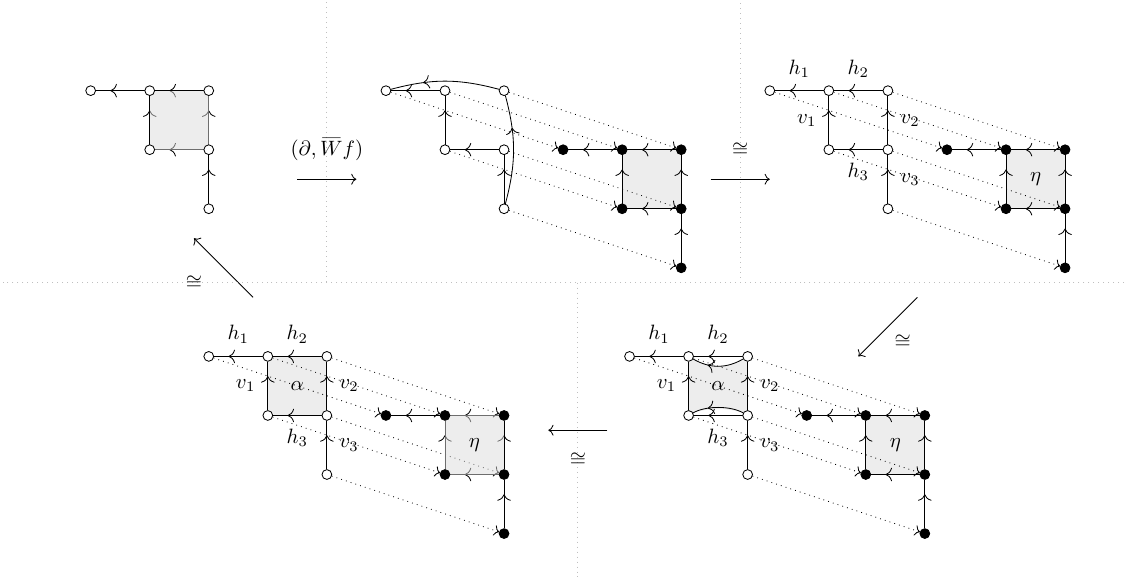}
        \caption{The main steps in the proof of $\Acyc!(2)$.}
        \label{fig:hor-hypercover-acyc2}
    \end{figure}
    
    Because both $G_{\bullet 0}$ and $G_{0\bullet}$ are 1-groupoids, we can fill the two horn spaces above uniquely, so we can depict $\partial\Delta^2(\Wbar G) \times_{\partial\Delta^2(\Wbar H)} \Wbar H_2$ as in the third panel of Figure \ref{fig:hor-hypercover-acyc2}. Then, $\Acyc!(2)$ is equivalent to the existence of a unique square $\alpha$ with 
    \begin{equation*}
        s_{\bullet 1}\alpha = v_2, \quad t_{\bullet 1}\alpha = v_1, \quad s_{1\bullet }\alpha = h_3, \quad t_{1 \bullet}\alpha = h_2,
    \end{equation*}
    and such that $f_{11}(\alpha)=\eta$, for any diagram as in the third panel of Figure \ref{fig:hor-hypercover-acyc2}. 
    
    Because $(f(v_2), f(v_1)) = (s_{\bullet 1}\eta, t_{\bullet 1}\eta)$, $(v_2, v_1, \eta) \in (G_{01} \times G_{01}) \times_{H_{01}\times H_{01}} H_{11}$. 
    By $\Acyc!(1)_{\bullet 1}$, this space is isomorphic to $G_{11}$, so there exists a unique $\alpha$ such that $f_{11}(\alpha)=\eta$, $s_{\bullet1}\alpha = v_2$, and $t_{\bullet 1} \alpha = v_1$.
    Now $(f(s_{1\bullet}\alpha), f(t_{1\bullet}\alpha)) = (f(h_3), f(h_2)) = (s_{1\bullet}\eta, t_{1\bullet}\eta)$, so by $\Acyc!(1)_{\bullet 0}$, $(s_{1\bullet}\alpha, t_{1\bullet}\alpha) = (h_3, h_2)$. We thus produced the required $\alpha$.
\end{proof}

Following Remark \ref{rem:extra-cond-as-Kan-cond}, the fullness condition that requires $(s, f_{01}): G_{01} \rightarrow G_{00}\times_{f_{00},H_{00},s}H_{01}$ to be a surjective submersion is the Kan condition $\Kan(1,1)$ for the groupoid morphism $f_{0\bullet}$. 
    
\begin{example} We produce an example of a horizontal levelwise hypercover of double groupoids that is sent by $\Wbar$ to a simplicial morphism which is not a hypercover. Let $V$ be the disjoint union of the pair groupoid over $\{a,b\}$ and of the unit groupoid on $\{c\}$, and let $H$ be the pair groupoid over $\{0,1\}$. Consider the morphism of groupoids $f:V\to H$ determined by mapping $a$ and $c$ to $0$ and $b$ to $1$. Now take the respective pair double groupoids over $V$ and $H$ with the additional pair groupoid structure defined horizontally. We have that $f$ extends to a morphism of double groupoids which is a horizontal levelwise hypercover, but
the arrow $(0\to 1)\in H$ has no lift to $V$ which starts at $c$; in other words, \eqref{eq:hor-extra-condition-map} is not surjective.          
\[\begin{tikzcd}
	{V^2} & V && {H^2} & H \\
	\\
	{\{a,b,c\}^2} & {\{a,b,c\}} && {\{0,1\}^2} & {\{0,1\}}
	\arrow[shift right, from=1-1, to=1-2]
	\arrow[shift left, from=1-1, to=1-2]
	\arrow[shift right, from=1-1, to=3-1]
	\arrow[shift left, from=1-1, to=3-1]
	\arrow[shift right, from=1-2, to=3-2]
	\arrow[""{name=0, anchor=center, inner sep=0}, shift left, from=1-2, to=3-2]
	\arrow[shift right, from=1-4, to=1-5]
	\arrow[shift left, from=1-4, to=1-5]
	\arrow[""{name=1, anchor=center, inner sep=0}, shift right, from=1-4, to=3-4]
	\arrow[shift left, from=1-4, to=3-4]
	\arrow[shift right, from=1-5, to=3-5]
	\arrow[shift left, from=1-5, to=3-5]
	\arrow[shift right, from=3-1, to=3-2]
	\arrow[shift left, from=3-1, to=3-2]
	\arrow[shift right, from=3-4, to=3-5]
	\arrow[shift left, from=3-4, to=3-5]
	\arrow[between={0.2}{0.8}, from=0, to=1]
\end{tikzcd}\]
Hence, by taking $\Wbar$ of this map, we can see that the result is not a hypercover.
\end{example}
    
As we showed in Lemmas \ref{lem:fullness-implies-extra-cond-horizontal} and \ref{lem:fullness-implies-extra-cond-vertical}, generalized morphisms between full double groupoids produce anafunctors whose levelwise hypercovers are always full. Hence, by combining Lemma \ref{lem:Wbar-horizontal-hypercover} with Lemma \ref{lem:fullness-implies-extra-cond-horizontal}, and Lemma \ref{lem:Wbar-vertical-hypercover} with Lemma \ref{lem:fullness-implies-extra-cond-vertical} we obtain the following theorems.

\begin{theorem}\label{thm:HorGenMorphToAna-Wbar}
Let $G \overset{A}{\dashrightarrow}_h J$ be a horizontal generalized morphism of full double Lie groupoids. Let $ G\overset{\sim_h}{\twoheadleftarrow} \hortilde{A} \to J$ be the associated horizontal anafunctor of double Lie groupoids. Then, $\Wbar ( 
G\overset{\sim}{\twoheadleftarrow} \hortilde{A} \to J)$ is an anafunctor of Lie 2-groupoids.
\end{theorem}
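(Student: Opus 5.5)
The plan is to check the two ingredients of an anafunctor of Lie 2-groupoids separately. We need (i) that $\Wbar G$, $\Wbar J$ and $\Wbar\hortilde{A}$ are Lie 2-groupoids, and (ii) that the domain-side leg $\Wbar\widetilde{r}\colon \Wbar\hortilde{A}\to\Wbar G$ is a hypercover. The codomain-side leg $\Wbar\widetilde{l}$ only needs to be a simplicial map. That is automatic, because $\Wbar$ is a functor on bisimplicial manifolds and $\widetilde{l}$ is a morphism of double Lie groupoids.

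For (i), $G$ and $J$ are full by hypothesis. By Lemma \ref{lem:HorGenMorphToHorAna}, $\hortilde{A}$ is a double Lie groupoid, and it is full because both $G$ and $J$ are full. Applying \cite[Theorem 4.5]{MehtaTang2011} (the Proposition recalled at the start of this section) to each of $G$, $J$ and $\hortilde{A}$ then shows that all three codiagonals are Lie 2-groupoids.

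For (ii), first invoke Lemma \ref{lem:HorGenMorphToHorAna} again: $\widetilde{r}\colon\hortilde{A}\overset{\sim_h}{\twoheadrightarrow}G$ is a horizontal levelwise hypercover. Next, Lemma \ref{lem:fullness-implies-extra-cond-horizontal} says that $\widetilde{r}$ is full when $J$ is full, which holds here. Concretely, $(s,r_1)\colon A_1\to A_0\times_{r_0,G_{00},s}G_{01}$ is a surjective submersion. So $\widetilde{r}$ is a full horizontal levelwise hypercover between full double Lie groupoids, and Lemma \ref{lem:Wbar-horizontal-hypercover} gives that $\Wbar\widetilde{r}$ is a hypercover of Lie 2-groupoids. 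The span $\Wbar G\overset{\sim}{\twoheadleftarrow}\Wbar\hortilde{A}\to\Wbar J$ is therefore an anafunctor.

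Essentially no new argument is needed; the proof just assembles earlier results. The only point needing care is matching hypotheses. Fullness of $\hortilde{A}$ requires fullness of both $G$ and $J$. Fullness of the leg $\widetilde{r}$ uses the fullness of $J$, the double groupoid acting principally, rather than that of $G$. Lemma \ref{lem:Wbar-horizontal-hypercover} needs both source and target of $\widetilde{r}$ to be full. Once these are lined up, the statement follows, and the vertical analogue follows by duality from Lemmas \ref{lem:VerGenMorphToVerAna}, \ref{lem:fullness-implies-extra-cond-vertical} and \ref{lem:Wbar-vertical-hypercover}.
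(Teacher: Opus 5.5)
Your proposal is correct and follows the paper's own argument. The paper obtains the theorem by combining Lemma~\ref{lem:fullness-implies-extra-cond-horizontal} (fullness of $J$ makes $\widetilde{r}$ full) with Lemma~\ref{lem:Wbar-horizontal-hypercover}. Your extra check that $\Wbar\hortilde{A}$, $\Wbar G$ and $\Wbar J$ are Lie 2-groupoids, via Lemma~\ref{lem:HorGenMorphToHorAna} and \cite{MehtaTang2011}, just makes explicit what the paper leaves implicit.
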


\begin{theorem}\label{thm:VerGenMorphToAna-Wbar}
    Let $G \overset{C}{\dashrightarrow}_v H$ be a vertical generalized morphism of full double Lie groupoids. Let $ G\overset{\sim_v}{\twoheadleftarrow} \vertilde{C} \to H$ be the associated span of double Lie groupoids. Then, $\Wbar (G\overset{\sim}{\twoheadleftarrow} \vertilde{C} \to H )$ is an anafunctor of Lie 2-groupoids. 
\end{theorem}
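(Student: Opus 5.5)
Theorem \ref{thm:VerGenMorphToAna-Wbar} follows by vertical/horizontal duality from the argument for Theorem \ref{thm:HorGenMorphToAna-Wbar}. We therefore only need to assemble Lemmas \ref{lem:VerGenMorphToVerAna}, \ref{lem:fullness-implies-extra-cond-vertical} and \ref{lem:Wbar-vertical-hypercover}. The plan has three steps.

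First, apply Lemma \ref{lem:VerGenMorphToVerAna} to $C$. Since $G$ and $H$ are full, $\vertilde{C}$ is a full double Lie groupoid, and $\widetilde{r}:\vertilde{C}\overset{\sim_v}{\twoheadrightarrow} G$ is a vertical levelwise hypercover. By \cite[Theorem 4.5]{MehtaTang2011}, $\Wbar G$, $\Wbar H$ and $\Wbar\vertilde{C}$ are therefore Lie 2-groupoids. Applying the functor $\Wbar$ to the span then gives a span of simplicial maps $\Wbar G\xleftarrow{\Wbar\widetilde{r}}\Wbar\vertilde{C}\xrightarrow{\Wbar\widetilde{l}}\Wbar H$ between Lie 2-groupoids.

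Second, Lemma \ref{lem:fullness-implies-extra-cond-vertical}, which uses that $H$ is full, shows that $\widetilde{r}$ is full as a vertical levelwise hypercover. That is, $(s,r_1)$ and $(t,r_1)$ on $\vertilde{C}_{10}=C_1$ are surjective submersions.

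Third, Lemma \ref{lem:Wbar-vertical-hypercover} applies to the full vertical levelwise hypercover $\widetilde{r}$ between full double Lie groupoids. It yields that $\Wbar\widetilde{r}$ is a hypercover of Lie 2-groupoids, so the span is an anafunctor.

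The main obstacle is that Lemma \ref{lem:Wbar-vertical-hypercover} has been stated but its proof is only asserted by duality. To be thorough, I would transpose the proof of Lemma \ref{lem:Wbar-horizontal-hypercover} explicitly. $\Acyc(0)$ comes from $f_{00}$ being a surjective submersion.

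For $\Acyc(1)$, the roles reverse. Use $\Acyc!(1)_{0\bullet}$ (the column groupoid condition) to rewrite
\[\Wbar G_1\cong G_{10}\times_{ft,H_{00},s}H_{01}\times_{t,H_{00}}G_{00}.\]
Then the fullness map $(t,f_{10})$, or $(s,f_{10})$ depending on orientation, of the vertical hypercover gives a pullback square, and this shows $(\partial,\Wbar f_1)$ is a surjective submersion.

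For $\Acyc!(2)$, fill the horns in the 1-groupoids $G_{\bullet 0}$ and $G_{0\bullet}$ uniquely. Then use $\Acyc!(1)_{1\bullet}$ to obtain a unique square $\alpha$ lifting $\eta$ with prescribed horizontal edges. Finally use $\Acyc!(1)_{0\bullet}$ to check that its vertical edges agree with the given ones.

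The only care needed is matching the orientation conventions of $\Wbar$, namely the asymmetric roles of $G_{10}$ and $G_{01}$ in $\Wbar G_1$, so that the correct one of $(s,f_{10})$ and $(t,f_{10})$ appears. This is harmless, since both are surjective submersions.
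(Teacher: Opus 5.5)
Your proposal is correct and follows the paper's argument: the theorem is obtained by combining Lemma~\ref{lem:VerGenMorphToVerAna}, Lemma~\ref{lem:fullness-implies-extra-cond-vertical} (which uses fullness of $H$) and Lemma~\ref{lem:Wbar-vertical-hypercover}; the paper justifies the last of these only by duality, and your transposed sketch fills that in correctly. One small slip: with the paper's convention $\Wbar G_1=G_{10}\times_{s,G_{00},t}G_{01}$, the rewriting in your $\Acyc(1)$ step should read $G_{10}\times_{fs,H_{00},t}H_{01}\times_{s,H_{00},f}G_{00}$, so the fullness map that appears is $(t,f_{10})$; as you anticipated, this is harmless.
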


\begin{theorem}\label{thm:11MorphToSquareAna-Wbar}
    Let $(\Omega,A^{JG},B_J^K,C_G^H,D^{KH})$ be a $(1,1)$-morphism with sides $A:G\hormor J$, $B:J\vermor K$, $C:G\vermor H$, $D:H\hormor K$ as in \eqref{diag:1-1-morphism}. Assume that $G$, $H$, $J$, $K$ are full. Then the diagram 
    \begin{equation}\label{diag:Wbar-double-anafunctor}
	\begin{tikzcd}[ampersand replacement=\&,cramped]
	{\Wbar K} \& {\Wbar (\hortilde{D})} \& {\Wbar H} \\
	{\Wbar (\vertilde{B})} \& {\Wbar (\widetilde{\widetilde{\Omega}})} \& {\Wbar (\vertilde{C})} \\
	{\Wbar J} \& {\Wbar (\hortilde{A})} \& {\Wbar G,}
	\arrow[from=1-2, to=1-1]
	\arrow["\sim", two heads, from=1-2, to=1-3]
	\arrow[from=2-1, to=1-1]
	\arrow["\sim", two heads, from=2-1, to=3-1]
	\arrow[from=2-2, to=1-2]
	\arrow[from=2-2, to=2-1]
	\arrow["\sim", two heads, from=2-2, to=2-3]
	\arrow["\sim", two heads, from=2-2, to=3-2]
	\arrow[from=2-3, to=1-3]
	\arrow["\sim", two heads, from=2-3, to=3-3]
	\arrow[from=3-2, to=3-1]
	\arrow["\sim", two heads, from=3-2, to=3-3]
    \end{tikzcd}
	\end{equation}
    is a $(1,1)$-morphism in $\mathsf{2}\Ana(\mathsf{Lie_2}\Gpd)$. 
\end{theorem}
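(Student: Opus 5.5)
The plan is to reduce the statement to the facts already established for the double span \eqref{diag:double-span-total-action-of-square}, and then push them through $\Wbar$. We must check three things. First, every object in \eqref{diag:Wbar-double-anafunctor} is a Lie 2-groupoid. Second, the diagram commutes. Third, each of the four double-headed arrows is a hypercover, so that every row and column is an anafunctor of Lie 2-groupoids.

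Commutativity is immediate. The diagram \eqref{diag:double-span-total-action-of-square} commutes as a diagram of double Lie groupoids, and $\Wbar$ is a functor.

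For the objects, $G,H,J,K$ are full by hypothesis. Lemma \ref{lem:HorGenMorphToHorAna} shows that $\hortilde{A}$ and $\hortilde{D}$ are full, and Lemma \ref{lem:VerGenMorphToVerAna} does the same for $\vertilde{B}$ and $\vertilde{C}$. For the center, Lemma \ref{lem:vertilde-omega-is-hor-gen-morph} exhibits $\vertilde{\Omega}$ as a horizontal generalized morphism $\vertilde{C}\hormor\vertilde{B}$ between full double Lie groupoids. Applying Lemma \ref{lem:HorGenMorphToHorAna} once more gives that $\widetilde{\widetilde{\Omega}}=\hortilde{\vertilde{\Omega}}$ is full. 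Then \cite[Theorem 4.5]{MehtaTang2011} makes all nine codiagonals Lie 2-groupoids.

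For the four hypercovers, the outer ones $\Wbar\hortilde{A}\to\Wbar G$, $\Wbar\hortilde{D}\to\Wbar H$, $\Wbar\vertilde{B}\to\Wbar J$ and $\Wbar\vertilde{C}\to\Wbar G$ are handled by Theorems \ref{thm:HorGenMorphToAna-Wbar} and \ref{thm:VerGenMorphToAna-Wbar}. These combine Lemmas \ref{lem:fullness-implies-extra-cond-horizontal}/\ref{lem:fullness-implies-extra-cond-vertical} with Lemmas \ref{lem:Wbar-horizontal-hypercover}/\ref{lem:Wbar-vertical-hypercover}. The two central maps need more care.

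The map $\widetilde{\widetilde{\Omega}}\to\vertilde{C}$ is the $\widetilde r$ of the horizontal generalized morphism $\vertilde{\Omega}:\vertilde{C}\hormor\vertilde{B}$. Lemma \ref{lem:doubleheadsRhypercovers} already says it is a horizontal levelwise hypercover. Since the acting double groupoid $\vertilde{B}$ is full, Lemma \ref{lem:fullness-implies-extra-cond-horizontal} applied to this generalized morphism shows the hypercover is full. Lemma \ref{lem:Wbar-horizontal-hypercover} then makes $\Wbar$ of it a hypercover.

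Dually, $\widetilde{\widetilde{\Omega}}\to\hortilde{A}$ is the $\widetilde r$ of the vertical generalized morphism $\hortilde{\Omega}:\hortilde{A}\vermor\hortilde{D}$ from Lemma \ref{lem:hortilde-omega-is-vert-gen-morph}. Here one uses the identification $\vertilde{\hortilde{\Omega}}\cong\hortilde{\vertilde{\Omega}}$, and the fullness of $\hortilde{D}$. Lemmas \ref{lem:fullness-implies-extra-cond-vertical} and \ref{lem:Wbar-vertical-hypercover} then conclude.

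The main obstacle is this central step. The identification $\vertilde{\hortilde{\Omega}}\cong\hortilde{\vertilde{\Omega}}$ must be compatible with the two legs, so that the vertical leg of $\widetilde{\widetilde{\Omega}}$ really is the $\widetilde r$ of $\hortilde\Omega$ under the isomorphism. I would verify this by writing both iterated fiber products explicitly at level $(1,1)$ and checking that the projections agree. One also has to note that the hypotheses of Lemmas \ref{lem:fullness-implies-extra-cond-horizontal} and \ref{lem:fullness-implies-extra-cond-vertical} are met: they only need fullness of the principally acting double groupoid, which we have. Isomorphisms are hypercovers, so transporting along the identification is harmless.
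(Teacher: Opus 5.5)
Your proposal is correct and follows essentially the paper's argument. The paper derives this theorem from Lemma \ref{lem:doubleheadsRhypercovers} together with the fullness Lemmas \ref{lem:fullness-implies-extra-cond-horizontal}/\ref{lem:fullness-implies-extra-cond-vertical}, applied to the outer bibundles and to $\vertilde{\Omega}$, $\hortilde{\Omega}$, and with Lemmas \ref{lem:Wbar-horizontal-hypercover}/\ref{lem:Wbar-vertical-hypercover}; fullness of all nine double groupoids comes from Lemmas \ref{lem:HorGenMorphToHorAna}/\ref{lem:VerGenMorphToVerAna}, and the compatibility of $\vertilde{\hortilde{\Omega}}\cong\hortilde{\vertilde{\Omega}}$ with the legs, which you flag, is the ``straightforward computation'' the paper asserts when defining $\widetilde{\widetilde{\Omega}}$.
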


\begin{lemma}
    Applying $\Wbar$ to a 2-morphism of full horizontal (resp. vertical) anafunctors of full double groupoids gives a 2-morphism of anafunctors of Lie 2-groupoids. In the same way, applying $\Wbar$ to a (2,1)-, (1,2)-, (2,2)-morphism of full double anafunctors of full double groupoids gives a (2,1)-, (1,2)-, (2,2)-morphism of double anafunctors of Lie 2-groupoids, respectively.
\end{lemma}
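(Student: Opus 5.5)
The plan is to reduce everything to three facts. First, $\Wbar$ is a functor on bisimplicial manifolds. Second, it preserves pullbacks, since it is a right adjoint to $\mathrm{Dec}$. Third, it takes the relevant full horizontal/vertical levelwise hypercovers to hypercovers, by Lemmas \ref{lem:Wbar-horizontal-hypercover} and \ref{lem:Wbar-vertical-hypercover}.

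Consider a 2-morphism of full horizontal anafunctors $G \overset{\sim_h}{\twoheadleftarrow} A \to J$, $G \overset{\sim_h}{\twoheadleftarrow} A' \to J$, with horizontal natural transformation $\alpha$ as in \eqref{diag:2-hom-hor-anafunctor}. The proof would proceed in the following steps.

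\begin{enumerate}
\item The pullback $A\times_G A'$ is a double Lie groupoid, and its projections are horizontal levelwise hypercovers by Remark \ref{rem:pullback-hor-hypercover}.
\item Fullness is stable under pullback. The map $(s,\mathrm{pr}_1)$ for $\mathrm{pr}_1 : A\times_GA'\to A$ is a base change of the map $(s,f'_{01})$ for the leg $A' \to G$. Hence it is a surjective submersion.
\item Lemma \ref{lem:Wbar-horizontal-hypercover} then shows that $\Wbar(A\times_GA')\to \Wbar A$ and $\Wbar(A\times_GA')\to\Wbar A'$ are hypercovers.
\item Since $\Wbar$ preserves limits, $\Wbar(A\times_GA')\cong \Wbar A\times_{\Wbar G}\Wbar A'$. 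So the shape of \eqref{diag:2-hom-ananatural} is reproduced.
\end{enumerate}

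It remains to turn $\alpha$ into a 1-homotopy. We use the description of $\alpha$ as a bisimplicial map $(A\times_GA')\times\Delta^{1,0}\to J$. Applying $\Wbar$ gives $\Wbar((A\times_GA')\times \Delta^{1,0})\to \Wbar J$, and $\Wbar$ commutes with products. So we only need a simplicial map $\Delta^1\to \Wbar\Delta^{1,0}$ restricting to the two endpoints. Precomposing with $\mathrm{id}\times$ this map yields $\Wbar(A\times_GA')\otimes\Delta^1\to\Wbar J$, i.e. an element of $\Maps(\Wbar(A\times_GA'),\Wbar J)_1$ between the two composites. Its class in $\Pi_1\Maps$ is the required 2-morphism.

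For the map $\Delta^1\to\Wbar\Delta^{1,0}$, note that $\Delta^{1,0}$ has rows $\Delta^1$ and discrete columns. Hence $(\Wbar\Delta^{1,0})_n$ consists of tuples $(x_0,\dots,x_n)$ with $x_a\in\Delta^1_{n-a}$ and vertical faces trivial. This is concretely the nerve of the pair groupoid on $\{0,1\}$, which receives $\Delta^1$. Alternatively, one can build the homotopy by hand from $\alpha: A_{0\bullet}\to J_{1\bullet}$: on $\Wbar_1$, send $(a_{10},a_{01})$ to the 2-simplex $(\alpha(t a_{10})\cdot\ldots)$ using units in the vertical direction. The structural route via $\Wbar\Delta^{1,0}$ is cleaner, so I would use it.

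The vertical case is dual, using $\Delta^{0,1}$ and Lemma \ref{lem:Wbar-vertical-hypercover}. For (2,1)-, (1,2)- and (2,2)-morphisms, apply the above to each row or column. The compatibility equations with the vertical (resp. horizontal) structure maps are equalities of bisimplicial maps, so they are preserved by functoriality of $\Wbar$. By Theorem \ref{thm:11MorphToSquareAna-Wbar}-style reasoning (in the form of the preceding steps, applied levelwise), the double anafunctors at the two ends go to double anafunctors.

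The main obstacle is the homotopy step. One must check that $\Wbar$ of a horizontal transformation really lands in a 1-simplex of the mapping space, i.e. that $\Wbar$ is compatible with the copowering. $\Wbar(X\times\Delta^{1,0})$ is not literally $\Wbar X\otimes\Delta^1$, and one must ensure that the comparison map exists and has the correct endpoints. Verifying this identification of $\Wbar\Delta^{1,0}$ and the product compatibility in low degrees (levels $0,1,2$ suffice for Lie 2-groupoids, by the finite-data description) is the step I expect to require the most care.
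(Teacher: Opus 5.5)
Your route is the paper's: regard $\alpha$ as a bisimplicial map $(A\times_GA')\times\Delta^{1,0}\to J$, apply $\Wbar$, use that $\Wbar$ preserves limits, and read off a $1$-simplex of $\Maps(\Wbar(A\times_GA'),\Wbar J)$. However, your computation of $\Wbar\Delta^{1,0}$ is wrong, and it is the sole source of the ``main obstacle'' you anticipate.

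The tuples in $(\Wbar\Delta^{1,0})_n$ are not free. The vertical structure maps of $\Delta^{1,0}$ are identities, so the condition $d^h_0x_a=d^v_{a+1}x_{a+1}$ reads $x_{a+1}=d^h_0x_a$. Hence the whole tuple is determined by $x_0\in\Delta^1_n$, and $d_i,s_i$ act on $x_0$ by $d^h_i,s^h_i$. So $\Wbar\Delta^{1,0}\cong\Delta^1$ exactly: in degree $1$ there are three simplices, not the four of the nerve of the pair groupoid on $\{0,1\}$. Since $\Wbar$ preserves products, $\Wbar((A\times_GA')\times\Delta^{1,0})\cong\Wbar(A\times_GA')\otimes\Delta^1$ on the nose. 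This is precisely the identification the paper uses, so no comparison map and no low-degree verification are needed. Your argument survived only because you needed merely some map $\Delta^1\to\Wbar\Delta^{1,0}$ with the right endpoints. (Dually, $\Wbar\Delta^{0,1}\cong\Delta^1$, now via the last entry $x_n$.)

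Two smaller points.

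First, your steps 2--3 are a detour resting on an unchecked hypothesis. Lemma~\ref{lem:Wbar-horizontal-hypercover} is stated for full double Lie groupoids, so you would need $A\times_GA'$ to be full, which you did not verify. It does hold, using fullness of $A$ and of the leg $A'\to G$, but it takes an argument. The paper avoids this by applying the lemma to the given full legs $A\to G$ and $A'\to G$, then using $\Wbar(A\times_GA')\cong\Wbar A\times_{\Wbar G}\Wbar A'$ together with (H3).

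Second, for $(2,2)$-morphisms, ``preserved by functoriality of $\Wbar$'' is not by itself enough. The compatibility conditions involve composites of natural transformations, whereas composites of $1$-simplices in $\Maps$ are only defined up to $2$-simplices. $\Wbar$ does supply these: $\Wbar$ of the bisimplicial set with rows $\Delta^2$ and constant columns is $\Delta^2$, and $\Wbar(\Delta^{1,0}\times\Delta^{0,1})\cong\Delta^1\times\Delta^1$. This is exactly why the paper invokes $\Pi_1\Maps$ at that point. Since you already pass to $\Pi_1$, you only need to make this explicit.
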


\begin{proof}
    We show this for a 2-morphism of horizontal anafunctors $\alpha$ between $G\overset{\sim_h}{\twoheadleftarrow}A \to H$ and $G\overset{\sim_h}{\twoheadleftarrow}A' \to H$, as the rest will follow by duality and functoriality. Recall that $\alpha$ is equivalent to a bisimplicial map $A \times_G A' \times \Delta^{1,0} \to H$, $\Wbar$ preserves limits, and $\Wbar \Delta^{1,0} \cong \Delta^1$, so we have that $\Wbar\alpha: \Wbar A \times_{\Wbar G} \Wbar A'\times \Delta^1 \to \Wbar H$ is a 2-morphism of anafunctors from $\Wbar G \overset{\sim}{\twoheadleftarrow} \Wbar A\to  \Wbar H$ to $\Wbar G \overset{\sim}{\twoheadleftarrow} \Wbar A'\to  \Wbar H$. To obtain the desired compatibility with $(2,2)$-morphisms, we use the fact that we are considering the fundamental mapping groupoid of the simplicial mapping complex as the hom-space of $\mathsf{Lie_2Gpd}$.
\end{proof}

Because the $\Wbar$ functor commutes with pullbacks, and as such it maps compositions in the double bicategory $\Ana(\mathsf{LieGpd^2_{full}})$ to the corresponding compositions in $\mathsf{2}\Ana(\mathsf{Lie_2}\Gpd)$, we can assemble Theorem \ref{thm:HorGenMorphToAna-Wbar}, \ref{thm:VerGenMorphToAna-Wbar}, and \ref{thm:11MorphToSquareAna-Wbar}, to obtain the following.

\begin{corollary}\label{cor:double-to-simplicial-double-bicats}
    The functor $\Wbar: \mathsf{LieGpd^2_{full}} \to \mathsf{Lie_2}\Gpd$ extends to a morphism of double bicategories
    \begin{equation*}
        \Wbar: \Ana(\mathsf{LieGpd^2_{full}}) \longrightarrow \mathsf{2}\Ana(\mathsf{Lie_2}\Gpd). 
    \end{equation*}
	Therefore, there is a morphism of double bicategories
	\begin{equation*}
        \mathsf{Bun}(\mathsf{LieGpd^2_{full}}) \longrightarrow \mathsf{2}\Ana(\mathsf{Lie_2}\Gpd)
    \end{equation*}
	obtained by composing $\Wbar$ with the biaction construction. 
\end{corollary}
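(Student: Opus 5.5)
The plan is to reduce the statement to the lemmas already established for horizontal and vertical hypercovers. First, I would invoke the theorem that the biaction groupoid construction defines a morphism $\mathsf{Bun}(\mathsf{LieGpd^2_{full}}) \to \Ana(\mathsf{LieGpd^2_{full}})$, together with Lemma~\ref{lem:doubleheadsRhypercovers}. These tell us that \eqref{diag:double-span-total-action-of-square} is a double anafunctor of double Lie groupoids whose double-headed arrows are horizontal or vertical levelwise hypercovers.

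Next I would check that every double groupoid in this span is full, so that $\Wbar$ lands in Lie 2-groupoids by \cite[Theorem 4.5]{MehtaTang2011}. For $\hortilde{A}$, $\hortilde{D}$, $\vertilde{B}$ and $\vertilde{C}$ this is exactly Lemmas~\ref{lem:HorGenMorphToHorAna} and~\ref{lem:VerGenMorphToVerAna}, since $G,H,J,K$ are full. For the center, $\widetilde{\widetilde{\Omega}} = \hortilde{\vertilde{\Omega}}$ is the horizontal biaction double groupoid of the horizontal generalized morphism $\vertilde{\Omega}:\vertilde{C}\hormor\vertilde{B}$ from Lemma~\ref{lem:vertilde-omega-is-hor-gen-morph}. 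Since $\vertilde{C}$ and $\vertilde{B}$ are full, Lemma~\ref{lem:HorGenMorphToHorAna} gives that $\widetilde{\widetilde{\Omega}}$ is full.

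The core step is to show that each double-headed map becomes a hypercover after applying $\Wbar$. By Lemmas~\ref{lem:Wbar-horizontal-hypercover} and~\ref{lem:Wbar-vertical-hypercover}, it suffices that each levelwise hypercover is full. The maps $\hortilde{A}\to G$ and $\hortilde{D}\to H$ are the $\widetilde{r}$ of horizontal bibundles whose left-acting double groupoids $J$ and $K$ are full, so Lemma~\ref{lem:fullness-implies-extra-cond-horizontal} applies. Dually, Lemma~\ref{lem:fullness-implies-extra-cond-vertical} handles $\vertilde{C}\to G$ and $\vertilde{B}\to J$, using that $H$ and $K$ are full. The two central maps are treated the same way:
\begin{itemize}
\item $\widetilde{\widetilde{\Omega}}\to\vertilde{C}$ is the $\widetilde{r}$ of the horizontal generalized morphism $\vertilde{\Omega}$, whose left-acting double groupoid $\vertilde{B}$ is full, so Lemma~\ref{lem:fullness-implies-extra-cond-horizontal} applies.
\item $\widetilde{\widetilde{\Omega}}\to\hortilde{A}$ is the $\widetilde{r}$ of the vertical generalized morphism $\hortilde{\Omega}$ of Lemma~\ref{lem:hortilde-omega-is-vert-gen-morph}, whose left-acting double groupoid $\hortilde{D}$ is full, so Lemma~\ref{lem:fullness-implies-extra-cond-vertical} applies.
\end{itemize}
The one point to verify carefully is the identification $\vertilde{\hortilde{\Omega}}\cong\hortilde{\vertilde{\Omega}}$. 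Under it, the two projections from $\widetilde{\widetilde{\Omega}}$ must agree with the respective $\widetilde{r}$ maps. This is a matter of matching the components of the iterated fiber products, and I expect it to be the main (though routine) obstacle.

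Finally, $\Wbar$ is a functor, so it sends the commuting double span to a commuting diagram \eqref{diag:Wbar-double-anafunctor}. Each row and column is then a span of Lie 2-groupoids whose domain-side leg is a hypercover, that is, an anafunctor. This is precisely a $(1,1)$-morphism in $\mathsf{2}\Ana(\mathsf{Lie_2}\Gpd)$ in the sense of Definition~\ref{def:1-1-2-1-1-2-homs-anafunctors}.
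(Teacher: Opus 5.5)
Your argument is correct as far as it goes, but what it proves is Theorem~\ref{thm:11MorphToSquareAna-Wbar}: that $\Wbar$ sends the double span \eqref{diag:double-span-total-action-of-square} attached to a $(1,1)$-morphism to a $(1,1)$-cell of $\mathsf{2}\Ana(\mathsf{Lie_2}\Gpd)$. Your bookkeeping of which fullness lemma applies to which leg is right, including the two central legs via $\vertilde{\Omega}$ and $\hortilde{\Omega}$. The corollary, however, asserts that $\Wbar$ is a \emph{morphism of double bicategories}. An assignment on $(1,1)$-cells is only one layer of that data, and the proposal never addresses the rest.

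\textbf{Composition and units.} This is the main missing ingredient. Every composition in both double bicategories is defined by pullback. For example, $G\overset{\sim_h}{\twoheadleftarrow}A\to J$ and $J\overset{\sim_h}{\twoheadleftarrow}A'\to L$ compose to $A\times_J A'$, and double anafunctors compose by the analogous pullbacks of their rows or columns. To obtain a pseudofunctor you need canonical, coherent isomorphisms $\Wbar(A\times_J A')\cong\Wbar A\times_{\Wbar J}\Wbar A'$, and similarly for units. The paper gets these from the fact that $\Wbar$ is right adjoint to the total d\'ecalage $\mathrm{Dec}$, hence preserves pullbacks.

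\textbf{Higher cells.} The same limit preservation handles the higher cells, which you also omit. A $(2,0)$-cell is a bisimplicial map $A\times_G A'\times\Delta^{1,0}\to J$. Applying $\Wbar$ and using $\Wbar\Delta^{1,0}\cong\Delta^1$ gives a homotopy $\Wbar A\times_{\Wbar G}\Wbar A'\times\Delta^1\to\Wbar J$, i.e.\ a 2-morphism of anafunctors as in Definition~\ref{def:2mor ana}. The $(0,2)$-, $(2,1)$- and $(1,2)$-cells follow dually and componentwise. The $(2,2)$-compatibility uses that the hom-spaces of $\mathsf{Lie_2Gpd}$ are $\Pi_1\Maps$.

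\textbf{General cells.} The cells of $\Ana(\mathsf{LieGpd^2_{full}})$ are arbitrary full double anafunctors, not only those produced by the biaction construction. For these, fullness of the legs is part of the definition, so Lemmas~\ref{lem:Wbar-horizontal-hypercover} and~\ref{lem:Wbar-vertical-hypercover} apply directly. Lemmas~\ref{lem:fullness-implies-extra-cond-horizontal} and~\ref{lem:fullness-implies-extra-cond-vertical} are only needed to see that the biaction construction lands in $\Ana(\mathsf{LieGpd^2_{full}})$. Once both morphisms exist, the second statement is simply their composite.
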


We now relate $(1,1)$-morphisms of full double Lie groupoids to 2-morphisms in the bicategory of fractions of Lie 2-groupoids localized at weak equivalences, as in Definition \ref{def:2mor ana}.

\begin{theorem}\label{thm:1-1-morph-to-2-hom-diamond}
    Let $(\Omega,A^{JG},B_J^K,C_G^H,D^{KH})$ be a $(1,1)$-morphism of full double Lie groupoids. Let \eqref{diag:double-span-total-action-of-square} be the associated double anafunctor.
    Then the diagram 
\begin{equation}\label{diag:1-1-morph-to-2-hom-diamond}\begin{tikzcd}[ampersand replacement=\&,cramped]
	\& {\widebar{W} \widetilde{C}^v \times_{\widebar{W} H} \widebar{W} \widetilde{D}^h} \& \\
	{\widebar{W}G} \& {\widebar{W} \widetilde{\widetilde{\Omega}}} \& {\widebar{W} K} \\
	\& {\widebar{W} \widetilde{A}^h \times_{\widebar{W} J} \widebar{W} \widetilde{B}^v}
	\arrow["\sim"', two heads, from=1-2, to=2-1]
	\arrow[from=1-2, to=2-3]
	\arrow["\sim"', from=2-2, to=1-2]
	\arrow["\sim"', two heads, from=2-2, to=2-1]
	\arrow["\sim", from=2-2, to=3-2]
	\arrow["\sim", two heads, from=3-2, to=2-1]
	\arrow[from=3-2, to=2-3]
\end{tikzcd}\end{equation}
    commutes, the vertical maps are weak equivalences, and the horizontal map is a hypercover. In other words, it is a 2-morphism of anafunctors in $\mathsf{Lie_2 Gpd}[\huaW^{-1}]$, therefore it defines an equivalence of anafunctors.
\end{theorem}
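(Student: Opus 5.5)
The approach is to obtain every ingredient of \eqref{diag:1-1-morph-to-2-hom-diamond} by applying $\Wbar$ to the double anafunctor \eqref{diag:double-span-total-action-of-square}, and to use that $\Wbar$ preserves pullbacks.

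First, the upper and lower vertices. Write them as $\Wbar(\vertilde{C}\times_H \hortilde{D})$ and $\Wbar(\hortilde{A}\times_J \vertilde{B})$, which is legitimate because $\Wbar$ is a right adjoint. The pullbacks exist in double Lie groupoids: $\hortilde{D}\to H$ is a horizontal hypercover and $\vertilde{B}\to J$ is a vertical one (Remarks \ref{rem:pullback-hor-hypercover} and \ref{rem:hor-vert-hypercov-properties}). For the $\Wbar$ side we argue directly:
\begin{itemize}
\item $\Wbar\hortilde{D}\to\Wbar H$ is a hypercover of Lie 2-groupoids by Lemmas \ref{lem:HorGenMorphToHorAna}, \ref{lem:fullness-implies-extra-cond-horizontal} and \ref{lem:Wbar-horizontal-hypercover}; this uses that $K$ is full.
\item Its pullback along $\Wbar\vertilde{C}\to\Wbar H$ therefore exists and is a Lie 2-groupoid by (H3), and the projection to $\Wbar\vertilde{C}$ is a hypercover.
\item Composing with the hypercover $\Wbar\vertilde{C}\to\Wbar G$ (Lemmas \ref{lem:VerGenMorphToVerAna}, \ref{lem:fullness-implies-extra-cond-vertical}, \ref{lem:Wbar-vertical-hypercover}) and using (H2), the left leg of the upper anafunctor is a hypercover.
\end{itemize}
The lower leg is handled symmetrically. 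So the top and bottom spans are exactly the composites $\Wbar C\circ$-then-$\Wbar D$ and $\Wbar A$-then-$\Wbar B$ of the anafunctors from Theorems \ref{thm:HorGenMorphToAna-Wbar} and \ref{thm:VerGenMorphToAna-Wbar}. The horizontal map $\Wbar\widetilde{\widetilde{\Omega}}\to\Wbar G$ is the composite $\Wbar\widetilde{\widetilde{\Omega}}\to\Wbar\vertilde{C}\to\Wbar G$. Both factors are hypercovers by Lemma \ref{lem:doubleheadsRhypercovers}, Lemma \ref{lem:Wbar-horizontal-hypercover} and (H2). The fullness of $\widetilde{\widetilde{\Omega}}\to\vertilde{C}$ comes from applying Lemma \ref{lem:fullness-implies-extra-cond-horizontal} to the horizontal generalized morphism of Lemma \ref{lem:vertilde-omega-is-hor-gen-morph}; this needs $\vertilde{B}$ to be full, which holds by Lemma \ref{lem:VerGenMorphToVerAna}.

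Second, the vertical maps. The map $\Wbar\widetilde{\widetilde{\Omega}}\to\Wbar\vertilde{C}\times_{\Wbar H}\Wbar\hortilde{D}$ is induced by the universal property from the two legs of the upper right square of the double anafunctor, which commutes. Commutativity of the whole diagram then follows from commutativity of the four squares of \eqref{diag:double-span-total-action-of-square}, with the two 2-cells $\alpha,\beta$ taken to be identities. To show this map is a weak equivalence, use 2-out-of-3: its composite with the hypercover to $\Wbar G$ equals the hypercover $\Wbar\widetilde{\widetilde{\Omega}}\to\Wbar G$ just established. Hypercovers are weak equivalences, and weak equivalences satisfy 2-out-of-3 in the category of fibrant objects of \cite{RogersZhu2020}. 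The same argument applies to the lower map, using the factorization through $\Wbar\hortilde{A}\to\Wbar G$. This also matches the requirement in Definition \ref{def:2mor ana} that $f\circ q$ and $f'\circ q'$ be hypercovers.

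The main obstacle is the one place where vertical and horizontal hypercovers are mixed. A priori the composite of a horizontal with a vertical hypercover need not be a hypercover of double groupoids (Remark \ref{rem:hor-vert-hypercov-properties}). We avoid this by never composing them at the double level: each is pushed to a genuine hypercover by $\Wbar$ first, and only then composed. Doing so requires checking that every intermediate double groupoid is full and every relevant levelwise hypercover is full, so that Lemmas \ref{lem:Wbar-horizontal-hypercover} and \ref{lem:Wbar-vertical-hypercover} apply and $\Wbar$ of each object is a Lie 2-groupoid. This bookkeeping, tracking fullness through $\hortilde{\cdot}$, $\vertilde{\cdot}$ and $\widetilde{\widetilde{\cdot}}$ via Lemmas \ref{lem:HorGenMorphToHorAna}–\ref{lem:vertilde-omega-is-hor-gen-morph}, is where I expect care to be needed. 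The final claim, that the 2-morphism is an equivalence of anafunctors, follows because the diagram is symmetric and $\mathsf{Lie_2Gpd}[\huaW^{-1}]$ has invertible 2-cells.
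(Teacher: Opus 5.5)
Your proposal is correct and follows the same route as the paper. The pullback vertices exist, with hypercover legs to $\Wbar G$, by (H2)--(H3); the map $\Wbar\widetilde{\widetilde{\Omega}}\to\Wbar G$ is a hypercover because it is a composite of hypercovers; the comparison maps come from the universal property using commutativity of the squares; and 2-out-of-3 makes them weak equivalences. The only difference is that you re-derive the fullness bookkeeping explicitly from the earlier lemmas, which the paper instead absorbs into Theorem~\ref{thm:11MorphToSquareAna-Wbar}.
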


\begin{proof}
	Starting with \eqref{diag:double-span-total-action-of-square} we apply Theorem \ref{thm:11MorphToSquareAna-Wbar} and obtain \eqref{diag:Wbar-double-anafunctor}. Because both pullbacks in \eqref{diag:1-1-morph-to-2-hom-diamond} are over hypercovers, they exist. Additionally, the maps from each of the pullbacks to $\Wbar G$ are hypercovers because hypercovers are closed under composition and pullbacks. The two maps from $\Wbar \widetilde{\widetilde{\Omega}}$ to each of the pullbacks are obtained by the universal property of the pullback, and the diagram \eqref{diag:1-1-morph-to-2-hom-diamond} commutes because of commutativity of the following diagram:
\[\begin{tikzcd}[ampersand replacement=\&,cramped]
	{\Wbar K} \&\& {\Wbar (\hortilde{D})} \&\& {\Wbar H} \\
	\&\&\& {\Wbar\hortilde{D} \times_{\Wbar H} \Wbar\vertilde{C}} \\
	{\Wbar (\vertilde{B})} \&\& {\Wbar (\widetilde{\widetilde{\Omega}})} \&\& {\Wbar (\vertilde{C})} \\
	\& {\Wbar\vertilde{B} \times_{\Wbar J} \Wbar\hortilde{A}} \\
	{\Wbar J} \&\& {\Wbar (\hortilde{A})} \&\& {\Wbar G.}
	\arrow[from=1-3, to=1-1]
	\arrow["\sim", two heads, from=1-3, to=1-5]
	\arrow[from=2-4, to=1-3]
	\arrow["\sim"{pos=0.7}, two heads, from=2-4, to=3-5]
	\arrow[from=3-1, to=1-1]
	\arrow["\sim", two heads, from=3-1, to=5-1]
	\arrow[from=3-3, to=1-3]
	\arrow["{\exists !}", from=3-3, to=2-4]
	\arrow[from=3-3, to=3-1]
	\arrow["\sim", two heads, from=3-3, to=3-5]
	\arrow["{\exists!}"', from=3-3, to=4-2]
	\arrow["\sim", two heads, from=3-3, to=5-3]
	\arrow[from=3-5, to=1-5]
	\arrow["\sim", two heads, from=3-5, to=5-5]
	\arrow[from=4-2, to=3-1]
	\arrow["\sim"{pos=0.7}, two heads, from=4-2, to=5-3]
	\arrow[from=5-3, to=5-1]
	\arrow["\sim", two heads, from=5-3, to=5-5]
\end{tikzcd}\]
	Furthermore, both maps $\Wbar \widetilde{\widetilde{\Omega}} \to \Wbar G$ coincide and this map is a hypercover, since it factorizes as a composition of hypercovers by diagram chasing. Finally, by the two-out-of-three property of weak equivalences, the maps $\Wbar \widetilde{\widetilde{\Omega}} \to \widebar{W} \widetilde{C}^v \times_{\widebar{W} H} \widebar{W} \widetilde{D}^h$ and $\Wbar \widetilde{\widetilde{\Omega}} \to \widebar{W} \widetilde{A}^h \times_{\widebar{W} J} \widebar{W} \widetilde{B}^v$ are weak equivalences. 
\end{proof}

\begin{remark}
	Note that $\Wbar$ does not map composition of squares to composition of diamonds such as \eqref{diag:1-1-morph-to-2-hom-diamond}. However, this simplified construction is useful in applications. 
\end{remark}

Theorems \ref{thm:11MorphToSquareAna-Wbar} and \ref{thm:1-1-morph-to-2-hom-diamond} give complementary simplicial realizations of a
$(1,1)$-morphism. The first retains its square geometry and is compatible
with the horizontal and vertical structures of the relevant double
bicategories. The second forgets this two-directional organization and
produces a representative of a globular $2$-morphism in the localization at
weak equivalences.  Although the latter construction does not preserve the composition of squares, it is the form needed to compare ordinary generalized morphisms in the applications below.

\section{Non-abelian gerbes as generalized morphisms}\label{sec:nonab gerbes} Non-abelian gerbes are categorified versions of principal bundles in which Lie 2-groups play the role of the structure groups. The various incarnations of this idea have led to multiple definitions in the literature \cite{AschieriCantiniJurco2005,BaezSchreiber2007,MartinsPicken2011,NikolausWaldorf2013}. Here we show how two of those definitions may be formulated and compared directly using the language of double groupoids and $(1,1)$-morphisms. Using our main result, we immediately show that they are different incarnations of generalized morphisms between manifolds and Lie 2-groups in their simplicial description.
\subsection{Abelian gerbes and their generalizations} If $T$ is an abelian Lie group, we may view it as a double groupoid with both side groupoids being trivial. An abelian $T$-gerbe on a manifold $M$ may be viewed as a principal $T$-bundle over a submersion groupoid $P_0 \times_M P_0 \rightrightarrows P_0$ associated to a surjective submersion $p:P_0\to M$ \cite{BehrendXu2011}. A morphism between such abelian $T$-gerbes may be described as a $(1,1)$-morphism 
\[\begin{tikzcd}
	T & \ast & {P_1} & {P_{0}\times_M P_0} & {P_{0}\times_M P_0} \\
	\ast & \ast & {P_0} & {P_0} & {P_0} \\
	{Q_1} & {Q_0} & \Omega & {P_0} & {P_0} \\
	{Q_{0}\times_M Q_0} & {Q_{0}} & {Q_0} & M & M \\
	{Q_{0}\times_M Q_0} & {Q_0} & {Q_0} & M & {M,}
	\arrow[shift left, from=1-1, to=1-2]
	\arrow[shift right, from=1-1, to=1-2]
	\arrow[shift left, from=1-1, to=2-1]
	\arrow[shift right, from=1-1, to=2-1]
	\arrow[shift left, from=1-2, to=2-2]
	\arrow[shift right, from=1-2, to=2-2]
	\arrow[from=1-3, to=1-2]
	\arrow[from=1-3, to=1-4]
	\arrow[shift left, from=1-3, to=2-3]
	\arrow[shift right, from=1-3, to=2-3]
	\arrow[shift left, from=1-4, to=2-4]
	\arrow[shift right, from=1-4, to=2-4]
	\arrow[shift left, no head, from=1-5, to=1-4]
	\arrow[shift right, no head, from=1-5, to=1-4]
	\arrow[shift left, from=1-5, to=2-5]
	\arrow[shift right, from=1-5, to=2-5]
	\arrow[shift left, from=2-1, to=2-2]
	\arrow[shift right, from=2-1, to=2-2]
	\arrow[from=2-3, to=2-2]
	\arrow[from=2-3, to=2-4]
	\arrow[shift left, no head, from=2-5, to=2-4]
	\arrow[shift right, no head, from=2-5, to=2-4]
	\arrow[from=3-1, to=2-1]
	\arrow[shift right, from=3-1, to=3-2]
	\arrow[shift left, from=3-1, to=3-2]
	\arrow[from=3-1, to=4-1]
	\arrow[from=3-2, to=2-2]
	\arrow[from=3-2, to=4-2]
	\arrow[from=3-3, to=2-3]
	\arrow[from=3-3, to=3-2]
	\arrow[from=3-3, to=3-4]
	\arrow[from=3-3, to=4-3]
	\arrow[from=3-4, to=2-4]
	\arrow[from=3-4, to=4-4]
	\arrow[from=3-5, to=2-5]
	\arrow[shift right, no head, from=3-5, to=3-4]
	\arrow[shift left, no head, from=3-5, to=3-4]
	\arrow[from=3-5, to=4-5]
	\arrow[shift left, from=4-1, to=4-2]
	\arrow[shift right, from=4-1, to=4-2]
	\arrow[from=4-3, to=4-2]
	\arrow[from=4-3, to=4-4]
	\arrow[shift left, no head, from=4-5, to=4-4]
	\arrow[shift right, no head, from=4-5, to=4-4]
	\arrow[shift right, no head, from=5-1, to=4-1]
	\arrow[shift left, no head, from=5-1, to=4-1]
	\arrow[shift left, from=5-1, to=5-2]
	\arrow[shift right, from=5-1, to=5-2]
	\arrow[shift right, no head, from=5-2, to=4-2]
	\arrow[shift left, no head, from=5-2, to=4-2]
	\arrow[shift left, no head, from=5-3, to=4-3]
	\arrow[shift right, no head, from=5-3, to=4-3]
	\arrow[from=5-3, to=5-2]
	\arrow[from=5-3, to=5-4]
	\arrow[shift right, no head, from=5-4, to=4-4]
	\arrow[shift left, no head, from=5-4, to=4-4]
	\arrow[shift right, no head, from=5-5, to=4-5]
	\arrow[shift left, no head, from=5-5, to=4-5]
	\arrow[shift left, no head, from=5-5, to=5-4]
	\arrow[shift right, no head, from=5-5, to=5-4]
\end{tikzcd}\]
where the identity arrows indicate unit groupoid structures \cite{BehrendXu2011}.

Therefore, by applying Theorem \ref{thm:1-1-morph-to-2-hom-diamond}, we obtain that the generalized morphisms determined by these $T$-gerbes are equivalent. This observation leads to rephrasing and proving in a simple way the equivalence between various notions of abelian gerbes, including their \v{C}ech cocycle description. In this work, we are interested in replacing $T$ with a (strict) Lie 2-group $G_1\rightrightarrows G_0$ and thus, by making use of the conceptual viewpoint for non-abelian gerbes, we consider a generalized morphism from a manifold into $G$.
The detailed interpretation of non-abelian gerbes --- which were defined as Lie groupoid extensions in \cite{LaurentGengouxStienonXu2009} --- in terms of a generalized morphism from a manifold into a Lie 2-group is developed in \cite{GinotStienon2015}. Additionally, non-abelian gerbes admit two additional models studied in \cite{NikolausWaldorf2013} given by principal 2-bundles and groupoid bundle gerbes. Whereas these two models are related by means of a categorical equivalence, a direct comparison between them when seen as anafunctors or generalized morphisms was missing and so we provide it here.
Compared to \cite{NikolausWaldorf2013}, we make stricter definitions to make the construction more transparent. 

\subsection{Groupoid bundle gerbes and principal 2-bundles}
There is an equivalence between the category of strict Lie 2-groups and the category of \textit{crossed modules} \cite{BrownSpencer1976}. The point of view of crossed modules is  common in the literature about gerbes and so we recall it here. 

\begin{definition}
    A \textbf{crossed module} (of Lie groups) consists of a pair of Lie groups $K$ and $G_0$, a Lie group morphism $\partial: K \to G_0$, and a smooth action 
    \begin{equation*}
        G_0 \times K \to K, \qquad (g,k)\mapsto {}^gk
    \end{equation*}
    by Lie group automorphisms, such that 
    \begin{equation*}
    \partial({}^gk)=g\partial(k)g^{-1}, \qquad {}^{\partial(k)}k'=kk'k^{-1}, \qquad \forall k,k'\in K,\  g\in G_0; 
    \end{equation*} 
\end{definition}

Let $G_1\rightrightarrows G_0$ be a strict Lie 2-group and let $K=\ker s$, so that $G_1 \cong K \times G_0$ and $\partial=t|_K:K\to G_0$ is the Lie group morphism underlying the crossed module associated to $G$. 
In terms of this description, the groupoid structure $G_1\rightrightarrows G_0$ is given by the action groupoid with respect to the action of $K$ on $G_0$ by left translations and so its multiplication is
\[ (k,\partial(k')g)(k',g)=(kk',g), \qquad \forall k,k'\in K,\ g\in G_0. \]
The group structure on $G_1 \cong K\times G_0$ is that of a semidirect product, where the multiplication is
\[ (k,g)(k',g')=(k({}^gk'),gg'), \qquad \forall k,k'\in K,\ g,g'\in G_0.\]
Since we may view a Lie 2-group as a double Lie groupoid with a trivial side as in the diagram
\[\begin{tikzcd}
	{G_1} & \ast \\
	{G_0} & \ast,
	\arrow[shift right, from=1-1, to=1-2]
	\arrow[shift left, from=1-1, to=1-2]
	\arrow[shift right, from=1-1, to=2-1]
	\arrow[shift left, from=1-1, to=2-1]
	\arrow[shift right, no head, from=1-2, to=2-2]
	\arrow[shift left, no head, from=1-2, to=2-2]
	\arrow[shift right, from=2-1, to=2-2]
	\arrow[shift left, from=2-1, to=2-2]
\end{tikzcd}\]
the corresponding $(1,0)$-morphisms and $(0,1)$-morphisms into it are quite different from each other. We say that a \textbf{principal 2-bundle} is a $(1,0)$-morphism from a submersion groupoid into this double Lie groupoid, on the other hand, we say that a \textbf{groupoid bundle gerbe} is a $(0,1)$-morphism from a submersion groupoid into the same double Lie groupoid. In what follows, we shall see how to pass from such $(1,0)$-morphisms to $(0,1)$-morphisms in a way that is mediated by a $(1,1)$-morphism.
\begin{remark}
    For the sake of clarity, we illustrate the argument below under the simplifying assumption that our principal 2-bundles are defined over unit groupoids rather than over submersion groupoids. The general case of submersion groupoids (and even that of a base being a general Lie groupoid) may be treated along similar lines. Also, our notion of groupoid bundle gerbe is a strict version of the one in \cite{NikolausWaldorf2013}.
\end{remark}
Explicitly, a principal 
$(G_1\rightrightarrows G_0)$-bundle over a manifold $M$ is a Lie groupoid $P_1\rightrightarrows P_0$ equipped with a left principal action of $G_1\rightrightarrows G_0$ viewed as a double groupoid with trivial side and such that the quotient is the unit groupoid over $M$. So we have a principal group action $G_i\times P_i\to P_i$ at each level $i=0,1$, and together these actions define a groupoid morphism. We say that this type of action is a \textbf{Lie 2-group action}. We denote such an action by $g\cdot x={}^gx$ for all $g\in G_1$ and $x\in P_1$. 

From a Lie 2-group action, we may produce a new principal bundle but now for an action of  $G_1\rightrightarrows G_0$ viewed as a groupoid, which determines a $(0,1)$-morphism from a submersion groupoid to $G_1\rightrightarrows G_0$. This new principal bundle is our desired groupoid bundle gerbe, and its definition follows \cite[\S 7]{NikolausWaldorf2013}, though we are using slightly different conventions. Consider the semidirect product groupoid given by $G_0$ acting on $P_1\rightrightarrows P_0$ via the unit inclusion $G_0\subset G_1$. Indeed, since this action is obtained by restriction of a Lie 2-group action to the unit group, it is an action by groupoid automorphisms, so that we may equip the product $(Q_1\rightrightarrows Q_0)=(G_0\ltimes P_1\rightrightarrows P_0)$ with the semidirect product structure maps 

\[ t(a,x)=t(x), \qquad s(a,x)={}^{a^{-1}}s(x), \qquad (a,x)(b,y)=(ab,x({}^ay)). \]

Additionally, we have a morphism from this groupoid to the submersion groupoid $P_0\times_M P_0 \rightrightarrows P_0 $ given by the identity on $P_0$ at level 0 and
\begin{equation*}
    Q_1 \to P_0\times_M P_0,   \qquad  (a,x)\mapsto (t(x),{}^{a^{-1}}s(x)), 
\end{equation*}
at level 1.
This groupoid morphism is the quotient map for the action of $G_1\rightrightarrows G_0$ --- as a Lie groupoid --- on the projection $\text{pr}:Q_1\to G_0$, defined by
\begin{align}
     (k,a)\cdot (a,x)= (\partial(k)a,{}^{(k^{-1},\partial(k))}x). \label{eq:act 2group}
\end{align}

\begin{lemma}\label{lem:bunger act} The action \eqref{eq:act 2group} defines a morphism of Lie groupoids
\[ G_1\times_{G_0} Q_1 \to Q_1 \]
where we view $G_1\times_{G_0} Q_1\rightrightarrows Q_0$ as a fiber product of the Lie groupoids $G_1\rightrightarrows \ast$ and $Q_1\rightrightarrows Q_0$ over $s:G_1\to G_0$ and $\text{pr}:Q_1\to G_0$.  
\end{lemma}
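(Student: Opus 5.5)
The plan is to check directly that \eqref{eq:act 2group} is a smooth map of arrows which is compatible with the groupoid structures on both sides. Recall that an arrow of $G_1\times_{G_0}Q_1\rightrightarrows Q_0$ is a pair $\bigl((k,a),(a,x)\bigr)$ with $s(k,a)=a=\text{pr}(a,x)$. Its source and target are those of $(a,x)$ in $Q_1$, and composition is componentwise: the group product in $G_1\cong K\rtimes G_0$, and the semidirect product composition in $Q_1$. This is well defined because $s:G_1\to G_0$ and $\text{pr}:Q_1\to G_0$ are both group/groupoid morphisms compatible with these products. The map on objects is the identity of $Q_0=P_0$. Smoothness is clear, since \eqref{eq:act 2group} is built from the structure maps of the crossed module and the smooth Lie 2-group action on $P_1$.

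\textbf{Step 1: sources and targets.} Put $g:=(k^{-1},\partial(k))\in G_1$. It has $s(g)=\partial(k)$ and $t(g)=\partial(k^{-1})\partial(k)=1$. Because the Lie 2-group action is a groupoid morphism, we have $t({}^gx)={}^{t(g)}t(x)=t(x)$, so targets are preserved. For sources,
\[
s\bigl(\partial(k)a,{}^gx\bigr)={}^{(\partial(k)a)^{-1}}s({}^gx)={}^{a^{-1}\partial(k)^{-1}\partial(k)}s(x)={}^{a^{-1}}s(x)=s(a,x).
\]
Thus the action fixes the map to the objects $P_0$. It also commutes with the morphism $Q_1\to P_0\times_M P_0$, consistent with this map being the quotient. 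Finally, for unit arrows $k=1$ we get $g=1_{G_1}$, so units are sent to units.

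\textbf{Step 2: multiplicativity.} For composable arrows $\bigl((k,a),(a,x)\bigr)$ and $\bigl((k',b),(b,y)\bigr)$, the product is
\[
\bigl((k\,{}^ak',ab),\,(ab,x\,{}^ay)\bigr).
\]
We must show that its image equals the $Q_1$-product
\[
\bigl(\partial(k)a,{}^{g}x\bigr)\bigl(\partial(k')b,{}^{g'}y\bigr)=\bigl(\partial(k)a\partial(k')b,\;{}^gx\cdot{}^{\partial(k)a}({}^{g'}y)\bigr).
\]
The $G_0$-components agree because $\partial({}^ak')=a\partial(k')a^{-1}$. For the $P_1$-components I will use two facts:
\begin{itemize}
\item the action is by a group, so ${}^{h}({}^{h'}y)={}^{hh'}y$;
\item the action is a groupoid morphism, which gives the interchange law ${}^{h}x\cdot{}^{h'}y={}^{h\circ h'}(x\cdot y)$ whenever $s(h)=t(h')$ in $G_1\rightrightarrows G_0$ and $s(x)=t(y)$.
\end{itemize}
Concretely, I would write ${}^gx={}^{g}(x)$ and ${}^{\partial(k)a}({}^{g'}y)={}^{(1,\partial(k)a)g'}y$, where ${}^ay$ denotes ${}^{(1,a)}y$. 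The aim is to factor both group elements so that the interchange law applies to the pair $\bigl(x,{}^ay\bigr)$. This means finding $h,h'\in G_1$ with $h\circ h'=\bigl(({}^ak')^{-1}k^{-1},\partial(k\,{}^ak')\bigr)$ such that ${}^gx={}^{h}x$ and ${}^{(1,\partial(k)a)g'}y={}^{h'}({}^ay)$. Comparing the two sides then reduces to an identity in $K\rtimes G_0$, which follows from the crossed-module axioms $\partial({}^gk)=g\partial(k)g^{-1}$ and ${}^{\partial(k)}k'=kk'k^{-1}$.

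\textbf{Main obstacle.} The bookkeeping in Step 2 is where I expect the difficulty. A naive application of interchange fails, because the source of $g$ is $\partial(k)$ while the target of the second group element is $\partial(k)a$ rather than $\partial(k)$. The composable factorisation therefore has to absorb the extra $a$, which the second factor carries through ${}^ay$. So I would first verify the identity in the special cases $k=1$ and $k'=1$, where it reduces to equivariance of the semidirect product, and then treat the general case by splitting $(k,a)=(k,1)\cdot(1,a)$ in $G_1$ and using that the map is already multiplicative on each piece.
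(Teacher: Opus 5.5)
Your proposal is correct and follows the paper's route: the paper likewise compares the $Q_1$-product of the two images with the image of the product and reduces everything to one identity in $P_1$, which it attributes to the crossed-module identities and to the $2$-group action being a groupoid morphism. That is precisely your interchange-law argument, and your checks of sources, targets and units are left implicit in the paper; the factorization you are after is explicit, $h=(k^{-1},\partial(k))$ and $h'=(1,\partial(k)a)((k')^{-1},\partial(k'))(1,a^{-1})=({}^{\partial(k)a}(k')^{-1},\partial(k)a\partial(k')a^{-1})$, with $t(h')=\partial(k)=s(h)$ and $h\circ h'=((k\,{}^ak')^{-1},\partial(k\,{}^ak'))$ because $k^{-1}\,{}^{\partial(k)a}(k')^{-1}=({}^ak')^{-1}k^{-1}$, so the fallback in your last paragraph is unnecessary (and as stated it would not close the argument anyway: splitting $(k,a)=(k,1)(1,a)$ needs the case of a first factor with $a=1$, where naive interchange already applies, rather than the case $k'=1$).
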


\begin{proof} For clarity, we distinguish different groupoid multiplications by subindices in what follows. Let $(k,g),(k',g')\in K\times G_0\cong G_1$ and $(g,p),(g',p')\in Q_1$ be respectively composable. Then we have that
    \[\begin{aligned} 
        &m_{Q_1}((k,g)\cdot (g,p),(k',g')\cdot (g',p'))=\\
        &m_{Q_1}((\partial(k)g,{}^{(k^{-1},\partial(k))}p),(\partial(k')g',{}^{((k')^{-1},\partial(k'))}p'))=\\
        &(\partial(k)g\partial(k')g',m_{P_1}({}^{(k^{-1},\partial(k))}p,{}^{({}^{\partial(k)g}(k')^{-1},\partial(k)g\partial(k'))}p')).
    \end{aligned}\]
    But this last expression agrees with the effect of the action on the composition
    \[ (k({}^gk'),gg')\cdot (gg',p({}^gp'))= (\partial(k({}^gk'))gg',{}^{((k({}^gk'))^{-1},\partial(k)g\partial(k')g^{-1})}(p({}^gp'))),\]
    as a consequence of the crossed module identities and the fact that
    \[ m_{P_1}({}^{(k^{-1},\partial(k))}p,{}^{({}^{\partial(k)g}(k')^{-1},\partial(k)g\partial(k'))}p')={}^{((k({}^gk'))^{-1},\partial(k)g\partial(k')g^{-1})}(p({}^gp')),  \]
    since $G_1\rightrightarrows G_0$ acts on $P_1\rightrightarrows P_0$ by a Lie 2-group action.
\end{proof}

Now we see how to compare this groupoid bundle gerbe with the principal 2-bundle from which it originated. In the course of the following discussion, $1_x$ stands for the unit inclusion of $x\in P_0$.
\begin{proposition}\label{prop:2bun to bunger}
    The Lie groupoids $P_1\rightrightarrows P_0$ and $Q_1\rightrightarrows Q_0$ admit left actions on $\Omega=P_1$ defined as follows:
    \begin{equation}
        \begin{aligned}
           &\alpha_V:P_1\times_{P_0} \Omega= P_1\times_{s, P_0, s} P_1 \to P_1, \qquad (x,y)\mapsto yx^{-1}\\
        &\alpha_H:Q_1\times_{Q_0} \Omega=Q_1\times_{s, P_0, t}P_1\to P_1;
        \end{aligned}\label{eq:action and bundle gerbe}
    \end{equation}
    where $\alpha_H$ is determined by
    \[ \alpha_H((g,r),p)=\begin{cases} {}^gp,\quad &\text{if $r={}^g1_{t(p)}$},\\
    p({}^{(k,1)}1_{s(p)})^{-1}, \quad &\text{if $(g,r)=(\partial(k),{}^{(k^{-1},\partial(k))}1_{t(p)})$}.  
    \end{cases}\]
    Moreover, we have that $\alpha_H$ and $\alpha_V$ together with the two actions of $G_1\rightrightarrows G_0$ described above constitute a $(1,1)$-morphism 
    \begin{equation}
\begin{tikzcd}
	{G_1} & \ast & {P_1} & M & M \\
	{G_0} & \ast & {P_0} & M & M \\
	{Q_1} & {Q_0} & \Omega & M & M \\
	{P_0\times_M P_0} & {P_0} & {P_0} & M & M \\
	{P_0\times_M P_0} & {P_0} & {P_0} & {M} & {M,}
	\arrow[shift left, from=1-1, to=1-2]
	\arrow[shift right, from=1-1, to=1-2]
	\arrow[shift left, from=1-1, to=2-1]
	\arrow[shift right, from=1-1, to=2-1]
	\arrow[shift left, from=1-2, to=2-2]
	\arrow[shift right, from=1-2, to=2-2]
	\arrow[from=1-3, to=1-2]
	\arrow[from=1-3, to=1-4]
	\arrow[shift left, from=1-3, to=2-3]
	\arrow[shift right, from=1-3, to=2-3]
	\arrow[shift left, no head, from=1-4, to=2-4]
	\arrow[shift right, no head, from=1-4, to=2-4]
	\arrow[shift left, no head, from=1-5, to=1-4]
	\arrow[shift right, no head, from=1-5, to=1-4]
	\arrow[shift left, no head, from=1-5, to=2-5]
	\arrow[shift right, no head, from=1-5, to=2-5]
	\arrow[shift left, from=2-1, to=2-2]
	\arrow[shift right, from=2-1, to=2-2]
	\arrow[from=2-3, to=2-2]
	\arrow[from=2-3, to=2-4]
	\arrow[shift left, no head, from=2-5, to=2-4]
	\arrow[shift right, no head, from=2-5, to=2-4]
	\arrow[from=3-1, to=2-1]
	\arrow[shift right, from=3-1, to=3-2]
	\arrow[shift left, from=3-1, to=3-2]
	\arrow[from=3-1, to=4-1]
	\arrow[from=3-2, to=2-2]
	\arrow[from=3-2, to=4-2]
	\arrow[from=3-3, to=2-3]
	\arrow[from=3-3, to=3-2]
	\arrow[from=3-3, to=3-4]
	\arrow[from=3-3, to=4-3]
	\arrow[from=3-4, to=2-4]
	\arrow[from=3-4, to=4-4]
	\arrow[from=3-5, to=2-5]
	\arrow[shift right, no head, from=3-5, to=3-4]
	\arrow[shift left, no head, from=3-5, to=3-4]
	\arrow[from=3-5, to=4-5]
	\arrow[shift left, from=4-1, to=4-2]
	\arrow[shift right, from=4-1, to=4-2]
	\arrow[from=4-3, to=4-2]
	\arrow[from=4-3, to=4-4]
	\arrow[shift left, no head, from=4-5, to=4-4]
	\arrow[shift right, no head, from=4-5, to=4-4]
	\arrow[shift left, no head, from=5-1, to=4-1]
	\arrow[shift right, no head, from=5-1, to=4-1]
	\arrow[shift left, from=5-1, to=5-2]
	\arrow[shift right, from=5-1, to=5-2]
	\arrow[shift right, no head, from=5-2, to=4-2]
	\arrow[shift left, no head, from=5-2, to=4-2]
	\arrow[shift left, no head, from=5-3, to=4-3]
	\arrow[shift right, no head, from=5-3, to=4-3]
	\arrow[from=5-3, to=5-2]
	\arrow[from=5-3, to=5-4]
	\arrow[shift right, no head, from=5-4, to=4-4]
	\arrow[shift left, no head, from=5-4, to=4-4]
	\arrow[shift right, no head, from=5-5, to=4-5]
	\arrow[shift left, no head, from=5-5, to=4-5]
	\arrow[shift left, no head, from=5-5, to=5-4]
	\arrow[shift right, no head, from=5-5, to=5-4]
\end{tikzcd}\label{eq:11 from nonab ger}\end{equation}
where the quotient map $\Omega\to M$ is given by sending $p$ to the $G_0$-orbit of $t(p)$. 
\end{proposition}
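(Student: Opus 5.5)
The plan is to verify the conditions of Definition~\ref{def:1-1-morphism} for the diagram \eqref{eq:11 from nonab ger} one at a time. First I check that the four side morphisms are generalized morphisms. The top row $P_1\rightrightarrows P_0$ with the Lie 2-group action of $G_1\rightrightarrows G_0$ is, by assumption, a $(1,0)$-morphism $M\hormor G$. The left column $Q$ with the action \eqref{eq:act 2group} is a $(0,1)$-morphism. Lemma~\ref{lem:bunger act} gives that this action is a groupoid morphism. Principality with quotient $Q\to (P_0\times_M P_0\rightrightarrows P_0)$ follows from freeness and properness of the $G_1$-action on $P_1$: the orbit of $(a,x)$ is determined by $(t(x),{}^{a^{-1}}s(x))$, and an element $(k,a)$ fixing $(a,x)$ forces $\partial(k)=1$ and ${}^{(k^{-1},1)}x=x$, hence $k=1$. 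The bottom row and right column are identity/unit morphisms, so there is nothing to check.

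Next I treat the two actions on $\Omega=P_1$. For $\alpha_V$, the claim is that the right-multiplication action $(x,y)\mapsto yx^{-1}$ of $P_1\rightrightarrows P_0$ on $s\colon P_1\to P_0$ is principal, with quotient $t$ composed with the $G_0$-orbit map. This is the standard fact that a groupoid acts principally on itself along the source, with quotient $t$. It has to be compatible with the moment maps $\Omega\to P_0$ (the map to $A_0=P_0$) and $\Omega\to M$.

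For $\alpha_H$, I first show that the two cases in its definition are consistent and together determine a well-defined smooth action of $Q$ on $t\colon P_1\to P_0$. The route is to note that every element of $Q_1=G_0\ltimes P_1$ factors as $(g,{}^g1_{y})\cdot(\partial(k),{}^{(k^{-1},\partial(k))}1_{y'})\cdots$. More cleanly, I would rewrite $\alpha_H$ in closed form as
\[
\alpha_H((g,r),p)={}^{g}\bigl(p\cdot\sigma(r)\bigr),
\]
where $\sigma(r)$ is the unique element in the $G_1$-orbit of units that is determined by principality of the $G_1$-action on $P_1$. I then check the action axioms using the semidirect product formula $(a,x)(b,y)=(ab,x({}^ay))$ and the fact that $G_0$ acts by groupoid automorphisms. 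Principality of $\alpha_H$, with quotient $s$ composed with the orbit map to $M$, reduces to principality of the $G_1$-action on $P_1$ together with the unit groupoid base.

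The remaining step is the commutation condition: all four actions (the two $G$-actions on the sides, the induced actions on $\Omega$, $\alpha_H$ and $\alpha_V$) commute as in Definition~\ref{def:1-1-morphism}, and the moment maps are equivariant. Concretely, $\alpha_H$ (a left $G_0$-twisted translation) commutes with $\alpha_V$ (right multiplication) because $G_0$ acts by groupoid automorphisms, via ${}^g(yx^{-1})=({}^gy)({}^gx)^{-1}$. The mixed compatibility involving $G_1$ squares uses the interchange law in the strict 2-group together with the crossed module identities, as in Lemma~\ref{lem:bunger act}.

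I expect the main obstacle to be the well-definedness of $\alpha_H$. The definition is given piecewise on two types of generators, so I must show that on their overlap the two formulas agree and that they extend uniquely and smoothly to all of $Q_1\times_{Q_0}\Omega$. The first thing I would try is the closed form above: express every $(g,r)\in Q_1$ as $r=({}^g1_{t(p)})\cdot u$ with $u$ in the image of $K$. This expression is unique by freeness. I then verify multiplicativity directly using the crossed module identities.
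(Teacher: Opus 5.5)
Your outline matches the paper's: Lemma~\ref{lem:bunger act} for the gerbe side, then well-definedness of $\alpha_H$, then the interchange condition of Definition~\ref{def:1-1-morphism}. There is, however, a genuine gap. The closed form you propose for $\alpha_H$ is wrong, and it is exactly your tool for the step you flag as the main obstacle. Any expression ${}^{g}(p\cdot\sigma(r))$ has target ${}^{g}t(p)=s(r)$, whereas $\alpha_H((g,r),p)$ must lie over $t_Q(g,r)=t(r)$. For the second generator $(g,r)=(\partial(k),{}^{(k^{-1},\partial(k))}1_{t(p)})$ one has $s(r)={}^{\partial(k)}t(p)\neq t(p)=t(r)$. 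Indeed the prescribed value $p({}^{(k,1)}1_{s(p)})^{-1}$ has target $t(p)$. So no choice of $\sigma(r)$ reproduces the definition, and a multiplicativity check built on your formula would fail.

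Two related points. Writing $r={}^{(k',g)}1_{t(p)}$, the paper's factorization is $(g,r)=\iota_1(\partial(k')g,t(p))\,\iota_2({}^{g^{-1}}k'^{-1},t(p))$, so the $G_0$-twist is $\partial(k')g$, not $g$. Also, the two cases overlap only in units, so the real issue is independence of the factorization, not agreement on an overlap. A formula that does work is
\[
\alpha_H((g,r),p)=r\,({}^{g}p),
\]
that is, $Q_1=G_0\ltimes P_1$ acting on $P_1$ by $G_0$-twisted left translation. Both cases follow from the interchange identity ${}^{h}y={}^{t(h)}y\,{}^{h}1_{s(y)}={}^{h}1_{t(y)}\,{}^{s(h)}y$ for $h\in G_1$, $y\in P_1$. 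For example, ${}^{(k^{-1},\partial(k))}1_{t(p)}\,{}^{\partial(k)}p=p\,({}^{(k,1)}1_{s(p)})^{-1}$. With this formula, smoothness is immediate. The action axiom follows from $(a,x)(b,y)=(ab,x\,{}^{a}y)$ and the fact that $G_0$ acts by automorphisms. Principality with quotient $p\mapsto[t(p)]$ is also immediate.

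The commutation step is likewise only asserted. You must show
\[
\alpha_V({}^{k}d,\alpha_H(b,\omega))=\alpha_H(k\cdot b,\alpha_V(d,\omega))\quad\text{for } k=(k',a)\in G_1,\ b=(a,x)\in Q_1.
\]
Your remark that $G_0$ acts by automorphisms covers only $k=1_a$, which is essentially the paper's type~I case. With the closed form above, the general case is the identity
\[
x\,{}^{a}\omega\,\bigl({}^{(k',a)}d\bigr)^{-1}={}^{(k'^{-1},\partial(k'))}x\;{}^{\partial(k')a}(\omega d^{-1}).
\]
This follows from the same interchange identity together with ${}^{(k',1)}1_{{}^{a}t(\omega)}={}^{(k',a)}1_{t(\omega)}$.

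The paper avoids a closed form altogether. It checks that the two factorization orders $\iota_2(k,{}^{g}t(p))\iota_1(g,t(p))=\iota_1(g,t(p))\iota_2({}^{g^{-1}}k,t(p))$ give the same value. It then reduces a general $2\times 2$ configuration to type~I and type~II diagrams.

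Two smaller slips. The quotient of $\alpha_V$ is $t\colon\Omega\to A_0=P_0$, not $t$ followed by the orbit map; that composite is the quotient of $\alpha_H$, to $C_0=M$. And the bottom row is the canonical Morita equivalence $P_0$ between $P_0\times_M P_0$ and $M$, not an identity.
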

\begin{proof} Thanks to Lemma \ref{lem:bunger act}, it only remains to check the compatibility of the actions. Note that each element in $Q_1$ may be written uniquely as a product of two elements of the following forms
    \[ \iota_1(g,x)=(g,{}^g1_x), \qquad \iota_2(k,x)=(\partial(k),{}^{(k^{-1},\partial(k))}1_x); \]
    for some $x\in P_0$. Furthermore, the elements of the second kind cover all the isotropy groups in $Q_1\rightrightarrows Q_0$ and so the expression for $\iota_2$ defines a Lie groupoid morphism $\iota_2:K\times P_0 \to Q_1$, where we view $K\times P_0$ as a bundle of Lie groups over $P_0$. Similarly, $\iota_1:G_0\times P_0 \to Q_1$ is a groupoid morphism with respect to the action groupoid associated to the $G_0$-action on $P_0$. So the formula above is enough to define $\alpha_H$. We need to verify that this is indeed an action and that it is compatible with $\alpha_V$. To verify that $\alpha_H$ is a well defined action, it is enough to check that factorizing an element in $Q_1$ as a product of elements in the images of $\iota_1$ and $\iota_2$ in the two possible orders leads to the same action on $\Omega=P_1$. So consider
    \[ \widebar{q}:=\iota_2(k,{}^gt(p))\iota_1(g,t(p))=\iota_1(g,t(p))\iota_2({}^{g^{-1}}k,t(p))\in Q_1 \]
    where $(k,g)\in K\times G_0$ and $p\in P_1$ and use the first factorization to obtain that
    \[ \alpha_H(\iota_2(k,{}^gt(p)),\alpha_H(\iota_1(g,t(p)),p))=({}^gp)({}^{(k,g)}1_{s(p)})^{-1}. \] 
    On the other hand, using the second factorization, we have that
    \begin{align*}
&\alpha_H(\iota_1(g,t(p)),\alpha_H(\iota_2({}^{g^{-1}}k,t(p)),p))  =\alpha_H(\iota_1(g,t(p)) ,p({}^{({}^{g^{-1}}k,1)}1_{s(p)})^{-1})=\\
        &={}^g(p({}^{({}^{g^{-1}}k,1)}1_{s(p)})^{-1})=({}^gp)({}^{(k,g)}1_{s(p)})^{-1};
    \end{align*} 
    where we use the fact that $G_0$ acts by automorphisms on $P_1\rightrightarrows P_0$. This confirms that the action of $\widebar{q}$ is unambiguously defined and it also implies that $\alpha_H$ is an action.  
    
    To make the argument clearer, we denote vertical and horizontal actions by concatenation of boxes in what follows. We may immediately verify the compatibility of the actions in \eqref{eq:action and bundle gerbe} for these two special situations: 
    
    \[ \begin{tikzpicture}[x=2cm, y=2cm] 
    
    \draw[thin] (0,0) grid[step=1] (2,2);

    \node at (0.5, 1.5) {$(1,g)$};
    
    \node at (1.5, 1.5) {$q$};
    
    \node at (0.5, 0.5) {$\iota_1(g,t(p))$};
    
    \node at (1.5, 0.5) {$p$};

\end{tikzpicture} \quad\begin{tikzpicture}[x=2cm, y=2cm] 

    \draw[thin] (0,0) grid[step=1] (2,2);

    \node at (0.5, 1.5) {$(k',\partial(k))$};
    
    \node at (1.5, 1.5) {$q$};
    
    \node at (0.5, 0.5) {$\iota_2(k,t(p))$};
    
    \node at (1.5, 0.5) {$p$};

\end{tikzpicture}\]
where $p,q\in P_1$ and $k,k'\in K$, $g\in G_0$. We say that the first diagram is of type I and the second one of type II. The general case reduces to these two situations by suitable factorizations. In fact, consider the following diagram
\[ \begin{tikzpicture}
    \begin{scope}[step=1.5cm]
        
        \draw[thin] (0,0) grid (3,3);

        \node at (0.75, 2.25) {$(k,g)$}; 
        \node at (2.25, 2.25) {$q$}; 
        \node at (0.75, 0.75) {$(g,r)$}; 
        \node at (2.25, 0.75) {$p$}; 
    \end{scope}

    \node[font=\Huge] at (4, 1.5) {$=$};

    \begin{scope}[shift={(5cm,0)}, x=2cm, y=1.5cm]

        \draw[thin] (0,0) rectangle (3,2);
        \draw[thin] (0,1) -- (3,1);
        \draw[thin] (1,0) -- (1,2);
        \draw[thin] (2,0) -- (2,2);
        \node at (0.5, 1.5) {$(1,g')$};
        \node at (1.5, 1.5) {$(k',\partial(l))$};
        \node at (2.5, 1.5) {$q$};
        \node at (0.5, 0.5) {$\iota_1(g',t(p))$};
        \node at (1.5, 0.5) {$\iota_2(l,t(p))$};
        \node at (2.5, 0.5) {$p$};
    \end{scope}
\end{tikzpicture}\]
in which we factorize $(g,r)=\iota_1(g',t(p))\iota_2(l,t(p))$ for some $(l,g')\in K\times G_0$ and then we express accordingly $(k,g)=(1,g')(k',\partial(l))$ for some $k'\in K$. Then, the fact that the action of the last four squares in the diagram above is well defined follows from the fact that it is a diagram of type II. But then the action of the elements in the first column on the result of acting with the second column on the third is
\[ \begin{tikzpicture}[y=2cm] 
    \draw[thin] (0,0) rectangle (7.5, 2);
    \draw[thin] (2.5, 0) -- (2.5, 2);
    \draw[thin] (0,1) -- (7.5, 1);

    \node at (1.25, 1.5) {$(1,g')$};
    \node at (1.25, 0.5) {$\iota_1(g',t(p))$};
    \node at (5.0, 1.5) {${}^{(k',\partial(l))}q$};
    \node at (5.0, 0.5) {$\alpha_H(\iota_2(l,t(p)),p)$};

\end{tikzpicture}\]
which is also well defined because it is a diagram of type I. This confirms that the starting diagram determines a well defined action and hence the actions in \eqref{eq:action and bundle gerbe} are compatible and thus we have a $(1,1)$-morphism as required. \end{proof}

Applying Theorem \ref{thm:1-1-morph-to-2-hom-diamond} to \eqref{eq:11 from nonab ger}, we obtain the following result.
\begin{corollary}\label{cor:nonab ger}
    A principal 2-bundle $P=(P_1\rightrightarrows P_0)$ over $M$ with structure 2-group $G=(G_1\rightrightarrows G_0)$ and its associated groupoid bundle gerbe $Q=(Q_1\rightrightarrows Q_0)$ determine equivalent anafunctors from $\widebar{W}M=M$ to $\Wbar G$:
    \[\begin{tikzcd}[ampersand replacement=\&,sep=scriptsize]
	\& { \widebar{W} \widetilde{P}^h} \\
	{M} \& {\widebar{W} \widetilde{\widetilde{\Omega}}} \& {\widebar{W} G} \\
	\& { \widebar{W} \widetilde{Q}^v\times_{\Wbar (P_0\times_M P_0)} \Wbar\widetilde{P_0}^h}
	\arrow["\sim"', two heads, from=1-2, to=2-1]
	\arrow[from=1-2, to=2-3]
	\arrow["\sim"', from=2-2, to=1-2]
	\arrow["\sim"', two heads, from=2-2, to=2-1]
	\arrow["\sim", from=2-2, to=3-2]
	\arrow["\sim", two heads, from=3-2, to=2-1]
	\arrow[from=3-2, to=2-3]
\end{tikzcd}\]
where we use the notation in \eqref{eq:11 from nonab ger}. \qed
\end{corollary}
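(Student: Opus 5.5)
The plan is to obtain the corollary as a direct specialization of Theorem~\ref{thm:1-1-morph-to-2-hom-diamond} to the $(1,1)$-morphism \eqref{eq:11 from nonab ger} of Proposition~\ref{prop:2bun to bunger}. In the notation of that theorem, the corners are
\[
G=\widebar{J}:=M,\qquad J=P_0\times_M P_0 \ \text{(double groupoid with both directions submersion-type as in the diagram)},\qquad H=M,\qquad K=G.
\]
The sides are
\begin{itemize}
\item $A=P_0$, the horizontal morphism between unit double groupoids over $P_0$ and $M$;
\item $B=Q$, the groupoid bundle gerbe, a vertical morphism into the Lie $2$-group;
\item $C=M$, the identity vertical morphism;
\item $D=P$, the principal $2$-bundle.
\end{itemize}

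The first step is to check the fullness hypotheses. A Lie $2$-group viewed as a double groupoid with trivial side is full, since its double source map $G_1\to G_0\times \ast$ is $s$, a surjective submersion. Unit double groupoids over $M$ or $P_0$ are trivially full. The double groupoid built from the submersion groupoid $P_0\times_M P_0$ (pair-type in one direction, unit in the other) is full, because its double source map is an identity-type diffeomorphism.

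Theorem~\ref{thm:1-1-morph-to-2-hom-diamond} then supplies the diamond with apex pullbacks
\[
\Wbar\vertilde{C}\times_{\Wbar H}\Wbar\hortilde{D}
\qquad\text{and}\qquad
\Wbar\hortilde{A}\times_{\Wbar J}\Wbar\vertilde{B},
\]
together with weak equivalences from $\Wbar\widetilde{\widetilde{\Omega}}$ and a hypercover to $\Wbar M$.

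The second step is to identify these pieces with the ones displayed in the corollary.
\begin{itemize}
\item Since $\Wbar$ of a manifold regarded as a unit double groupoid is the constant simplicial manifold, $\Wbar M=M$ and $\Wbar H=M$.
\item The side $C$ is the identity bibundle of $M$, so its biaction double groupoid is isomorphic to $M$ via the projection. Hence the top pullback reduces to $\Wbar\hortilde{P}$, where $\hortilde{P}=\hortilde{D}$.
\item The bottom pullback is $\Wbar\vertilde{Q}\times_{\Wbar(P_0\times_M P_0)}\Wbar\widetilde{P_0}^h$, which is exactly what is written in the corollary.
\end{itemize}

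The only point needing care is this bookkeeping of the orientation conventions, matching which corner of \eqref{eq:11 from nonab ger} plays $G,H,J,K$. The analytic content is already contained in Theorem~\ref{thm:1-1-morph-to-2-hom-diamond} and Proposition~\ref{prop:2bun to bunger}.
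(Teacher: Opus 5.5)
Your proposal is correct and follows the paper's own argument: the paper obtains the corollary by applying Theorem~\ref{thm:1-1-morph-to-2-hom-diamond} to the $(1,1)$-morphism \eqref{eq:11 from nonab ger}. Your fullness checks and your reduction of $\Wbar\vertilde{C}\times_{\Wbar H}\Wbar\hortilde{D}$ to $\Wbar\hortilde{P}$ (using $\vertilde{C}\cong M$) just spell out what the paper leaves implicit. One cosmetic slip: $J$ is the submersion groupoid $P_0\times_M P_0\rightrightarrows P_0$ in one direction and a unit groupoid in the other, as you correctly use in your fullness check, not ``submersion-type in both directions''; accordingly, $A=P_0$ is the Morita bibundle from $M$ to this $J$, not a morphism between unit double groupoids.
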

This result is consistent with the modern approach to (non-abelian) cocycles and their equivalence in terms of 2-morphisms, see e.g. \cite[\S 7]{Wolfson2016} and \cite{NikolausSchreiberStevenson2015}. Note that the other two models for non-abelian gerbes compared in \cite{NikolausWaldorf2013} already have the form of generalized morphisms into the structure 2-group, being given by non-abelian Cech cocycles or maps into the classifying space of the Lie 2-group. Therefore, combining these facts with the work in \cite{GinotStienon2015}, all these models of non-abelian gerbes are unified by the language of 2-morphisms and anafunctors of 2-groupoids.  

More importantly, we expect to use Corollary \ref{cor:nonab ger} for comparing different notions of connections (or connective structures) and parallel transport for either principal 2-bundles or groupoid bundle gerbes \cite{AschieriCantiniJurco2005,BaezSchreiber2007,LaurentGengouxStienonXu2009,MartinsPicken2011}, and, therefore, unify their study in higher gauge theory through the use of simplicial methods, as proposed in \cite{JurcoSaemannWolf2016}. 

\section{Double Morita equivalences from groupoid factorizations and Manin triples}\label{sec:int manin}
A natural question in Lie theory is how to integrate infinitesimal objects (Lie algebra-like objects) to global objects, which typically take the form of higher Lie groupoids. In particular, it has been proposed that Lie bialgebroids \cite{MackenzieXu1994} integrate to symplectic double groupoids \cite{Weinstein1988,LuWeinstein1989}. Such a problem was solved for certain Lie bialgebroids in \cite{Alvarez2023}, using ideas coming from shifted symplectic geometry and recovering a construction of double Lie groupoids which, in its general form, goes back to \cite{AndruskiewitschNatale2009}. 

On the other hand, symplectic $(1,1)$-morphisms were introduced in
\cite{AGJ2024} in order to relate the different Manin triples arising
from a given generalized K\"ahler manifold. Since Lie bialgebroids
give rise to Manin triples within Courant algebroids \cite{liuweixu},
we may apply $\Wbar$ to a symplectic double groupoid in order to obtain
the integration of the corresponding Courant algebroid
\cite{MehtaTang2011}. Here, we reconcile the integration conducted in
\cite{Alvarez2023} with the construction of $(1,1)$-morphisms in the
generalized K\"ahler situation described in \cite{AGJ2024}.

The work in
\cite{Alvarez2023} applies to Manin triples sitting inside special Courant algebroids called heterotic. A Courant algebroid $E$ over $M$ is heterotic if the transitive
Lie algebroid $E/\rho^*(T^*M)$ is integrable, where $\rho:E\to TM$ is the anchor map, and $E$ is identified with $E^*$ using its  metric. Consider a pair of Lie groupoids $C,D$ integrating transverse Dirac structures
$\mathfrak C,\mathfrak D\subset E$ in a heterotic Courant algebroid. Then, the main result in
\cite{Alvarez2023} shows how to construct a symplectic double groupoid which integrates the Manin triple $(E,\mathfrak{C},\mathfrak{D})$.
Suppose that we have additional Dirac structures
$\mathfrak A,\mathfrak B\subset E$ which are still suitably transverse
and possess corresponding integrations $A$ and $B$. Our main result in this section relates the corresponding integrations of $(E,\mathfrak{A},\mathfrak{B})$ and of $(E,\mathfrak{C},\mathfrak{D})$. 

Firstly, let us explain the relationship that we assume to exist between $(E,\mathfrak{A},\mathfrak{B})$ and $(E,\mathfrak{C},\mathfrak{D})$ at the infinitesimal level. We may view $\mathfrak A,\mathfrak B$
as determining Maurer--Cartan elements in the differential graded
Lie algebras controlling deformations of $\mathfrak C$ and
$\mathfrak D$ as Dirac structures inside $E$ \cite{liuweixu}.
Relative to $E=\mathfrak C\oplus\mathfrak D$, we write
\[
\begin{aligned}
\mathfrak A
 &=\{c+\varepsilon^\sharp(c):c\in\mathfrak C\},
&
\varepsilon&\in\Gamma(\wedge^2\mathfrak C^*),
&
d_{\mathfrak C}\varepsilon
 +\tfrac12[\varepsilon,\varepsilon]_{\mathfrak D}&=0,
\\
\mathfrak B
 &=\{\pi^\sharp(d)+d:d\in\mathfrak D\},
&
\pi&\in\Gamma(\wedge^2\mathfrak D^*),
&
d_{\mathfrak D}\pi
 +\tfrac12[\pi,\pi]_{\mathfrak C}&=0.
\end{aligned}
\]

Here, we show that the global counterpart of such a pair of
infinitesimal deformations is a symplectic $(1,1)$-morphism, which
then, under $\Wbar$, induces symplectically Morita equivalent
integrations of the same underlying transitive Courant algebroid
$E$. In other words, the main result of this section is an
integration of the infinitesimal diagram of deformations of
Dirac structures indicated below:
\[
\begin{tikzcd}[
  row sep=large,
  column sep=huge,
  cells={anchor=center}
]
(E;\mathfrak A,\mathfrak B)
&
(E;\mathfrak C,\mathfrak B)
  \arrow[l, swap, "\mathfrak C\rightsquigarrow\mathfrak A"]
\\
(E;\mathfrak A,\mathfrak D)
  \arrow[u, "\mathfrak D\rightsquigarrow\mathfrak B"]
&
(E;\mathfrak C,\mathfrak D)
  \arrow[l, "\mathfrak C\rightsquigarrow\mathfrak A"]
  \arrow[u, swap, "\mathfrak D\rightsquigarrow\mathfrak B"]
\end{tikzcd}
\]
Note that this diagram generalizes the situation considered in
\cite[\S 1]{AGJ2024} to the case of a transitive Courant algebroid.
We emphasize that our technique of integration is not based on
$B$-field deformations as in \cite{AGJ2024} but is instead an
extension of the method used in \cite{Alvarez2023}.
\subsection{(1,1)-morphisms from groupoid factorizations} Let $A, B,C,D$ and $K$ be Lie groupoids over the same base $M$ and suppose that we have Lie groupoid morphisms $\Phi_A:A\to K$, $\Phi_B:B\to K$, $\Phi_C:C\to K$ and $\Phi_D:D\to K$ such that the following pairs of morphisms are transverse 
\[ (\Phi_A,\Phi_B),\quad (\Phi_B,\Phi_C),\quad (\Phi_A,\Phi_D),\quad (\Phi_C,\Phi_D). \]
Then we may produce several fiber products using this notation
\begin{align}
    \X_{U,V}^{W,Z}=\{
    (u,v,w,z)\in U_s \times_{t} V \times W_s\times_{t} Z\ |\ \Phi_U(u)\Phi_V(v)=\Phi_W(w)\Phi_Z(z)\}.\label{eq:slim}
\end{align}
The natural projection maps give rise to a $(1,1)$-morphism of double Lie groupoids
\begin{equation}
\begin{tikzcd}
	{\X_{A,B}^{B,A}} & A & {\X_{A,B}^{B,C}} & C & {\X_{C,B}^{B,C}} \\
	B & M & B & M & B \\
	{\X_{A,D}^{B,A}} & A & \X_{A,D}^{B,C} & C & {\X_{C,D}^{B,C}} \\
	D & M & D & M & D \\
	{\X_{A,D}^{D,A}} & A & {\X_{A,D}^{D,C}} & C & {\X_{C,D}^{D,C}}
	\arrow[shift left, from=1-1, to=1-2]
	\arrow[shift right, from=1-1, to=1-2]
	\arrow[shift left, from=1-1, to=2-1]
	\arrow[shift right, from=1-1, to=2-1]
	\arrow[shift left, from=1-2, to=2-2]
	\arrow[shift right, from=1-2, to=2-2]
	\arrow[from=1-3, to=1-2]
	\arrow[two heads, from=1-3, to=1-4]
	\arrow[shift left, from=1-3, to=2-3]
	\arrow[shift right, from=1-3, to=2-3]
	\arrow[shift left, from=1-4, to=2-4]
	\arrow[shift right, from=1-4, to=2-4]
	\arrow[shift left, from=1-5, to=1-4]
	\arrow[shift right, from=1-5, to=1-4]
	\arrow[shift left, from=1-5, to=2-5]
	\arrow[shift right, from=1-5, to=2-5]
	\arrow[shift left, from=2-1, to=2-2]
	\arrow[shift right, from=2-1, to=2-2]
	\arrow[from=2-3, to=2-2]
	\arrow[two heads, from=2-3, to=2-4]
	\arrow[shift left, from=2-5, to=2-4]
	\arrow[shift right, from=2-5, to=2-4]
	\arrow[from=3-1, to=2-1]
	\arrow[shift right, from=3-1, to=3-2]
	\arrow[shift left, from=3-1, to=3-2]
	\arrow[two heads, from=3-1, to=4-1]
	\arrow[from=3-2, to=2-2]
	\arrow[two heads, from=3-2, to=4-2]
	\arrow[from=3-3, to=2-3]
	\arrow[from=3-3, to=3-2]
	\arrow[two heads, from=3-3, to=3-4]
	\arrow[two heads, from=3-3, to=4-3]
	\arrow[from=3-4, to=2-4]
	\arrow[two heads, from=3-4, to=4-4]
	\arrow[from=3-5, to=2-5]
	\arrow[shift right, from=3-5, to=3-4]
	\arrow[shift left, from=3-5, to=3-4]
	\arrow[two heads, from=3-5, to=4-5]
	\arrow[shift left, from=4-1, to=4-2]
	\arrow[shift right, from=4-1, to=4-2]
	\arrow[from=4-3, to=4-2]
	\arrow[two heads, from=4-3, to=4-4]
	\arrow[shift left, from=4-5, to=4-4]
	\arrow[shift right, from=4-5, to=4-4]
	\arrow[shift right, from=5-1, to=4-1]
	\arrow[shift left, from=5-1, to=4-1]
	\arrow[shift left, from=5-1, to=5-2]
	\arrow[shift right, from=5-1, to=5-2]
	\arrow[shift right, from=5-2, to=4-2]
	\arrow[shift left, from=5-2, to=4-2]
	\arrow[shift left, from=5-3, to=4-3]
	\arrow[shift right, from=5-3, to=4-3]
	\arrow[from=5-3, to=5-2]
	\arrow[two heads, from=5-3, to=5-4]
	\arrow[shift right, from=5-4, to=4-4]
	\arrow[shift left, from=5-4, to=4-4]
	\arrow[shift right, from=5-5, to=4-5]
	\arrow[shift left, from=5-5, to=4-5]
	\arrow[shift left, from=5-5, to=5-4]
	\arrow[shift right, from=5-5, to=5-4]
\end{tikzcd}\label{eq:11 from gpd fact}
\end{equation}
To explain the actions and groupoid structures therein it is convenient to adopt the matrix notation
\[ (u,v,w,z)=\begin{bmatrix}
  & w &   \\
u &   & z \\
  & v &  
\end{bmatrix}. \]
So, for instance, the horizontal multiplication on $\mathcal{X}_{A,B}^{B,A}$ is given by
\[ \begin{bmatrix}
  & b_1 &   \\
a_1 &   & a_2 \\
  & b_2 &  
\end{bmatrix} \Join \begin{bmatrix}
  & b_1' &   \\
a_2 &   & a_3 \\
  & b_2' &  
\end{bmatrix}=\begin{bmatrix}
  & b_1b_1' &   \\
a_1 &   & a_3 \\
  & b_2b_2' &  
\end{bmatrix}, \]
the vertical multiplication is defined by the analogous expression
\[ \begin{bmatrix}
  & b_1 &   \\
a_1 &   & a_2 \\
  & b_2 &  
\end{bmatrix} \vJoin \begin{bmatrix}
  & b_2 &   \\
a_1' &   & a_2' \\
  & b_3 &  
\end{bmatrix}=\begin{bmatrix}
  & b_1 &   \\
a_1a_1' &   & a_2a_2' \\
  & b_3 &  
\end{bmatrix},\]
and all the other groupoid structures and actions are defined by formally identical expressions. We shall say, following \cite{AndruskiewitschNatale2009}, that a double Lie groupoid is {\bf slim} if each of its squares is determined by its boundary, such is the case for $\X_{U,V}^{V,U}$ as in \eqref{eq:slim}. Such a double Lie groupoid is full if the following map is a surjective submersion 
\[ U_s\times_{t} V\to K, \qquad (u,v)\mapsto \Phi_U(u)\Phi_V(v), \]
see \cite{AndruskiewitschOchoaTiraboschi2012}. In this situation, we say that $U$ and $V$ provide a {\bf groupoid factorization} for $K$, see \cite{AndruskiewitschNatale2009,AndruskiewitschOchoaTiraboschi2012}. We assume that such a condition is met in what follows; without it, the statements that follow still admit natural local versions. 
\begin{proposition}\label{pro:11 gpd fac}
    The diagram in \eqref{eq:11 from gpd fact} is a $(1,1)$-morphism in which all the generalized morphisms are Morita equivalences. 
\end{proposition}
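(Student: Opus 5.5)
The plan is to verify every axiom directly from the matrix description. The key simplification is that all the double groupoids and bibundles in \eqref{eq:11 from gpd fact} are slim: a square is determined by its four edges $(u,v,w,z)$, and the only constraint is $\Phi_U(u)\Phi_V(v)=\Phi_W(w)\Phi_Z(z)$. So every equivariance, compatibility and principality statement reduces to an identity in the groupoids $A,B,C,D$ together with the multiplicativity of the $\Phi$'s.

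First I would establish smoothness. Each $\X_{U,V}^{W,Z}$ is the preimage of the graph of multiplication in $K$ under the map $(u,v,w,z)\mapsto(\Phi_U(u)\Phi_V(v),\Phi_W(w)\Phi_Z(z))$. The transversality assumptions on the pairs $(\Phi_A,\Phi_B),(\Phi_B,\Phi_C),(\Phi_A,\Phi_D),(\Phi_C,\Phi_D)$ should make this a transverse intersection. They should also make the edge projections, such as $\X_{A,B}^{B,C}\to C$ and $\X_{A,B}^{B,C}\to A$, submersions. I would then write down all actions by the same formulas as the multiplications. For example, $\X_{A,B}^{B,A}$ acts on $\X_{A,B}^{B,C}$ by
\[
\begin{bmatrix} & b_1 & \\ a_1 & & a \\ & b_2 & \end{bmatrix}\Join\begin{bmatrix} & b & \\ a & & c \\ & b' & \end{bmatrix}=\begin{bmatrix} & b_1b & \\ a_1 & & c \\ & b_2b' & \end{bmatrix},
\]
and similarly for the other cases. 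For the middle row, the vertical actions on the side bibundles $B$ and $D$ over $M$ are given by conjugating with the vertical edges.

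That these formulas define groupoid morphisms, and that each pair of actions commutes, follows from associativity in $A,B,C,D$. In particular, the four-action compatibility of Definition \ref{def:1-1-morphism} for $\Omega=\X_{A,D}^{B,C}$ amounts to the statement that multiplying the top, bottom, left and right edges independently commutes.

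Next I would prove biprincipality, i.e. that each side is a Morita equivalence. I take the top row as the model; the others follow by the same argument with letters permuted, and by vertical/horizontal duality (Remark \ref{rem:vert-hor-duality}). Freeness and transitivity on the fibres of the moment map are immediate. Suppose two elements of $\X_{A,B}^{B,C}$ share the same $c$, the same vertical source/target data, and left edges $a,a_1$. Then the unique square relating them has edges $b_1=b''b^{-1}$ and $b_2=b'''b'^{-1}$, and its defining $\Phi$-equation is automatically satisfied by the multiplicativity of the $\Phi$'s. This gives the isomorphism with the submersion double groupoid required in \eqref{diag:left-principal}, and the right action is handled symmetrically.

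The remaining point, and the one I expect to be the main obstacle, is that the quotient maps $\X_{A,B}^{B,C}\to C$ (and $\to A$) are surjective submersions. To hit a given $c$, with $b'$ chosen to be a unit, one must write $\Phi_C(c)$ as $\Phi_A(a)\Phi_B(b)$ with compatible endpoints. This is exactly where the groupoid factorization hypothesis is used: the fullness of $\X_{A,B}^{B,A}$ means that $(u,v)\mapsto\Phi_A(u)\Phi_B(v)$ is a surjective submersion onto $K$. Surjectivity is then achieved by pulling back this map along $\Phi_C$, and submersivity follows because a pullback of a surjective submersion is one. The same argument with the pairs $(A,D)$, $(C,B)$, $(C,D)$ handles the other three sides and the two moment maps of $\Omega$. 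The delicate part is tracking the base points: the source and target matching conditions at the corners must be shown to be implied by the factorization, so that the fibre products involved are the correct ones. Finally, the level-$0$ pieces ($B$, $D$, $A$, $C$ as bibundles over $M$) are trivially biprincipal, being identity-type bibundles of the side groupoids.
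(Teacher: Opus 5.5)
Your proposal is correct and follows the paper's argument, with more detail than the paper gives: the actions are well defined and commute because the $\Phi$'s are multiplicative and $A,B,C,D$ are associative, principality comes from dividing edges in $A,B,C,D$, and the moment maps are surjective submersions by the groupoid factorization (fullness) hypotheses. Two slips to fix: first, every action, including those on the level-$0$ pieces $B$ and $D$, is a left or right multiplication, as your last sentence says, not a conjugation; second, to get submersivity at every point (and not only along the slice where one $B$-edge is a unit), factor e.g.\ $\X_{A,B}^{B,C}\to C$ as $(A_s\times_t B)\times_K(B_s\times_t C)\to B_s\times_t C\to C$, which is a pullback of the factorization map followed by a pullback of $s\colon B\to M$.
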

\begin{proof}
The defining equations of the spaces $\X_{U,V}^{W,Z}$ are preserved by
the componentwise groupoid multiplications, since all the maps $\Phi_U$
are Lie groupoid morphisms. Hence, the actions above are well defined, and their associativity and
mutual commutativity follow directly from the corresponding properties
of the groupoid multiplications.

The fullness assumptions ensure that the relevant moment maps are
surjective submersions. Moreover, every action is principal since it is determined by groupoid multiplication on either $A,B,C$ or $D$. Thus all the bibundles are biprincipal, and hence define Morita equivalences. Therefore, \eqref{eq:11 from gpd fact} is a $(1,1)$-morphism with the asserted properties.
\end{proof}
As a consequence, the Lie 2-groupoids obtained by applying $\Wbar$ to \eqref{eq:11 from gpd fact} are related by a $(1,1)$-morphism of Lie 2-groupoids. In what follows we outline a consequence of this observation in the context of the integration of Manin triples by symplectic double groupoids.
\subsection{Integration of Manin triples and symplectic (1,1)-morphisms} Suppose that the Lie groupoids $A,B,C,D$ are respective integrations of Dirac structures $\mathfrak{A}, \mathfrak{B},\mathfrak{C}, \mathfrak{D}$ inside a transitive Courant algebroid $E\to M$ satisfying the transversality condition
\[ E=\mathfrak{A}\oplus \mathfrak{B}=\mathfrak{B}\oplus \mathfrak{C}=\mathfrak{A}\oplus \mathfrak{D}=\mathfrak{C}\oplus \mathfrak{D}; \]
and further suppose that $K$ is an integration of the Lie algebroid $E/\rho^*(T^*M)$, see \cite{Alvarez2023} for a detailed discussion. The double groupoids defined therein are exactly given by the construction described above, but, in addition, they are equipped with symplectic structures, as explained below. In fact, \eqref{eq:11 from gpd fact} may be rendered into a {\bf symplectic $(1,1)$-morphism} in this situation. To that end, it is enough to equip the spaces $\X_{U,V}^{W,Z}$ with suitable symplectic forms. Recall that if $G\rightrightarrows S$ is a groupoid equipped with a symplectic form and $P$ is a symplectic manifold, an action of $G$ on a map $P\to S$ is {\bf symplectic} if its graph is Lagrangian in $G\times P \times \overline{P}$, where the overline means that the symplectic form is taken with a sign on that factor. In particular, a symplectic form $\omega$ (or just a 2-form) on $G$ is {\bf multiplicative} if the action of $G$ on itself by left multiplication is symplectic (for a possibly degenerate 2-form, the multiplicativity condition is $\delta \omega=0$, for $\delta$ the simplicial differential).

\begin{definition}[Symplectic $(1,1)$-morphism {\cite{AGJ2024}}]
A $(1,1)$-morphism is {\bf symplectic} if its four corner double Lie groupoids are symplectic double groupoids, its four edge groupoids are symplectic groupoids, its central manifold is symplectic, and every groupoid and double-groupoid action appearing in its defining diagram is symplectic.
\end{definition}

Recall that the Bott-Shulman-Stasheff double complex of a simplicial manifold $X_\bullet$ is given by the complexes of differential forms on each degree: $(\Omega^m(X_n),d,\delta)$. where $d$ is the de Rham differential and $\delta$ is the simplicial differential. In the situation at hand, there is a distinguished closed element $\omega_1+\omega_2$ on $(\Omega^m(K_n),d,\delta)$ which classifies a certain multiplicative Courant algebroid (or CA-groupoid) over $K$ as in \cite[Thm. 3.5]{Alvarez2023}, where $\omega_1\in \Omega^3(K)$ and $\omega_2\in \Omega^2(K_2)$.\footnote{In this notation $K_2 := K^{(2)}=K\times_{s,M,t} K$, $K_1:=K$, and $K_0 = M$.} This Courant algebroid is $TK\oplus T^*K$ with the bracket twisted by $\omega_1$ and with its standard multiplication deformed by $\omega_2$. These differential forms are obtained by splitting the (exact) action Courant algebroid determined by $E\times E$ acting on the anchor morphism $(t,s):K\to M\times M$, where this action Courant algebroid structure is defined on $t^*E\oplus s^*E$, see \cite[Thm. 3.5]{Alvarez2023} (we don't specify the signs of the pairing on $E\times E$ for simplicity). So, a choice of splitting identifies $t^*E\oplus s^*E$ with $TK\oplus T^*K$ and, in terms of this splitting, the graph of the natural groupoid multiplication on $t^*E\oplus s^*E$ becomes the graph of $\omega_2$, see \cite[Prop. 2.3, Thm. 2.8]{Alvarez2023}. 

On the other hand, under natural connectivity assumptions, $A,B,C,D$ may be equipped with respective 2-forms $\sigma_A,\sigma_B,\sigma_C,\sigma_D$ which serve as respective primitives for the pullbacks of $\omega_1+\omega_2$ along $\Phi_A, \Phi_B,\Phi_C, \Phi_D$ and satisfy a certain nondegeneracy condition making them into {\em 2-shifted lagrangian structures} \cite[\S 4.2]{Alvarez2023}. We denote the component projections on $\X_{U,V}^{W,Z}$ by
\[
\pi_U,\pi_V,\pi_W,\pi_Z, 
\] and the induced maps to $K_2$ by
\[
q_{U,V}:=(\Phi_U\circ\pi_U,\Phi_V\circ\pi_V),
\qquad
q_{W,Z}:=(\Phi_W\circ\pi_W,\Phi_Z\circ\pi_Z).
\]
These maps allow us to define the 2-form on $\X_{U,V}^{W,Z}$ by
\begin{equation}
\begin{aligned}
\omega_{U,V}^{W,Z}
={}&
\pi_U^*\sigma_U-\pi_Z^*\sigma_Z
-\pi_W^*\sigma_W+\pi_V^*\sigma_V-q_{U,V}^*\omega_2+q_{W,Z}^*\omega_2 .
\end{aligned}
\label{eq:2form}
\end{equation}
To show nondegeneracy of this 2-form we use the following lemma.
\begin{lemma}\label{lem:nondegenerate-bimodule}
Let $(G,\omega_G)\rightrightarrows M$ and
$(H,\omega_H)\rightrightarrows N$ be symplectic groupoids, and let
$P$ be a biprincipal $G$--$H$ bibundle equipped with a $2$-form
$\omega_P$. If the graphs of the left and right actions are isotropic
with respect to the corresponding product forms, then $\omega_P$ is
nondegenerate.
\end{lemma}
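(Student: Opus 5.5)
The plan is to show directly that $\ker\omega_P=0$. The argument has three ingredients: the isotropy of the action graphs, read as a multiplicativity identity; the identification $A_G\cong T^*M$ induced by $\omega_G$ (and likewise $A_H\cong T^*N$); and principality of the two actions. Write $J_G\colon P\to M$ and $J_H\colon P\to N$ for the moment maps. Biprincipality means that $J_H$ is the quotient map for the free, proper $G$-action, and symmetrically for $J_G$. In particular both moment maps are surjective submersions, and
\[
\ker dJ_H=\{a_P : a\in A_G\},\qquad \ker dJ_G=\{b_P : b\in A_H\},
\]
where $a\mapsto a_P$ denotes the infinitesimal action. By freeness, $a\mapsto a_P(p)$ is injective on $(A_G)_{J_G(p)}$, and the same holds for $H$.

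\textbf{Step 1.} Translate isotropy of the graph $\{(g,p,g\cdot p)\}\subset G\times P\times\overline{P}$ into the identity
\[
\mathrm{act}^*\omega_P=\mathrm{pr}_G^*\omega_G+\mathrm{pr}_P^*\omega_P\quad\text{on } G\times_{s,M,J_G}P,
\]
where the graph is parametrized by $G\times_M P$. Evaluate this identity along the units $1_{J_G(p)}$, using tangent vectors $(a,0)$ with $a\in A_G$ (a vertical vector at the unit, so $ds(a)=0$ and the pair is tangent to the fibre product) together with vectors $(dJ_G w, w)$. This yields the moment-map identity
\[
\iota_{a_P}\omega_P=\pm J_G^*\bigl(\iota_a\omega_G|_{TM}\bigr).
\]
Since $\omega_G$ is symplectic and $M$ is Lagrangian, the map $a\mapsto \iota_a\omega_G|_{TM}$ is the standard isomorphism $A_G\cong T^*M$. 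The same computation for the right action gives the analogous identity for $b_P$, $b\in A_H$, with $A_H\cong T^*N$. This is the step where bookkeeping can go wrong: one has to pick tangent vectors in the fibre product correctly and keep track of the sign coming from $\overline{P}$. I would carry it out exactly as in the classical proof that symplectic-groupoid actions have $J$ as a moment map. The signs play no role in what follows.

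\textbf{Step 2.} Fix $v\in\ker(\omega_P)_p$. For every $a\in A_G$ we get $0=\omega_P(a_P,v)=\pm\langle \iota_a\omega_G|_{TM},dJ_G v\rangle$. Because $A_G\to T^*M$ is surjective, this forces $dJ_G v=0$. Applying the same argument to the right $H$-action gives $dJ_H v=0$. By principality of the $G$-action, $v=a_P$ for some $a\in A_G$. Then
\[
0=\iota_v\omega_P=\pm J_G^*\bigl(\iota_a\omega_G|_{TM}\bigr),
\]
and since $J_G$ is a submersion, $J_G^*$ is injective on covectors, so $\iota_a\omega_G|_{TM}=0$. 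Hence $a=0$, and therefore $v=0$.

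This shows that $\omega_P$ is nondegenerate. As a consistency check, the count $\dim P=\dim M+\dim N$ (from $\dim G=2\dim M$ and principality) shows that the graphs are then in fact Lagrangian. Note that only one action plus injectivity of $J_G^*$ is needed once $dJ_H v=0$ is known; however, that fact itself uses the second action, so both isotropy hypotheses genuinely enter.
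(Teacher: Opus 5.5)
Your proof is correct, but it takes a different route from the paper's.

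\textbf{The paper's route.} The paper assembles the data into the linking groupoid $\mathcal L(P)=G\sqcup P\sqcup P^{\mathrm{op}}\sqcup H\rightrightarrows M\sqcup N$. It sets the $2$-form $\Omega$ equal to $\omega_G,\omega_P,-\omega_P,\omega_H$ on the four pieces. Isotropy of the action graphs, together with principality for the division maps, shows that $\Omega$ is multiplicative. The paper then quotes the general fact that a multiplicative $2$-form which is nondegenerate along the units is nondegenerate everywhere. Since the units lie in $G\sqcup H$, restricting to $P$ gives the claim.

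\textbf{Your route.} You compute $\ker\omega_P$ by hand. You use the two moment-map identities $\iota_{a_P}\omega_P=\pm J_G^*(\iota_a\omega_G|_{TM})$ and its $H$-counterpart, the isomorphisms $A_G\cong T^*M$ and $A_H\cong T^*N$, and $\ker dJ_H=\{a_P\}$.

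\textbf{How they relate.} Your argument is essentially the proof of that general fact, unwound at a point of $P\subset\mathcal L(P)$. There $\ker ds=\ker dJ_H$ and $\ker dt=\ker dJ_G$ are the $G$- and $H$-orbit directions. The standard identities expressing contractions of a multiplicative form with right- and left-translated algebroid vectors as $t$- and $s$-pullbacks become exactly your two moment-map identities.

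\textbf{What each buys.} Your route is self-contained. You do not need to build $\mathcal L(P)$, whose components have different dimensions, nor to check multiplicativity along the division maps, nor to rely on the unproved lemma about multiplicative forms. It also uses slightly less than biprincipality: principality of the $G$-action together with submersivity of $J_G$ is enough. The paper's route is shorter and more conceptual. It treats $G$ and $H$ symmetrically and packages the bibundle into a single symplectic groupoid, which suits the symplectic Morita equivalence theme of that section.
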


\begin{proof}
Consider the Lie groupoid
\[
\mathcal L(P)
   =G\sqcup P\sqcup P^{\mathrm{op}}\sqcup H
   \rightrightarrows M\sqcup N,
\]
whose multiplication is determined by the two actions and by the
division maps associated with the biprincipal structure, so we denote by $P^{\mathrm{op}}$ the bibundle $P$ but regarded as the inverse of $P$. Define
$\Omega\in\Omega^2(\mathcal L(P))$ by
\[
\left.\Omega\right|_G=\omega_G,\qquad
\left.\Omega\right|_P=\omega_P,\qquad
\left.\Omega\right|_{P^{\mathrm{op}}}=-\omega_P,\qquad
\left.\Omega\right|_H=\omega_H.
\]
The fact that the graphs of the actions on $P$ are isotropic implies that
$\Omega$ is multiplicative on $P$; whereas multiplicativity on $P^{\mathrm{op}}$ follows by inversion. Similarly, principality gives us that $\Omega$ is multiplicative with respect to the division maps $P\times_N P^{\mathrm{op}} \to G  $ and $ P^{\mathrm{op}}\times_M P \to H  $.

A multiplicative $2$-form is globally nondegenerate whenever it is
nondegenerate along the tangent bundle restricted to the unit submanifold. Consequently, $\Omega$ is
nondegenerate, and so is its restriction $\omega_P$ to $P$.
\end{proof}
\begin{theorem}\label{pro:symplectic 11}
    The 2-forms introduced above are symplectic and make \eqref{eq:11 from gpd fact} into a symplectic $(1,1)$-morphism.
\end{theorem}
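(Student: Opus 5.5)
The plan is to reduce everything to the single algebraic identity that the pulled-back cocycle $\omega_1+\omega_2$ on $K$ is trivialized by the $\sigma_U$. Concretely, each $\sigma_U$ satisfies
\[
\delta\sigma_U=\Phi_U^*\omega_2,\qquad d\sigma_U=\Phi_U^*\omega_1 .
\]
We also use that $\omega_2$ on $K_2$ satisfies $\delta\omega_2=0$ on $K_3$ and $d\omega_2=\delta\omega_1$.

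\textbf{Step 1: closedness.} Apply $d$ to \eqref{eq:2form}. The pieces $\pi_U^*d\sigma_U=\pi_U^*\Phi_U^*\omega_1$ combine into the alternating sum $\Phi_U^*\omega_1+\Phi_V^*\omega_1-\Phi_W^*\omega_1-\Phi_Z^*\omega_1$. The terms $q^*d\omega_2=q^*\delta\omega_1$ give $\omega_1(k_1)-\omega_1(k_1k_2)+\omega_1(k_2)$, evaluated once for the pair $(\Phi_U u,\Phi_V v)$ and once for $(\Phi_W w,\Phi_Z z)$. The defining relation $\Phi_U(u)\Phi_V(v)=\Phi_W(w)\Phi_Z(z)$ makes the product terms cancel, and the remaining terms cancel those from $d\sigma$. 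Signs must be fixed so that this works; the sign choices in \eqref{eq:2form} are designed for exactly that.

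\textbf{Step 2: compatibility (isotropic/Lagrangian graphs).} Every groupoid structure and action in \eqref{eq:11 from gpd fact} is componentwise multiplication in two of the four slots, with the other two slots shared. So I will check a single model computation. Take horizontal multiplication $\Join$ on $\X_{U,V}^{W,Z}$, which composes the $W$- and $V$-slots while matching $U$ and $Z$. On the graph, the $\sigma_W$ and $\sigma_V$ contributions produce $\delta\sigma_W=\Phi_W^*\omega_2$ and $\delta\sigma_V$. The $\sigma_U,\sigma_Z$ terms cancel between the factors. The $q^*\omega_2$ terms produce differences of $\omega_2$ at pairs with a common entry. The cocycle identity $\delta\omega_2=0$, applied to the triples $(\Phi u,\Phi v,\Phi v')$ and $(\Phi w,\Phi w',\Phi z)$ (or in the appropriate order), turns these into exactly the $\Phi^*\omega_2$ terms needed to cancel. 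The vertical multiplication and all the one-sided actions on the edge groupoids $A,B,C,D$ and on the corners are formally identical, with letters permuted. Here the edge groupoids carry $\sigma$-type forms in the matrix picture; for the corners, $\X_{U,V}^{V,U}$ is a slim double groupoid, and its form is precisely the one of \cite{Alvarez2023}. Hence the graphs are isotropic. Lagrangian then follows from a dimension count, once nondegeneracy is known.

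\textbf{Step 3: nondegeneracy, the main obstacle.} The corners are symplectic by \cite{Alvarez2023}. For the edges and the centre I will use Lemma~\ref{lem:nondegenerate-bimodule}. By Proposition~\ref{pro:11 gpd fac}, every bibundle in \eqref{eq:11 from gpd fact} is biprincipal, and Step 2 gives isotropic action graphs. So each edge space, viewed as a biprincipal bibundle between the two symplectic corner groupoids (say the horizontal groupoids $\X_{A,B}^{B,A}{}_{\bullet1}$ and $\X_{C,B}^{B,C}{}_{\bullet1}$ for the top edge), is symplectic. Then $\Omega=\X_{A,D}^{B,C}$, viewed as a biprincipal bibundle between the now-symplectic edge groupoids, is symplectic by the same lemma.

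The subtle point is that the lemma requires the acting groupoids to be symplectic groupoids. The edge spaces must therefore be considered as groupoids over $M$-type bases, with multiplicative, not merely closed, forms. I expect verifying the hypotheses in this order, corners, then edges, then centre, to be the delicate step. The fallback is to argue directly, via the groupoid $\mathcal L(P)$ of the lemma, that nondegeneracy along units propagates.
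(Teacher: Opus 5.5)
Your proposal is correct and follows the same route as the paper. Closedness and isotropy of all the action graphs come from direct computation, which the paper only asserts with a reference to \cite[Prop.~4.8]{Alvarez2023}; nondegeneracy then comes from \cite[Thm.~4.10]{Alvarez2023} for the corners, from Lemma~\ref{lem:nondegenerate-bimodule} for the edges, and from a second application of that lemma, over the now-symplectic edge groupoids, for the centre. One bookkeeping correction in your Step~2: for $x=(u,v,w,z)$ and $x'=(u',v',w',z')$ with $z=u'$, the $\omega_2$-terms cancel only if you apply the cocycle identity to $(\Phi u,\Phi v,\Phi v')$, to $(\Phi w,\Phi w',\Phi z')$, \emph{and} to the mixed triple $(\Phi w,\Phi z,\Phi v')$, together with the defining relations of $x$ and $x'$.
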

\begin{proof}
    It is immediate to check that \eqref{eq:2form} is closed. A similar calculation to the one in \cite[Prop. 4.8, (2)]{Alvarez2023} shows that the graphs of the corresponding (double) groupoid actions are isotropic.

By \cite[Theorem~4.10]{Alvarez2023}, the forms on the four corner double
groupoids are nondegenerate. Since the four
boundary bibundles are biprincipal, Lemma
\ref{lem:nondegenerate-bimodule} implies that their $2$-forms
are nondegenerate. The central space is likewise a biprincipal bibundle between the resulting symplectic boundary groupoids, so a second application of Lemma
\ref{lem:nondegenerate-bimodule} proves that its $2$-form is nondegenerate as well.

Thus, all the forms in \eqref{eq:2form} are symplectic and all the
actions are symplectic. Hence, \eqref{eq:11 from gpd fact} becomes a
symplectic $(1,1)$-morphism.
\end{proof}
\begin{example} A particular case of this construction arises when $E$ is an exact Courant algebroid and hence $K$ may be chosen as the pair groupoid over $M$ equipped with the forms $\omega_2=0$ and $\omega_1=t^*H-s^*H$, for $H\in \Omega^3(M)$ closed such that $[H]$ is the \v{S}evera class of $E$. The morphisms $\Phi_A,\Phi_B,\Phi_C$ and $\Phi_D$ are then the respective anchor morphisms consisting of the respective source and target combined. This is the relevant case for generalized K\"ahler geometry, and Theorem \ref{pro:symplectic 11} recovers (part of) \cite[Prop. 3.7]{AGJ2024} when the relevant double Lie groupoids are of the form \eqref{eq:slim} for $K$ the pair groupoid over $M$.   
\end{example}

As shown in \cite{Alvarez2023,MehtaTang2011}, $\Wbar$ takes symplectic double groupoids to 2-shifted symplectic (local) Lie 2-groupoids. In our case, $\Wbar$ produces 2-shifted symplectic 2-groupoids, integrating the background Courant algebroids. Since \eqref{eq:11 from gpd fact} consists of Morita equivalences, these Lie 2-groupoids are related by hypercovers, and so we have that integrating different choices of Manin triples in the same transitive Courant algebroid by the method proposed in \cite[Thm. 4.10]{Alvarez2023} produces 2-shifted symplectic Lie 2-groupoids, which are equivalent by means of the anafunctors on the boundary of the following diagram: 
\[\begin{tikzcd}
	\left(\overline{W}\X_{A,B}^{B,A}, \overline{W}^*\omega_{A,B}^{B,A}\right) & \left(\overline{W}\widetilde{\X}_{A,B}^{B,C}, \overline{W}^*\widetilde{\omega}_{A,B}^{B,C}\right) & \left(\overline{W}\X_{C,B}^{B,C}, \overline{W}^*\omega_{C,B}^{B,C}\right) \\
	\left(\overline{W}\widetilde{\X}_{A,D}^{B,A}, \overline{W}^*\widetilde{\omega}_{A,D}^{B,A}\right) & \left(\overline{W}\widetilde{\widetilde{\X}}_{A,D}^{B,C}, \overline{W}^*\widetilde{\widetilde{\omega}}_{A,D}^{B,C}\right) & \left(\overline{W}\widetilde{\X}_{C,D}^{B,C}, \overline{W}^*\widetilde{\omega}_{C,D}^{B,C}\right) \\
	\left(\overline{W}\X_{A,D}^{D,A}, \overline{W}^*\omega_{A,D}^{D,A}\right) & \left(\overline{W}\widetilde{\X}_{A,D}^{D,C}, \overline{W}^*\widetilde{\omega}_{A,D}^{D,C}\right) & \left(\overline{W}\X_{C,D}^{D,C}, \overline{W}^*\omega_{C,D}^{D,C}\right),
	\arrow["\sim", two heads, from=1-2, to=1-1]
	\arrow["\sim", two heads, from=1-2, to=1-3]
	\arrow["\sim", two heads, from=2-1, to=1-1]
	\arrow["\sim", two heads, from=2-1, to=3-1]
	\arrow["\sim", two heads, from=2-2, to=1-2]
	\arrow["\sim", two heads, from=2-2, to=2-1]
	\arrow["\sim", two heads, from=2-2, to=2-3]
	\arrow["\sim", two heads, from=2-2, to=3-2]
	\arrow["\sim", two heads, from=2-3, to=1-3]
	\arrow["\sim", two heads, from=2-3, to=3-3]
	\arrow["\sim", two heads, from=3-2, to=3-1]
	\arrow["\sim", two heads, from=3-2, to=3-3]
\end{tikzcd}\]
where the 2-forms therein are simply obtained by pulling back the corresponding symplectic forms given by \eqref{eq:2form}. As a consequence of \cite[Lemma 4.20]{Alvarez2023}, each of the anafunctors on the boundary of this diagram is a symplectic Morita equivalence as in \cite{CuecaZhu2023}. Since the corners of this diagram integrate the background Courant algebroid $E$, this diagram illustrates the way in which such integrations depend on the choice of the underlying Manin triple.
\begin{corollary} The 2-shifted symplectic Lie 2-groupoids $\overline{W}\X_{A,B}^{B,A}$, $\overline{W}\X_{C,B}^{B,C}$, $\overline{W}\X_{A,D}^{D,A}$ and $\overline{W}\X_{C,D}^{D,C}$ are symplectically Morita equivalent. \qed   
\end{corollary}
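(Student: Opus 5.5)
The plan is to combine Proposition~\ref{pro:11 gpd fac}, Theorem~\ref{pro:symplectic 11} and the functoriality of $\Wbar$ with respect to the biaction constructions (Corollary~\ref{cor:double-to-simplicial-double-bicats}). First, I would check the fullness hypotheses needed to apply $\Wbar$. The four corner double Lie groupoids $\X_{A,B}^{B,A}$, $\X_{C,B}^{B,C}$, $\X_{A,D}^{D,A}$, $\X_{C,D}^{D,C}$ are slim. Under the standing groupoid factorization assumptions they are full, so by the Mehta--Tang result recalled above each $\Wbar$ of a corner is a Lie 2-groupoid. By \cite{Alvarez2023,MehtaTang2011} the pulled-back forms make them 2-shifted symplectic.

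Second, by Proposition~\ref{pro:11 gpd fac} every edge bibundle in \eqref{eq:11 from gpd fact} is biprincipal. Take the horizontal edge $\X_{A,B}^{B,C}$ between $\X_{A,B}^{B,A}$ and $\X_{C,B}^{B,C}$. Since both corners are full, Lemma~\ref{lem:HorGenMorphToHorAna} and Lemma~\ref{lem:fullness-implies-extra-cond-horizontal} apply to the action of either side, because biprincipality lets us regard each side as the principal one. So both legs of the biaction span
\[
\X_{A,B}^{B,A}\leftarrow \widetilde{\X}_{A,B}^{B,C}\rightarrow \X_{C,B}^{B,C}
\]
are full horizontal levelwise hypercovers. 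Lemma~\ref{lem:Wbar-horizontal-hypercover} then shows that both legs become hypercovers after applying $\Wbar$. The same argument, using Lemma~\ref{lem:VerGenMorphToVerAna}, Lemma~\ref{lem:fullness-implies-extra-cond-vertical} and Lemma~\ref{lem:Wbar-vertical-hypercover}, handles the vertical edges. Hence each boundary span of the displayed $\Wbar$-diagram is a Morita equivalence of Lie 2-groupoids. Chaining the four boundary spans relates all four corners, and since Morita equivalence is an equivalence relation this proves the underlying Morita statement.

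Third, for the symplectic refinement I would use Theorem~\ref{pro:symplectic 11}. The edge spaces carry symplectic forms making all actions symplectic, and the biaction double groupoids carry the induced forms $\widetilde{\omega}$. By \cite[Lemma 4.20]{Alvarez2023}, as already invoked in the text preceding the statement, each boundary anafunctor with these forms is a symplectic Morita equivalence in the sense of \cite{CuecaZhu2023}. Concretely, the pullbacks of the corner 2-shifted forms to $\Wbar\widetilde{\X}$ should agree up to a $\delta$-exact correction given by the edge form. Composing these symplectic Morita equivalences around the square then finishes the proof.

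The main obstacle is this last compatibility. One has to verify that the forms $\widetilde{\omega}$ on the biaction double groupoids, pulled back through $\Wbar$, satisfy the exact hypotheses of \cite[Lemma 4.20]{Alvarez2023} and of the Cueca--Zhu definition. The first thing I would try is the following: show that the isotropy of the action graphs from Theorem~\ref{pro:symplectic 11} is exactly the statement that $\widetilde{\omega}$ is $\delta$-closed relative to the two legs, and then use that $\Wbar$ commutes with the Bott--Shulman--Stasheff differentials.
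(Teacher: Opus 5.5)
Your proposal is correct and follows essentially the same route as the paper, which justifies the corollary by the preceding discussion. There, Proposition~\ref{pro:11 gpd fac} and Theorem~\ref{pro:symplectic 11} give a symplectic $(1,1)$-morphism whose edges are Morita equivalences, $\Wbar$ of the biaction spans yields hypercovers, and \cite[Lemma 4.20]{Alvarez2023} is cited to conclude that each boundary anafunctor is a symplectic Morita equivalence in the sense of \cite{CuecaZhu2023}. The compatibility you flag as the main obstacle is exactly what the paper delegates to that lemma, and your explicit check that both legs of each biaction span are full levelwise hypercovers (using biprincipality and fullness of both corners) spells out what the paper leaves implicit.
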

Further aspects of this diagram in the context of shifted symplectic geometry shall be studied in future work.
\begin{remark}[Double groupoid extensions and cocycles] As shown in \cite{AndruskiewitschNatale2009}, every discrete double groupoid is an extension of a slim double groupoid by a bundle of abelian groups and these extensions are described by certain double cocycles. For example, let $T$ be an abelian Lie group and suppose that $\gamma_v:(\X_{A,B}^{B,A})_{1,2}\to T$ and $\gamma_h:(\X_{A,B}^{B,A})_{2,1}\to T$ are normalized 2-cocycles satisfying
\[ \delta^v\gamma_h=\delta^h\gamma_v \]
where $\delta^v,\delta^h$ are the corresponding vertical and horizontal simplicial differentials on the nerve of $\X_{A,B}^{B,A}$. Then we may extend $\X_{A,B}^{B,A}$ by $T$ as follows: we have that $\widehat{\X_{A,B}^{B,A}}=\X_{A,B}^{B,A}\times T$ is a double Lie groupoid equipped with the two multiplications respectively deformed by $\gamma_h,\gamma_v$. If $\X_{A,B}^{B,A}$ sits inside a $(1,1)$-morphism as in \eqref{eq:11 from gpd fact}, we may then convert it into a 2-morphism of anafunctors of Lie 2-groupoids using Theorem \ref{thm:1-1-morph-to-2-hom-diamond} and use the Morita invariance of cohomology to transfer the class of $[\gamma_h+\gamma_v]$ across the boundary of the 2-morphism, determining now 2-groupoid extensions. The relevance for prequantization of this fact and the promotion of \eqref{eq:11 from gpd fact} to double groupoid extensions shall appear in future work.\end{remark}

\begingroup
\sloppy
\printbibliography
\endgroup

\end{document}